\documentclass[11pt]{article}
\usepackage[titletoc]{appendix}
\usepackage[labelsep=period]{caption}
\usepackage{bm}
\usepackage{fullpage}
\usepackage{epsfig}
\usepackage{graphics}
\usepackage{latexsym}
\usepackage{amsmath}
\usepackage{amsfonts}
\usepackage{amssymb}
\usepackage{mathrsfs}
\usepackage{pifont}
\usepackage{yhmath}

\usepackage{enumitem}
\usepackage{subcaption}
\usepackage{float}
\usepackage{mathtools}

\usepackage{bbm}
\usepackage{dsfont}
\usepackage{eufrak}
\usepackage{wasysym}
\usepackage{underscore}
\usepackage{epstopdf}
\usepackage{fontenc}
\usepackage{amsthm}
\newtheorem{theorem}{Theorem}[section]

\newtheorem{corollary}[theorem]{Corollary}

\newtheorem{lemma}[theorem]{Lemma}

\def\bfm#1{\mbox{\boldmath$#1$}}
\def\qed{\hfill \rule{4pt}{7pt}}
\usepackage{fullpage,graphicx}
\newcommand{\de}{\backslash}
\DeclareMathAlphabet{\mathpzc}{OT1}{pzc}{m}{it}
\title{\bf Tournament Ranking: Duality and Efficiency}
\author{\vspace{2mm}  Ge Song$^{a}$ \quad Mengxi Yang$^{b}$\thanks{Corresponding author. E-mail: yangmx221b@outlook.com.}
\quad Wenan Zang$^{c}$\thanks{Supported in part by the Research Grants Council of Hong Kong.}\\
$\stackrel{a}{}$ School of Public Policy and Management\\ University of Science and Technology of China\\Hefei 230026, China \smallskip\\
$\stackrel{b}{}$ School of Mathematical Sciences\\ University of Science and Technology of China\\Hefei 230026, China \smallskip\\
$\stackrel{c}{}$ Department of Mathematics\\University of Hong Kong \\ Hong Kong, China}

\begin{document}
\date{}
\maketitle

\begin{abstract}
The feedback arc set problem on tournaments arises in a rich variety of applications, and has been studied extensively in 
several research fields over the past six decades. It is well known that this problem is $NP$-hard and admits a polynomial-time 
approximation scheme (PTAS) in general. A tournament $T=(V, A)$ is called {\em cycle Mengerian} (CM) if, for every nonnegative 
integral weight function defined on $A$, the minimum total weight of a feedback arc set is equal to the maximum size of a cycle packing. 
In 2020 Chen et al. obtained a structural characterization of all CM tournaments; however, their proof is not algorithmic 
in nature. In this paper we present combinatorial polynomial-time algorithms for finding both minimum feedback arc 
sets and maximum cycle packings in arc-weighted CM tournaments.

\vskip 4mm

\noindent {\bf MSC 2000 subject classification.} Primary: 90C10, 90C27, 90C57.

\noindent {\bf OR/MS subject classification.} Primary: Programming/graphs.

\noindent {\bf Key words.} Tournament, feedback arc set, cycle packing, algorithm, complexity.

\end{abstract}

\newpage

\section{Introduction}

We address the classical problem of ranking a set of players on the basis of a set of pairwise comparisons arising from 
a sports tournament, with the objective of minimizing the total number of upsets, where an {\em upset} occurs if a higher 
ranked player was actually defeated by a lower ranked player. This problem can be rephrased as the so-called minimum 
feedback arc set problem on tournaments, and will be investigated in the more general weighted setting in this paper.  

Let $G=(V,A)$ be a digraph with a nonnegative integral weight $w(e)$ on each arc $e$.  A subset $F$ of arcs
is called a {\it feedback arc set} (FAS) of $G$ if $G\de F$ contains no cycles (directed). The {\em FAS problem}
is to find an FAS in $G$ with minimum total weight. One approach to this $NP$-hard problem is to formulate it as an integer 
program, consider its linear programming relaxation, and explore the integrality and duality properties satisfied by its 
constraint. Let ${\cal C}$ be the family of all cycles (directed) in $G$ and let $M$ be the $\cal C$-$A$ incidence matrix 
of $G$. Write
\begin{equation*}
\begin{aligned}
\tau_w(G) & := \min\{ w^T x: \ x \in {\mathbb Z}_+^A, \ M x \ge \bfm 1\},\\
\tau_w^*(G) & := \min\{w^T x: \ x \in {\mathbb R}_+^A, \ M x \ge\bfm 1\},\\ 
\nu_w(G) & := \max\{ y^T \bfm 1: \ y\in {\mathbb Z}_+^{\cal C}, \  y^TM\le w^T\},\\ 
\nu_w^*(G) & :=\max\{y^T \bfm 1: \ y\in {\mathbb R}_+^{\cal C}, \  y^TM\le w^T\}. 
\end{aligned}
\end{equation*}
\noindent (As usual, $\mathbb R_+$ and $\mathbb Z_+$ are the sets of nonnegative real numbers and nonnegative integers, respectively.) 
Combinatorially, $\tau_w(G)$ is the minimum total weight of an FAS in $G$. Each vector $y\in \mathbb Z_+^{\cal C}$ with $y^TM\le w^T$ can 
be interpreted as a collection $\cal Q$ of cycles (with repetition allowed) of $G$, such that each arc $e$ belongs to at most $w(e)$ members 
of $\cal Q$; such a collection is called a {\em cycle packing} of $G$. Thus $\nu_w(G)$ is the maximum size of a cycle packing of $G$, and 
naturally the corresponding maximization problem is called the {\em cycle packing problem}, which is the dual of the FAS problem. Observe that
\begin{equation}\label{eq:ineq}
\nu_w(G) \le \nu_w^*(G) = \tau_w^*(G) \le \tau_w(G),
\end{equation}
where the equality follows from the LP Duality Theorem. Seymour \cite{sey95} proved that in the unweighted case the supremum of $\tau_w(G)/\tau_w^*(G)$ is 
$O(\log \tau_w^*(G) \log\log \tau_w^*(G))$; his proof has been adapted by Even et al. \cite{ENSS} to get the best known approximation algorithm for the 
(weighted) FAS problem, with ratio $O(\log |V| \log\log |V|)$. We call $G$ {\it cycle ideal} (CI) if $\tau_w^*(G)=\tau_w(G)$ for all $w\in \mathbb Z_+^A$, 
and call $G$ {\it cycle Mengerian} (CM) if $\nu_w^*(G)=\nu_w(G)$ for all $w\in \mathbb Z_+^A$. By the Edmonds-Giles theorem \cite{eg77}, being 
CM is equivalent to saying that $\nu_w(G) = \tau_w(G)$ for all $w\in \mathbb Z_+^A$. Therefore, in view of (\ref{eq:ineq}), every CM digraph is CI.
We point out that characterizations of CI and CM digraphs can yield not only beautiful mathematical theorems but also a polynomial-time
solution of the FAS problem on such digraphs using the ellipsoid method, by a general theorem of Gr\"{o}tschel, Lov\'asz, and Schrijver \cite{GLS}.
Initiated in the early 1960s \cite{Y}, the study of CI and CM digraphs has inspired many min-max theorems in combinatorial optimization, such 
as Lucchesi and Younger \cite{LY}, Seymour \cite{sey77,sey96}, Guenin \cite{G,G2}, Geelen and Guenin \cite{GG},  Guenin and Thomas \cite{GT}, 
Chen et al. \cite{CDHZ,CDZZ1,CDZZ2,CDZZ3}, Ding, Feng, and Zang \cite{DFZ}, and Ding, Xu, and Zang \cite{DXZ,DZ02}. Despite tremendous research efforts, only 
some special classes of CI and CM digraphs \cite{ACM,BFM,CDZ,CDZZ1,CDZZ2,G,GT,LY,sey96} have been identified to date, and complete characterizations 
seem extremely hard to obtain.

A digraph $G$ is called a {\em tournament} if there is precisely one arc between any two vertices in $G$. The FAS problem remains $NP$-hard 
even when the input digraph $G$ is a tournament; see Alon \cite{Alon} and Charbit, Thomass\'e, and Yeo \cite{CTY}. As this special version also 
arises in a rich variety of applications, it has been studied extensively from the combinatorial \cite{EM,Fu,S,Y}, statistical \cite{Sl}, and algorithmic 
\cite{ACN,ACK,CFR,KS,vanW} points of view, and thus has produced a vast body of literature.  Mathieu and Schudy \cite{KS} devised a polynomial 
time approximation scheme (PTAS) for the FAS problem on tournaments.  Bessy et al. \cite{Bessy} showed that the problem of determining if a tournament 
has a cycle packing and a feedback arc set of the same size is $NP$-complete, and the problem of packing arc-disjoint cycles in tournaments is fixed-parameter 
tractable. Applegate, Cook, and McCormick \cite{ACM} and Barahona, Fonlupt, and Mahjoub \cite{BFM} independently proved that every tournament with five 
vertices is CM, thereby confirming a conjecture posed by both Barahona and Mahjoub \cite{BM} and J\"{u}nger \cite{J}. In \cite{CDZZ1,CDZZ2}, Chen et al.
proposed to call a tournament {\em M\"{o}bius-free} if it contains none of $K_{3, 3}$, $K_{3, 3}'$, $M_5$, and $M_5^*$ depicted in Figure 1 as a subgraph, and 
obtained the following characterization of all CI and CM tournaments.
\begin{figure}[htpb]
\vspace{-1mm}
\centerline{\includegraphics[width=10cm]{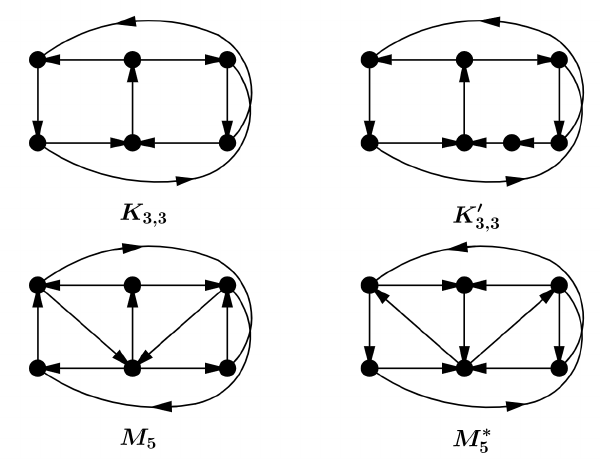}}
\vspace{-1mm}\caption{Forbidden Structures}\label{mladders}
\end{figure}
\begin{theorem} \label{Theorem1} {\rm (Chen et al. \cite{CDZZ1,CDZZ2})}
For a tournament $T$, the following statements are equivalent:
\begin{itemize}
\vspace{-1mm}
\item[(i)] $T$ is M\"{o}bius-free;
\vspace{-2mm}
\item[(ii)] $T$ is cycle ideal; and
\vspace{-2mm}
\item[(iii)] $T$ is cycle Mengerian.
\end{itemize}
\end{theorem}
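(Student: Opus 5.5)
The proof runs around the cycle of implications (iii)$\Rightarrow$(ii)$\Rightarrow$(i)$\Rightarrow$(iii). The first step is already recorded in the introduction: by the Edmonds--Giles theorem, CM means $\nu_w(T)=\tau_w(T)$ for all $w$. Combined with (\ref{eq:ineq}), this forces $\tau^*_w(T)=\tau_w(T)$, so every CM tournament is CI.

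For (ii)$\Rightarrow$(i), I argue by contraposition. Suppose $T$ contains one of $K_{3,3}$, $K_{3,3}'$, $M_5$, $M_5^*$ as a subgraph $H$. Define $w$ to be $1$ on the arcs of $H$ and a large value $N$ (say $|A|+1$) on all other arcs. Since a fractional cover of weight less than $N$ can only use arcs of $H$, the cycles that matter are those using only arcs of $H$ or arcs of weight $N$ sparingly. The main task is to exhibit a fractional feedback arc set of value strictly below the integral optimum. These are Möbius-ladder-type configurations: the cycles of $H$ (and the relevant triangles of $T$ inside $V(H)$) form an odd-circulant-like hypergraph. Assigning $1/2$ to the appropriate arcs gives $\tau^*_w\le k/2$ for some odd $k$, while a parity/case check shows every integral FAS has weight at least $(k+1)/2$. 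Because $T$ is a tournament, the remaining arcs inside $V(H)$ are forced, and each such configuration yields a finite number of cases to check. So this direction reduces to a finite verification.

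The substance is (i)$\Rightarrow$(iii): every Möbius-free tournament is CM. I plan to argue by minimal counterexample. Take a Möbius-free tournament $T$ and a weight $w$ with $\nu_w(T)<\tau_w(T)$, choosing $|V|$ and then $w(A)$ minimal. The structural step is a decomposition theorem for Möbius-free tournaments. I expect to show that such a $T$ either has at most five vertices (handled by the Applegate--Cook--McCormick / Barahona--Fonlupt--Mahjoub result that five-vertex tournaments are CM), or admits a vertex/arc cut that splits it into smaller Möbius-free pieces, or has a special "cyclic" layered structure whose cycle hypergraph has a totally unimodular-like or ideal-clutter description. For the cut case, I would prove a composition lemma: if $T$ arises by gluing $T_1$ and $T_2$ along a small structure (e.g.\ a 1- or 2-sum-type operation on the cycle hypergraph), then CM is preserved. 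This uses minimality to get optimal packings in each piece with prescribed usage on the interface arcs, and then splices them together. By standard uncrossing, I may assume an optimal fractional packing has laminar- or cross-free support, which lets me round.

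The main obstacle is the structural decomposition of Möbius-free tournaments together with making the gluing preserve Mengerianity. The packings in the two pieces must agree on how often interface arcs are used. This calls for a "$w$-reduction" argument: choose $w$ minimal, show every arc is tight in every optimal fractional packing, decrease $w$ on a suitable arc, and derive a contradiction from integrality in the smaller instance. I would attack this first by showing that a minimal counterexample is strongly connected and has no arc of weight $0$ or $\ge 2$ beyond what can be split off, and then by locating a $K_{3,3}$-type obstruction whenever no decomposition exists.
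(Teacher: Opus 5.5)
Your step (iii)$\Rightarrow$(ii) is correct. The substantive direction (i)$\Rightarrow$(iii), however, is not proved: the decomposition theorem, the gluing lemma and the rounding step are only announced, and as formulated they do not work. The paper does not prove Theorem~\ref{Theorem1}; it imports it from Chen et al. There, sufficiency rests on the full structural description of M\"{o}bius-free tournaments: a chain theorem for internally 2-strong tournaments, the small families $\mathcal{T}_0$ and $\mathcal{T}_1$, the exceptional $F_1$ and $G_1$, and the global structure of Theorem~\ref{structure}. A long minimax argument tailored to that structure follows.

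Your specific steps have these problems:
\begin{itemize}
\item Your base case ``at most five vertices'' does not cover the irreducible pieces that actually arise. $\mathcal{T}_0$ has members on up to six vertices, and $F_1$, $G_1$ needed their own proof of Mengerianity.
\item Gluing at a single vertex preserves CM trivially, but it never occurs in a tournament, since every two vertices are adjacent. In the gluing that does occur (a vertex $v$ and a dicut $(X,Y)$ of $T\backslash v$), the arcs from $X$ to $Y$ appear in $T/Y$ and $T/X$ only in aggregated form. So optimal packings of the two pieces must be disaggregated consistently over all $|X||Y|$ interface arcs. This is exactly the difficulty you flag and leave open.
\item An ideal-clutter description would only yield CI, not CM. Total unimodularity is not available even for M\"{o}bius-free tournaments. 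Take the regular tournament on $\mathbb{Z}_5$ with arcs $i\to i+1$ and $i\to i+2$: its five triangles $\{i,i+1,i+3\}$, set against the five arcs $i\to i+2$, give an odd-cycle submatrix of determinant $\pm 2$.
\item ``Standard uncrossing'' is a tool for cut-like families such as dicuts. Two cycles through a common vertex recombine only into closed walks. The exchange arguments in Lemmas~\ref{lem:packtriangle} and~\ref{lem:proper packing} work only because of the block structure.
\item You cannot borrow Section~4 to fill the gap, since Lemma~\ref{lem: saturated arcs} invokes Theorem~\ref{Theorem1} as its optimality criterion, so that route is circular.
\end{itemize}

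In (ii)$\Rightarrow$(i) your weighting is the wrong one. A huge weight $N$ on the arcs outside $H$ contracts them rather than deleting them. Any cycle of $T$ avoiding $A(H)$ (e.g.\ inside $T\backslash V(H)$) forces both $\tau_w(T)$ and $\tau^*_w(T)$ to be at least $N$. Every cycle that mixes arcs of $H$ with heavy arcs must be hit inside $H$, which can close the integrality gap of $H$. So nothing reduces to a finite check. Your remark that the arcs inside $V(H)$ are ``forced'' is also false, because $H$ need only be a subgraph; for $K_{3,3}$ the six pairs inside the colour classes are unconstrained.

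The fix is to put weight $0$ outside $A(H)$. Every cycle of $H$ is a cycle of $T$, and any (fractional) FAS of $H$ extends at no cost by adding all arcs outside $A(H)$. Hence $\tau_w(T)=\tau_w(H)$ and $\tau^*_w(T)=\tau^*_w(H)$. So CI is closed under taking subgraphs, and it suffices to exhibit, for each of the four digraphs $H$ alone, a weight with $\tau^*<\tau$.
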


Their proof, however, is not algorithmic in nature. The purpose of this paper is to present combinatorial polynomial-time algorithms for 
solving both the FAS problem and the cycle packing problem on M\"{o}bius-free tournaments exactly. Note that cycle packing is also a fundamental
problem in graph theory and algorithm design, with important applications in several fields. Hence it is also a subject of extensive 
research; see, for instance, Bessy et al. \cite{Bessy}, Caprara et al. \cite{CPR}, and Krivelevich et al. \cite{Kri}.  

\begin{theorem}\label{thm:main}
Let $T=(V,A)$ be a M\"{o}bius-free tournament with a nonnegative integral weight $w(e)$ on each arc $e$. Then  a minimum FAS in $(T, w)$ can be 
found in $O(n^9)$ time and a maximum cycle packing can be found in $O(n^7)$ time, where $n=|V|$.
\end{theorem}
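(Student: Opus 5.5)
We will use the structural description of M\"obius-free tournaments behind Theorem~\ref{Theorem1} in the work of Chen et al.~\cite{CDZZ1,CDZZ2}, and turn it into a recursive decomposition whose pieces can be solved directly. The plan has three stages: decompose, solve the leaves, and glue solutions back together.

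\emph{Decomposition.} First we reduce to strongly connected $T$. Every cycle lies inside one strong component, so arcs between components can be discarded, and the problem splits into independent subproblems on the components. For a strong M\"obius-free tournament, we expect the structure theory to supply one of two things.
\begin{itemize}
\item[(a)] A small \emph{separation}: a vertex or arc set whose removal leaves pieces that interact only through a bounded interface, in the spirit of 2-sums or 3-sums. The problem on $T$ can then be written as problems on two smaller tournaments, one of them carrying a gadget with adjusted weights that encodes the interface.
\item[(b)] $T$ is one of a few basic classes, such as tournaments of bounded size (including the 5-vertex tournaments, which are CM by \cite{ACM,BFM}) or ``circular'' tournaments with a cyclic ordering.
\end{itemize}
Such a separation can be found by brute force over candidate vertex subsets of bounded size. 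That costs $O(n^3)$ or $O(n^4)$ candidates, each tested in polynomial time, which is where the $n^9$ bound should come from.

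\emph{Basic classes.} On a basic class we solve the FAS problem combinatorially. In a circular tournament, a minimum FAS is obtained by choosing a cut point of the cyclic order and taking the backward arcs, so it suffices to try all $O(n)$ orderings. For cycle packing on basic classes, the natural approach is to reduce to a max-flow or min-cost flow computation in an auxiliary network whose paths correspond to triangles or cycles. Equality $\nu_w=\tau_w$ from Theorem~\ref{Theorem1} certifies optimality whenever the two values computed agree.

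\emph{Gluing.} Suppose $T$ is split along a separation into $T_1$ and $T_2$. We compute, for each of the boundedly many possible ``interface states'', the optimum on $T_2$. These values are then encoded as weights on gadget arcs in $T_1$, which is valid because the gadget tournament remains M\"obius-free and hence still CM. For packings, an optimal packing in $T_1$ uses the gadget arcs a certain number of times; we expand these uses into cycles of $T_2$ by solving a packing problem on $T_2$ with prescribed demands on the interface arcs. The main obstacle is this step. We must show that the demand-constrained packing in $T_2$ is again an instance of a CM-type problem, so that an integral solution exists, and that it can be found, for instance via flow. We also need the recursion depth and sizes to keep the total within $O(n^7)$ for packing and $O(n^9)$ for FAS. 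If the constrained problem does not stay within the class, we would first try a primal--dual uncrossing argument on the packing to force it into a form compatible with the separation.
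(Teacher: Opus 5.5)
Your outline is a generic decompose--solve--glue template, and it does not prove the theorem. The two places where real content is needed are either guessed or explicitly left open.

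\textbf{The decomposition is never pinned down.} The structure theorem of Chen et al.\ (Theorem~\ref{structure}) says concretely that a strong M\"obius-free tournament other than $F_1,G_1$ is a linear chain of vertical blocks $A_i$ and horizontal blocks $B_i$ ($|B_i|\le 4$), glued at join vertices $v_i$, with every undrawn arc directed left to right. It is this specific shape that makes the algorithms work: every cycle returns leftward only through the small blocks $B_i$, passing each intermediate join vertex in order. Because your decomposition is hypothetical, the claims built on it are unsupported:
\begin{itemize}
\item nothing shows that a gadget-augmented $T_1$ is still a M\"obius-free tournament, and hence still CM;
\item the rotation-cut FAS recipe for ``circular'' pieces has no counterpart in the actual pieces;
\item the accounting ``$O(n^3)$ or $O(n^4)$ candidates \ldots\ which is where the $n^9$ bound should come from'' is a guess, not a derivation.
\end{itemize}

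\textbf{The main obstacle is left open.} The step you yourself call the main obstacle---expanding gadget usage into an integral packing of $T_2$ with prescribed interface demands---is the whole difficulty, since cycle packing is a multicommodity-flow-type problem. You give no way to carry it out. The paper resolves it by using $\nu_w=\tau_w$ constructively:
\begin{itemize}
\item Triangles inside each $A_i\setminus v_i$ are packed greedily, which reduces packing to ${\cal G}^*$ (Lemmas~\ref{lem:packtriangle} and~\ref{thm:notriangle packing}).
\item On ${\cal G}^*$, a minimum FAS is found in $O(n^5)$ by dynamic programming over horizontal cuts combined with a min-cut in an auxiliary network (Theorem~\ref{FAS}).
\item Running this once per arc, each $w(e)$ is replaced by $\tau_w(T)-\tau_c(T)$, where $c$ is $w$ with $w(e)$ set to $0$. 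Afterwards every positive-weight arc lies in a minimum FAS (Lemma~\ref{preprocessing}, $O(n^7)$ in total).
\item By the CM property, this forces every maximum packing to saturate every arc (Lemma~\ref{lem: saturated arcs}). An uncrossing argument then fixes exactly how much of each arc of $B_i$ is consumed by lower segments (Lemma~\ref{lem:proper packing}).
\item With those capacities, a single-commodity max-flow yields the lower segments. Local $v_{i+1}$-$v_i$ flows in the $B_i$ then supply matching upper segments.
\item A minimum FAS for general $T$ comes from self-reducibility. For each of the $O(n^2)$ arcs, one $O(n^7)$ packing computation tests whether zeroing its weight drops $\tau_w=\nu_w$ by exactly that weight. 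This is where the $O(n^9)$ bound comes from.
\end{itemize}
None of these ingredients appears in your proposal: saturation via preprocessing, the exact counts $y_i^j$, the lower/upper segment split, or self-reducibility through the CM equality.
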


The remainder of this paper is organized as follows. In Section 2, we describe the global structure of M\"{o}bius-free  
tournaments obtained by Chen et al. \cite{CDZZ3}, and reduce the cycle packing problem to that on a restricted class of 
M\"{o}bius-free tournaments. In Section 3, we propose a combinatorial polynomial-time algorithm for the FAS problem on this 
restricted class, using a dynamic programming approach. In Section 4, we devise a combinatorial polynomial-time algorithm for 
the cycle packing problem on this restricted class. In Section 5, we design combinatorial polynomial-time algorithms for
both the FAS problem and a cycle packing problem on all M\"{o}bius-free tournaments.  

\section{Preliminaries}

Our algorithms are built upon the following structure theorem about M\"{o}bius-free tournaments due to Chen et al. \cite{CDZZ3}. Recall that a 
digraph is called {\em strongly connected} or simply {\em strong} if each vertex is reachable from any other vertex. 

\begin{figure}[htpb]
\vspace{-1mm}
\centerline{\includegraphics[width=7cm]{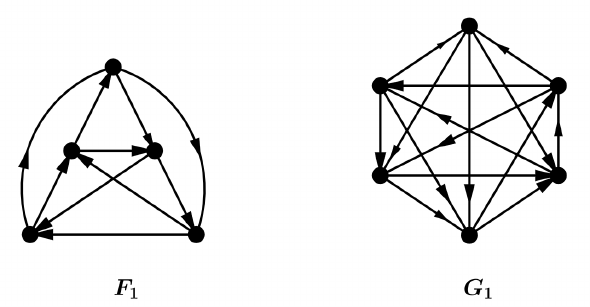}}
\vspace{-1mm}\caption{$F_1$ and $G_1$}\label{F1G1}
\end{figure}

\begin{theorem}\label{structure} {\rm (Chen et al. \cite{CDZZ3})}
Let $T=(V, A)$ be a strong tournament other than $F_1$ and $G_1$ (see Figure 2). Then $T$ is M\"obius-free if and only if it satisfies the description shown in Figure 3,
where $m\ge1$, undirected/dotted edges in the figure can be directed arbitrarily, and all other arcs (that are not drawn) are directed from ``left'' to ``right''. 
Furthermore, $v_1$ has an out-neighbor in the leftmost building block of $A_1$, and $v_m$ has an in-neighbor in the rightmost building block of $A_m$.
\end{theorem}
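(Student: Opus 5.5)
The plan is to prove the two directions separately. The ``if'' direction (every tournament of the form in Figure 3 is M\"obius-free) is a direct verification, while the ``only if'' direction (every strong M\"obius-free tournament other than $F_1,G_1$ has this form) will be proved by induction on $|V|$. We will also use that being M\"obius-free is inherited by subtournaments, since $K_{3,3}$, $K_{3,3}'$, $M_5$, $M_5^*$ are forbidden as subgraphs.

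\emph{Sufficiency.} Suppose $T$ has the shape of Figure 3. I would show that none of $K_{3,3}$, $K_{3,3}'$, $M_5$, $M_5^*$ embeds in $T$. The key feature of Figure 3 is that every arc not drawn goes from left to right. Hence every cycle of $T$ is confined to a bounded window: either it lies inside one building block of some $A_i$, or it passes through $v_i$ together with consecutive blocks adjacent to it. Each forbidden configuration is a ``twisted'' union of several cycles that pairwise interlace. I would fix a left-to-right linear order of the vertices consistent with Figure 3 and record the backward arcs. Then I would check that any four or five vertices spanning a forbidden graph would need backward arcs that cross in a way the figure forbids, namely two backward arcs in different blocks sharing no $v_i$, or a backward arc jumping over some $v_i$. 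The dotted/undirected edges are confined to pairs inside a single small block, so the verification is a finite case check over block types, carried out independently of their orientations.

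\emph{Necessity.} Let $T$ be strong and M\"obius-free, not $F_1,G_1$, and argue by induction on $n=|V|$; small $n$ (say $n\le 5$) can be checked directly. For the inductive step, choose a vertex $x$ such that $T-x$ is strong; such a vertex exists in every strong tournament with $n\ge 4$, by Moon's vertex-pancyclicity or a standard ear argument. If $T-x$ is $F_1$ or $G_1$, handle it as a finite base case: enumerate the one-vertex extensions that are strong and M\"obius-free, and check that each fits Figure 3 or equals $F_1$ or $G_1$. Otherwise $T-x$ has the structure of Figure 3 with vertices $v_1,\dots,v_m$ and blocks $A_1,\dots,A_m$. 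Then we locate $x$ from its in-neighbourhood $N^-(x)$ and out-neighbourhood $N^+(x)$ relative to the left-to-right order. The main claim is that $N^-(x)$ is essentially an initial segment and $N^+(x)$ a final segment of the order, up to local exceptions inside one block or at one $v_i$. If $x$ had an in-neighbour far to the right of an out-neighbour, the long backward arc through $x$ would combine with the existing backward arcs at some $v_i$ or inside a block. I would exhibit this as an explicit copy of $M_5$ or $K_{3,3}$ built from $x$, the offending pair, and a $v_i$ lying between them.

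Once the neighbourhood of $x$ is shown to be ``almost monotone'', insert $x$ into the structure. There are three possibilities: $x$ joins an existing building block (becoming a new dotted vertex), creates a new block, or becomes a new $v_i$ that splits some $A_i$. Each must be checked against the list of allowed block types. Finally, we verify the boundary conditions ($v_1$ has an out-neighbour in the leftmost block of $A_1$, and symmetrically for $v_m$) using strong connectivity: otherwise the leftmost block would dominate everything, contradicting strongness. I expect the main obstacle to be this insertion step. The forbidden-subgraph arguments must be organised so that the number of cases stays manageable; the natural way is to exploit the duality of reversing all arcs, which swaps $M_5$ with $M_5^*$ and left with right, so that only ``left-side'' anomalies need to be treated.
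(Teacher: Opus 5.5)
\textbf{There is a genuine gap in the necessity direction.} Your central claim does not hold, and your three insertion moves do not cover all cases. The claim is that, relative to the representation of $T-x$, the in-neighbourhood of $x$ is an initial segment and the out-neighbourhood a final segment up to local exceptions, and that otherwise $x$, the offending pair and an intermediate join vertex span an $M_5$ or $K_{3,3}$.

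The root cause is that Figure~3 representations are not inherited by subtournaments. For instance, deleting $a_i$ from a four-vertex horizontal block leaves the directed triangle $v_iv_{i+1}b_iv_i$, which is not a horizontal block. So the representation of $T-x$ supplied by induction can be unrelated to every representation of $T$.

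Here is a concrete counterexample. Let $m=2$, $A_1=\{v_1,u\}$ with arc $v_1u$, $A_2=\{v_2,z\}$ with arc $zv_2$, and $B_1=\{v_1,a,b,v_2\}$ with arcs $v_1v_2,v_2a,v_2b,av_1,bv_1,ab$. The remaining arcs are $ua,ub,uv_2,uz,v_1z,az,bz$.
\begin{itemize}
\item This $T$ is strong and of Figure~3 form, hence M\"obius-free, and $T-a$ is strong.
\item $T-\{a,b\}$ is transitive with order $v_1,u,z,v_2$. The vertex $b$ sends arcs to $v_1,z$ and receives arcs from $u,v_2$.
\item Hence $T-a$ has a legitimate representation with $m=1$, join vertex $b$, and one-vertex building blocks $v_1,u,z,v_2$.
\item Relative to this representation, $x=a$ has exactly the alternating pattern of $b$: its out-neighbour $v_1$ is leftmost and its in-neighbour $v_2$ is rightmost. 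Yet $T$ contains no forbidden subgraph.
\end{itemize}
None of your three moves can produce $T$, since each keeps $b$ as a join vertex and adds at most $a$. The directed triangles $av_1ua$, $azv_2a$, $bv_1ub$, $bzv_2b$ exclude both possible join-vertex sets:
\begin{itemize}
\item $\{b\}$ would put five non-join vertices into one building block;
\item $\{a,b\}$ would, by Lemma~\ref{lem:obs on cycles}(ii), force $v_1,u,z,v_2$ all into the single horizontal block.
\end{itemize}
In $T$ itself the join vertices are $v_1,v_2$, and the old join vertex $b$ has been demoted.

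Relatedly, your insertion argument never uses $T\notin\{F_1,G_1\}$, but it must. Let $H$ be one of $F_1,G_1$ with the fewest vertices. For a suitable vertex $y$ of $H$, the tournament $H-y$ is strong, M\"obius-free and smaller than both, hence of Figure~3 form, while $H$ is not.

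\textbf{What is missing} is an intrinsic way to locate the join vertices. The route of Chen et al., summarized in the proof of Corollary~\ref{cor:exhibit global structure}, supplies this by inducting over 2-separations rather than single vertices:
\begin{itemize}
\item If $T$ is internally 2-strong, it is one of the nine small tournaments in $\mathcal{T}_0$.
\item Otherwise some $T\setminus v$ has a nontrivial dicut $(X,Y)$. It can be chosen so that contracting $X$ gives a piece $T_2$ from the finite list $\mathcal{T}_1$, and contracting $Y$ gives a smaller strong M\"obius-free $T_1$.
\item The structure of $T$ is then obtained by gluing $T_2$ onto that of $T_1$ at $v$.
\end{itemize}
Because 2-separations are intrinsic to $T$, the re-representation problem above never arises.

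\textbf{Your sufficiency sketch also rests on a false premise.} Cycles are not confined to a bounded window. By Lemma~\ref{lem:obs on cycles}(iii),(iv), for $m\ge 2$ a cycle can go from $v_1$ to a vertex $u$ in the leftmost building block of $A_1$, jump straight to $v_m$, and return through every horizontal block. So a copy of a forbidden graph may be spread over far-apart blocks. Also, each forbidden graph has at least six vertices, not four or five, since every five-vertex tournament is CM.

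The natural repair is again the 2-separation at each join vertex $v_i$. Use the dicut of $T\setminus v_i$ to confine any copy of a forbidden graph essentially to one side. This reduces the verification to the small pieces.
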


In this theorem, a {\em building block} of $A_i$ is a shaded stick containing $v_i$, labelled by $1,2, \ldots, n_i$, in Figure 3. What we mean by from ``left'' to ``right'' 
is from vertices on the left to those on the right. Moreover, each $A_i$ contains $v_i$ and each $B_i$ contains both $v_i$ and $v_{i+1}$. Following Chen et al. 
\cite{CDZZ3}, we call $A_1, A_2, \ldots, A_m$ {\em vertical blocks} of $T$, call $B_1, B_2, \ldots, B_{m-1}$ {\em horizontal blocks} of $T$, and call $v_1,v_2, 
\ldots, v_m$ the {\em join vertices} of $T$.  

\begin{figure}[htpb]
\centerline{\includegraphics[width=15cm]{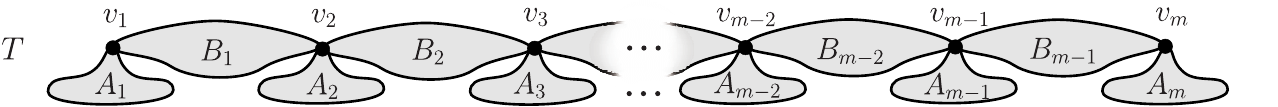}}
\end{figure}
\begin{figure}[htpb]
\centerline{\includegraphics[width=14cm]{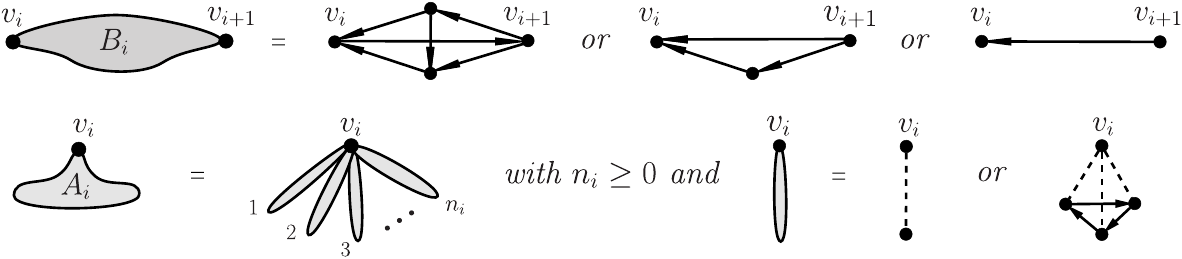}}
\caption{Global Structure}\label{figxj1}
\end{figure}

\vskip 1mm
The proof of Theorem \ref{structure} yields a polynomial-time algorithm for exhibiting the desired global structure. Since the authors did not  
estimate the complexity of this algorithm, we provide one below. Recall from \cite{CDZZ1} that a {\em dicut} in a digraph $G=(V,A)$ is a partition $(X,Y)$ of $V$ such 
that all arcs between $X$ and $Y$ are directed from $X$ to $Y$. A dicut $(X,Y)$ is called {\em trivial} if $|X|= 1$ or $|Y| = 1$. Furthermore, a digraph 
is called {\em weakly connected} if its underlying undirected graph is connected. A weakly connected digraph $G$ is called {\em internally strong} if 
every dicut of $G$ is trivial, and is called {\em internally 2-strong} (i2s) if $G$ is strong and $G\de v$ is internally strong for every vertex $v$.

\begin{corollary}\label{cor:exhibit global structure}
The global structure of $T$ described in Theorem \ref{structure} can be exhibited in $O(n^4)$ time, where $n=|V|$.
\end{corollary}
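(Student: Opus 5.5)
We follow the inductive proof of Theorem \ref{structure} in \cite{CDZZ3} and charge its operations to a polynomial budget. That proof proceeds by deleting vertices and splitting along nontrivial dicuts. It identifies the join vertices $v_1,\dots,v_m$ as the vertices whose removal creates nontrivial dicuts. It then reads off the vertical blocks $A_i$ and horizontal blocks $B_i$ as the pieces between consecutive join vertices, and orders the building blocks inside each $A_i$ by the ``left-to-right'' relation.

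The plan is to turn each of these steps into a routine with an explicit running time.

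\begin{enumerate}
\item \textbf{Preprocessing.} Check strong connectivity in $O(n^2)$ by two searches on the adjacency matrix. Recognise $F_1$ and $G_1$ in constant time, since they have bounded size.
\item \textbf{Dicuts of $T\de v$.} For each vertex $v$, decide whether $T\de v$ is internally strong, and if not, compute its dicut structure. Compute the strong components of $T\de v$ in $O(n^2)$. A tournament's strong components are linearly ordered, and every dicut is a prefix/suffix split of this order. Hence the nontrivial dicuts are exactly the splits leaving at least two vertices on each side, and they can be listed in $O(n^2)$ per $v$. Over all $v$ this costs $O(n^3)$.
\item \textbf{Join vertices and their order.} The vertices $v$ with $T\de v$ not internally strong are candidates for join vertices; in the i2s case the structure degenerates to $m=1$, which is handled separately. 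The ordering $v_1,\dots,v_m$ is recovered from the nesting of the component orders of the graphs $T\de v_i$: vertices left of $v_i$ must precede $v_i$'s split. Comparing two candidates costs $O(n)$ given the precomputed component indices, so sorting costs $O(n^2\log n)$ or at most $O(n^3)$.
\item \textbf{Blocks.} For each consecutive pair $v_i,v_{i+1}$, extract $B_i$ and $A_i$ as the vertex sets of the appropriate strong components of $T\de v_i$ and $T\de v_{i+1}$, using a constant number of $O(n^2)$ passes.
\item \textbf{Building blocks inside each $A_i$.} Recursively peel the leftmost building block: it is a set $S$ such that all arcs between $S\de v_i$ and $A_i\de S$, except the undirected/dotted ones, go from $S$ to the rest. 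Each peel can be found by a component computation on $A_i\de v_i$, or on that graph minus one further vertex, costing $O(n^3)$ for all choices of the extra vertex. There are at most $n$ peels, giving $O(n^4)$ in total, which is the dominating term.
\item \textbf{Verification.} Check that every arc not drawn in Figure 3 goes left to right, and that $v_1$ and $v_m$ have the required out- and in-neighbours. This takes $O(n^2)$.
\end{enumerate}

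The main obstacle is steps 3 and 5. Here the existential arguments of \cite{CDZZ3} must be made constructive with the right bound. We must show that the component orders of the graphs $T\de v$ (and $T\de\{v,u\}$) determine the join vertices and the building-block boundaries unambiguously, so that no search over subsets is needed.

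I would first use the statement of Theorem \ref{structure} itself: for a Möbius-free $T$, the drawn picture makes every building block of $A_i$ together with $v_i$ appear as a union of consecutive strong components of $T\de u$ for suitable single vertices $u$. Trying all $u$ ($n$ choices), each at $O(n^2)$, per peel ($n$ peels), then gives $O(n^4)$. If this fails for dotted edges, fall back to trying pairs $u,u'$ but restricting them to neighbours of the current boundary, which keeps the total within $O(n^4)$.
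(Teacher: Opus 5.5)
Your proposal has a genuine gap. All of the correctness burden sits in steps 3 and 4: identifying the join vertices, ordering them, and extracting the $A_i$ and $B_i$. There you only assert, and in your last two paragraphs explicitly postpone, the claim that these can be read off from the strong-component orders of the tournaments $T\setminus v$.

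The naive form of that claim is false. Take $m=1$ with four consecutive one-vertex building blocks whose non-join vertices are $x,y,z,u$ (in left-to-right order), with $v_1\to x$ and $y,z,u\to v_1$. Then $T\setminus x$ is transitive and has the nontrivial dicut $(\{y,z\},\{u,v_1\})$. So $x$ is one of your ``candidates'' although it is not a join vertex of this representation. Moreover, with $x$ in the role of $v_1$ the same tournament again fits Figure 3 with $m=1$, so the representation is not unique.

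Step 3 therefore needs a rule for selecting a mutually consistent set of join vertices, together with a proof that the resulting blocks satisfy Figure 3. Step 4 needs a proof that the ``appropriate'' components really are the blocks. Neither is given, and the fallback to pairs $u,u'$ is a contingency plan rather than an argument. Step 5, by contrast, is the easy part: once $A_i$ is known, its building blocks are just the strong components of $A_i\setminus v_i$.

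The paper sidesteps global identification by recursing along the decomposition of Chen et al.:
\begin{itemize}
\item If $T$ is i2s, it is one of nine fixed tournaments on at most six vertices, handled in constant time.
\item Otherwise there are a vertex $v$ and a nontrivial dicut $(X,Y)$ of $T\setminus v$ such that contracting $X$ gives one of seven fixed tournaments $T_2$ on at most six vertices, while contracting $Y$ gives a smaller strong tournament $T_1$.
\end{itemize}
Such a triple is found in $O(n^3)$ time by essentially your step 2: compute strong components of $T\setminus v$ for every $v$, examine the prefix/suffix splits, and do a constant-time catalogue check. One then obtains the structure of $T_1$ recursively and splices $T_2$ into it. With at most $n$ levels at $O(n^3)$ each, this gives $O(n^4)$. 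Correctness is inherited from the structure theorem rather than resting on new, unproved claims about component orders.
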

\vspace{-1mm}
{\bf Proof.} The structural description was carried out by induction on $n$ in Chen et al. \cite{CDZZ3}. The induction base is trivial.
To establish the induction step, first check if $T$ is i2s. If yes, then $T \in {\cal T}_0$, which consists of nine concrete tournaments, 
each with at most six vertices (see Theorem 1.2 in Chen et al. \cite{CDZZ1}). So the global structure of $T$ can be determined in 
constant time. Otherwise, apply Tarjan's algorithm \cite{tarjan} to find a dicut $(X,Y)$ in $T\de v$ for some vertex $v$ such that,
letting $T_1$ (resp. $T_2$) be the strong tournament obtained from $T$ by contracting $Y$ (resp. $X$) into a single vertex, 
$T_2 \in {\cal T}_1$, which comprises seven concrete tournaments, each with at most six vertices (see Theorem 1.3 
in Chen et al. \cite{CDZZ1}); this step requires $O(n^3)$ time. Thus the global structure of $T$ can be obtained by incorporating 
the small tournament $T_2$ into the global structure of $T_1$. Therefore the whole algorithm takes $O(n^4)$ time.  \qed\\

Throughout this paper, by a path or a cycle we always mean a directed one.  By a {\em triangle} we mean a directed cycle of
length three. Let $P$ be a path from $a$ to $b$. We say that $P$ is an $a$-$b$ {\em path}. Let $c$ and $d$ be two vertices on $P$ such that 
$a,c,d, b$ (not necessarily distinct) occur on $P$ in order as we traverse $P$ in its direction from $a$. We use $P[c,d]$ to denote the 
subpath of $P$ from $c$ and $d$, and set $P(c,d]=P[c,d]\de c$,  $P[c,d)=P[c,d]\de d$, and $P(c,d)=P[c,d]\de \{c,d\}$. Let $C$ be a cycle 
and let $u$ and $v$ be two vertices on $C$, we use $C[u,v]$ to denote the segment of $C$ from $u$ to $v$. For any digraph $H$, we use 
$V(H)$ and $A(H)$ to denote its vertex set and arc set, respectively, and set $|H|=|V(H)|$. For convenience, we also use $P$ (resp. $C$) 
to denote the arc set of a path $P$ (resp. a cycle $C$). An arc $e$ from $u$ to $v$ is denoted by $(u,v)$ or simply by $uv$; we call
$u$ the {\em tail} of $e$ and $v$ the {\em head} of $e$.

Let $\cal{G}$ denote the class of all tournaments $T$ as depicted in Figure 3, where $m\ge1$, undirected/dotted edges in the figure can be 
directed arbitrarily, and all other arcs (that are not drawn) are directed from ``left'' to ``right''. However, in contrast to Theorem \ref{structure}, 
$v_1$ might have no out-neighbor in the leftmost building block $A_{1,1}$ of $A_1$, and $v_m$ might have no in-neighbor in the rightmost 
building block $A_{m,n_m}$ of $A_m$. Note that such $T$ is not necessarily strong. Indeed, if $v_1$ has no out-neighbor 
in $A_{1,1}$, then all arcs with precisely one end in $A_{1,1}\de v_1$ are directed from $A_{1,1}\de v_1$. Since $A_{1,1}\de v_1$ is either a single
vertex or a triangle, we can obviously reduce the FAS and cycle packing problems on $T$ to those on $T'$, the subtournament 
of $T$ induced by all vertices outside $A_{1,1}\de v_1$, in this case. Similarly, reduction can be performed if $v_m$ has no in-neighbor in $A_{m,n_m}$.
We may repeat this ``tidy-up" process whenever possible.   

\vskip 1mm

\begin{lemma}\label{lem:obs on cycles}
Let $T$ be a tournament in ${\cal G}$. Then the following statements hold:
\begin{itemize}
\vspace{-2mm}
\item[(i)] All arcs between $\cup_{t=1}^{i-1} (A_t \cup B_t) \de v_i$ and $A_i\de v_i$ are directed to $A_i \de v_i$ for $2\le i \le m$, and all arcs 
between $A_i \de v_i$ and $(\cup_{t=i+1}^{m} A_t) \cup (\cup_{t=i}^{m-1} B_t) \de v_i$ are directed from $A_i \de v_i$ for $1\le i \le m-1$.

\vspace{-2mm}

\item[(ii)] If a cycle contains a vertex in $A_i$ but is not fully contained in $A_i \de v_i$, then  
it must pass through $v_i$ for $1\le i \le m$.
\vspace{-2mm}
\item[(iii)] Each $v_j$-$v_i$ path, with $1\le i<j\le m$, is fully contained in the horizontal blocks 
$B_{j-1}, B_{j-2},\\ \ldots, B_i$ and passes through the join vertices $v_j,v_{j-1}, \ldots,v_i$ in order.

\vspace{-2mm}
\item[(iv)] If $v_i$ (resp. $v_j$) is the join vertex on a cycle $C$ with the smallest (resp. largest) subscript, then
$C[v_i, v_j]$ contains no join vertex other than $v_i, v_j$ and contains at most one non-join vertex from each horizontal 
block $B_k$ for $i \le k \le j-1$. (Note that $C[v_i, v_j]$ if $i<j$ and $C$ otherwise may contain two non-join vertices 
from $B_{i-1}$ if $|B_{i-1}|=4$ or from $B_j$ if $|B_j|=4$.)  
\end{itemize}
\end{lemma}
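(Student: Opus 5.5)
We read everything off the structure in Figure 3. The key fact we use is that every arc not drawn in the figure goes from ``left'' to ``right''. The vertical block $A_i$ sits around $v_i$, with $B_{i-1}$ to its left and $B_i$ to its right. The only drawn (arbitrarily directed) edges incident with $A_i\de v_i$ join it to $v_i$ or lie inside $A_i$. We prove the four statements in order, each building on the previous ones.

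\textbf{(i).} We check from the figure that every vertex of $\cup_{t<i}(A_t\cup B_t)\de v_i$ lies to the left of every vertex of $A_i\de v_i$. We also check that no drawn edge joins these two sets, so all arcs between them are undrawn and go toward $A_i\de v_i$. Symmetrically, $(\cup_{t>i}A_t)\cup(\cup_{t\ge i}B_t)\de v_i$ lies entirely to the right of $A_i\de v_i$, so those arcs leave $A_i\de v_i$.

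\textbf{(ii).} This follows from (i). Let $C$ meet $A_i$ but not be contained in $A_i\de v_i$. If $C$ avoids $v_i$, it must contain a vertex of $A_i\de v_i$ and a vertex outside $A_i$. Then $C$ contains an arc entering $A_i\de v_i$ from outside and, later, an arc leaving it to outside. By (i), arcs from outside into $A_i\de v_i$ come only from the left part, and arcs out of it go only to the right part. Now look at the segment of $C$ that leaves $A_i\de v_i$ into the right part and eventually re-enters $A_i\de v_i$. While it avoids $v_i$ and $A_i$, it must pass from the right part back to the left part. We must therefore verify from Figure 3 that, after deleting $A_i$, the right part has no arc back to the left part. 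This holds because every backward arc across $v_i$ in the figure is a drawn edge incident with $v_i$ or internal to a block that contains $v_i$. So $C$ must use $v_i$.

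\textbf{(iii) and (iv).} For (iii), take a $v_j$-$v_i$ path $P$ with $i<j$. By (ii), once $P$ enters some $A_k\de v_k$ it cannot leave except through $v_k$, and it cannot both enter and leave through the single vertex $v_k$. So $P$ avoids every vertical-block interior except possibly at its endpoints, which means $P$ lives in the horizontal blocks. A backward move past $v_k$ is only possible inside $B_{k-1}\cup B_k$ via $v_k$, since $v_k$ is a cut point in the ``left--right'' order. Hence $P$ visits $v_{j-1},\dots,v_i$ in order and stays inside $B_{j-1},\dots,B_i$. For (iv), split $C$ into $C[v_i,v_j]$ and $C[v_j,v_i]$. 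The path $C[v_j,v_i]$ already passes through every join vertex between $v_i$ and $v_j$ by (iii), so $C[v_i,v_j]$ cannot repeat them. For the same reason, any non-join vertex of $B_k$ used by $C[v_j,v_i]$ is unavailable to $C[v_i,v_j]$. We then inspect the small building blocks $B_k$ (of size at most $4$) to see that a forward traversal from the $v_k$ side to the $v_{k+1}$ side not using join vertices takes at most one non-join vertex. The exception noted in the statement, for $B_{i-1}$ and $B_j$ of size $4$, arises exactly when no backward segment consumes them.

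\textbf{Main obstacle.} The main obstacle is (iv), together with the precise check in (ii). Both require careful case inspection of the possible horizontal blocks in Figure 3, specifically which arcs inside a block of size $3$ or $4$ may point backward. We would handle this by listing the block types from the figure and verifying each one directly.
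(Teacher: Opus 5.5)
Your proposal is correct in substance and follows the paper's route: (i) is read off Figure~3, (ii) and (iii) come from the left--right structure (the paper simply says that (i) implies them directly), and (iv) combines (iii) with the fact that in a $4$-vertex $B_k$ the upper segment $C[v_j,v_i]$ must use $a_k$ or $b_k$ because $v_kv_{k+1}$ points forward. It is this consumption, not the shape of $B_k$ alone (where $a_k\to b_k$ lets a forward path take both), that leaves at most one non-join vertex of $B_k$ for $C[v_i,v_j]$, and your final remark about the exceptional blocks shows you are using it.

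The one step to repair is in (iii). The claim that $P$ can leave $A_k\de v_k$ only through $v_k$ does not follow from (ii), which concerns cycles and only says they pass through $v_k$; a cycle may enter $A_k\de v_k$ from the left, exit to the right, and come back through $v_k$. It does follow from your own cut-point remark: with $L_k$ and $R_k$ the left and right sets of (i) at index $k$, there is no arc from $R_k$ into $L_k\cup(A_k\de v_k)$ and none from $A_k\de v_k$ into $L_k$. Hence, for $i\le k\le j$, a $v_j$-$v_i$ path stays in $R_k$ until it reaches $v_k$ and in $L_k$ afterwards.
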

\vspace{-1mm}

{\bf Proof.} Since arcs of $T$ not shown in Figure 3 are all directed from ``left" to ``right", (i) holds trivially, which 
implies both (ii) and (iii) directly.  

(iv) From (iii) we see that $C[v_i, v_j]$ contains no join vertex other than $v_i$ and $v_j$. If it contains two non-join vertices from some 
horizontal block $B_k$, with $i \le k \le j-1$. Then $|B_k|=4$ and thus $C[v_i, v_j]$ and $C[v_j, v_i]$ would have a common vertex in $B_k$ 
(see the the structure of $B_k$ in Figure 3), a contradiction. \qed\\

Let $\cal{G}^*$ denote the class of all tournaments in $\cal{G}$, in which $A_i\de v_i$ is acyclic for all vertical blocks $A_i$ (that is, 
each building block of each $A_i$ has precisely two vertices). This restricted class will play an important role in our design of algorithms. 
Let us show that the cycle packing problem on ${\cal G}$ can be reduced to that on ${\cal G}^*$.

\begin{lemma}\label{lem:packtriangle}
Let $T=(V,A)$ be a tournament in ${\cal G}$ with a nonnegative integral weight $w(e)$ on each arc $e$, let $\mathcal{R}$ consist of all triangles 
contained in $A_i \de v_i$ for all $i$, and let $\delta(S):=\min\{w(e): e\in S\}$ for $S\in \mathcal{R}$. Then there exists a maximum cycle packing $y$ 
in $(T, w)$, such that $y(S)=\delta(S)$ for all $S\in \mathcal{R}$.
\end{lemma}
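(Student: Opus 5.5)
The plan is an extremal exchange argument carried out one triangle at a time. The triangles in $\mathcal{R}$ are pairwise arc-disjoint. Each is $A_{i,k}\de v_i$ for a building block $A_{i,k}$ with four vertices. Moreover, by Lemma \ref{lem:obs on cycles}(i) the building blocks of $A_i$ are ordered left to right, so the only cycles contained in $A_i\de v_i$ are exactly these triangles. Among all maximum cycle packings $y$ of $(T,w)$, choose one maximizing $\sum_{S\in\mathcal R} y(S)$. Clearly $y(S)\le\delta(S)$ for every $S$. Suppose $y(S)<\delta(S)$ for some $S=A_{i,k}\de v_i$, and aim for a contradiction by producing another maximum packing with a larger value of this sum.

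First, some arc $e$ of $S$ must be saturated by $y$. Otherwise adding one copy of $S$ gives a larger packing, contradicting maximality. Hence there is a cycle $C\ne S$ with $y(C)>0$ and $e\in C$. By Lemma \ref{lem:obs on cycles}(ii), $C$ passes through $v_i$. Every arc between $A_{i,k}\de v_i$ and the rest of $T\de v_i$ is oriented from left to right by Lemma \ref{lem:obs on cycles}(i). So $C$ enters $S$ once and leaves once, and it uses a subpath $P$ of $S$ from an entry vertex $x$ to an exit vertex $z$ with $|P|\in\{1,2\}$. Write $C=v_i\,Q_1\,x\,P\,z\,Q_2\,v_i$.

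The exchange is done locally in the four-vertex tournament $A_{i,k}$. Here $v_i$ has both an in-neighbour and an out-neighbour in $S$; otherwise the block would be removed by the tidy-up, or the structure of Theorem \ref{structure} would be violated. Up to symmetry there are two cases: $v_i$ dominates exactly one vertex of $S$, or exactly two. In each case, and for each of the at most six possible positions of $P$, I would replace the arcs of $P$ in $C$ by a detour through $v_i$. Concretely, I would use an arc $x v_i$ or $v_i z$ and split $C$ at $v_i$ into a cycle that avoids the arcs of $S$. Then I would decrease $y(C)$ by one and increase both $y(S)$ and $y(C')$ by one, where $C'$ is the rerouted cycle (or the two cycles obtained from the split). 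This keeps the packing size at least as large and increases $\sum y(S)$.

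The main obstacle is that this exchange must stay feasible. The new cycle uses arcs incident with $v_i$ that $C$ did not use, and $S$ also uses the arcs of $S\setminus P$, which may already be saturated by other cycles. To handle this, I would strengthen the extremal choice. Among the packings maximizing $\sum_{S} y(S)$, take one that also minimizes the total $y$-weight of cycles through $v_i$ that use arcs of $S$, so that all cycles meeting $S$ can be handled together. I would then carry out a flow-style counting argument on the arcs of $A_{i,k}$. The loads on the three arcs of $S$ and on the three arcs joining $v_i$ to $S$ determine how the cycles through $v_i$ can be re-paired so that they avoid the saturated arc of $S$. This frees one unit on each arc of $S$ without decreasing the number of cycles. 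Checking this re-pairing in both orientation cases of $v_i$ relative to $S$ is where the real work lies.
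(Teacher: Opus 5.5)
\paragraph{Review.} Your setup agrees with the paper's. You take a maximum packing maximizing $\sum_{S\in\mathcal R}y(S)$, observe that some arc of $S$ is saturated, and use that every other cycle meeting $S$ passes through $v_i$ and meets $S$ in a single subpath. But the proof stops exactly where the argument is needed. You concede that keeping the exchange feasible is ``where the real work lies'' and defer it to an unspecified second extremal criterion and an unspecified ``flow-style counting'' re-pairing, so as written nothing is proved.

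The one concrete exchange you describe is rerouting $C$ through an arc $xv_i$ or $v_iz$ of the building block $A_{i,k}$, and it fails in general. Such an arc may point the wrong way, have weight $0$, or already be saturated. Moreover, a cycle meeting $S$ need not use any arc between $v_i$ and $S$ at all. It may leave $v_i$ toward an earlier building block or toward $B_{i-1}$, enter $S$ from there, and exit to the right. So the loads on the three arcs joining $v_i$ to $S$ do not control the situation. (You also cannot assume, for general $T\in\mathcal G$, that $v_i$ has both an in- and an out-neighbour in $S$, though that case is harmless.)

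\paragraph{The missing idea.} Splice segments of \emph{two different} cycles at a vertex of $S$ and at $v_i$, organized by how many arcs of $S=abc$ are saturated.
\begin{itemize}
\item If only $ab$ is saturated, move one unit from a cycle through $ab$ to $S$; the other two arcs have slack.
\item If $ab$ and $bc$ are saturated, pick $C_1\ni ab$ and $C_2\ni bc$ with positive weight. If one of them contains both arcs, swap it for $S$. Otherwise $C_1[b,v_i]$ and $C_2[v_i,b]$ avoid the arcs of $S$, since each cycle meets $S$ in one subpath. Their union therefore contains a cycle $C_3$ through $v_i$. Replacing $C_1,C_2$ by $S,C_3$ preserves the size and increases $y(S)$.
\item If all three arcs are saturated, there are two subcases. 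If some $C_j$ contains two arcs of $S$, the same splice with the cycle through the third arc works. Otherwise each $C_j$ contains exactly one arc of $S$. Then the three splices $C_1[v_i,a]\cup C_3[a,v_i]$, $C_1[b,v_i]\cup C_2[v_i,b]$ and $C_2[c,v_i]\cup C_3[v_i,c]$ each contain a cycle. These three cycles together with $S$ replace $C_1,C_2,C_3$, strictly enlarging the packing --- a contradiction.
\end{itemize}
This argument uses no arcs between $v_i$ and $S$ and needs no secondary extremal choice.
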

\vspace{-1mm}
{\bf Proof.} Let $y$ be a maximum cycle packing in $T$ such that 

\vskip 1mm
(1) $\sum_{S\in\mathcal{R}} y(S)$ is maximized. 

\vskip 1mm
We aim to prove that $y$ is as desired. Assume the contrary: $y(S)<\delta(S)$ for some triangle $S\in\mathcal{R}$. Then $w(e) \ge \delta(S)>0$
for each arc $e \in S$. Let ${\cal C}$ be the family of all cycles in $T$. Define $q(e):=\sum_{e\in C\in\cal{C}} y(C)$ and $w'(e):=w(e)-q(e)$ 
for each arc $e$ of $T$. Then both $q(e)$ and $w'(e)$ are nonnegative integers. Let $\mathcal{C}'$ denote the family of cycles in $T$ with $y(C)>0$ 
and let $A_i$ be the vertical block containing $S$. Let $a$, $b$ and $c$ be the vertices of $S$, with arcs $ab$, $bc$ and $ca$. From the maximality 
assumption on $y$, we deduce that at least one of $w'(ab)$, $w'(bc)$ and $w'(ca)$ is $0$. Depending on their values, we proceed by considering three cases.

{\bf Case 1.} Exactly one of $w'(ab)$, $w'(bc)$ and $w'(ca)$ is $0$, say $w'(ab)=0$. In this case, $q(ab)=w(ab)> 0$. So there exists a cycle $C'\in\mathcal{C}'\de\{S\}$, 
with $ab \in C'$. Since the triangles in $\mathcal{R}$ are pairwise vertex-disjoint, $C'\notin\mathcal{R}$. Define $y':\mathcal{C}\rightarrow\mathbb{Z}_+$ by
\[
y'(C)=
\begin{dcases}
y(C)-1 &\text{ if }C=C',\\
y(C)+1 &\text{ if }C=S,\\
y(C) &\text{ otherwise}.
\end{dcases}
\]
Clearly, $\sum_{C\in\mathcal{C}} y'(C)=\sum_{C\in\mathcal{C}}y(C)$.  Observe that 

\vskip 1mm
$\bullet$ $\sum_{e \in C\in\mathcal{C}} y'(C)\le q(e)+1= w(e)-w'(e)+1\le w(e)$ if $e \in \{bc, ca\}$, and 

\vskip 1mm
$\bullet$ $\sum_{e\in C\in\mathcal{C}}y'(C)\le q(e)\le w(e)$ otherwise. 

\vskip 1mm
\noindent Hence $y'$ is a maximum cycle packing in $T$ with $\sum_{S\in\mathcal{R}} y'(S)> \sum_{S\in\mathcal{R}} y(S)$, contradicting (1). 

\vskip 1mm
{\bf Case 2.} Exactly two of $w'(ab)$, $w'(bc)$ and $w'(ca)$ are 0, say $w'(ab)=w'(bc)=0$. In this case, $q(ab)=w(ab)>0$ and $q(bc)=w(bc)>0$. 
So there exist cycles $C_1$ and $C_2$ in $\mathcal{C}'$, with $ab\in C_1$ and $bc\in C_2$. By Lemma \ref{lem:obs on cycles}(ii), both $C_1$ 
and $C_2$ pass through the join vertex $v_i$. 

If one of $C_1$ and $C_2$, say the former, contains both $ab$ and $bc$, define $y':\mathcal{C}\rightarrow\mathbb{Z}_+$ by
\[
y'(C)=
\begin{dcases}
y(C)-1 &\text{ if }C=C_1,\\
y(C)+1 &\text{ if }C=S,\\
y(C) &\text{ otherwise}.
\end{dcases}
\]
It is routine to check that $y'$ is a maximum cycle packing with $\sum_{S\in\mathcal{R}}y'(S)>\sum_{S\in\mathcal{R}}y(S)$, contradicting (1).

So we may assume that $bc \notin C_1$ and $ab \notin C_2$. From the structural description of $T$ (see Theorem \ref{structure}), we see 
that both $C_1[b,v_i]$ and $C_2[v_i,b]$ are arc-disjoint from $S$. Let $C_3$ be a cycle in $C_1[b,v_i]\cup C_2[v_i,b]$ passing through $v_i$. 
Define $y':\mathcal{C}\rightarrow\mathbb{Z}_+$ by
\[
y'(C)=
\begin{dcases}
y(C)-1 &\text{ if }C=C_1\text{ or }C_2,\\
y(C)+1 &\text{ if }C=S\text{ or }C_3,\\
y(C) &\text{ otherwise}.
\end{dcases}
\]
Then $y'$ is a maximum cycle packing in $T$ with $\sum_{S\in\mathcal{R}}y'(S)>\sum_{S\in\mathcal{R}}y(S)$, contradicting (1) again.

\vskip 1mm
{\bf Case 3.} $w'(ab)=w'(bc)=w'(ca)=0$. In this case, there exist cycles $C_1$, $C_2$ and $C_3$ in $\mathcal{C}'$ passing through $v_i$, with $ab\in C_1$, 
$bc\in C_2$ and $ca\in C_3$. 

We first assume that one of $C_1$, $C_2$ and $C_3$ contains two arcs of $S$, say $\{ab, bc\} \subseteq C_1$. Then both $C_1[v_i,a]$ and $C_1[c,v_i]$ are 
arc-disjoint from $S$. Furthermore, at least one of the paths $C_3[a,v_i]$ and $C_3[v_i,c]$ is arc-disjoint from $S$. So either $C_1[v_i,a]\cup C_3[a,v_i]$ 
or $C_1[c,v_i]\cup C_3[v_i,c]$ contains a cycle $C_4$, passing through $v_i$, that is arc-disjoint from $S$. Define $y':\mathcal{C}\rightarrow\mathbb{Z}_+$ by
\[
y'(C)=
\begin{dcases}
y(C)-1 &\text{ if }C=C_1\text{ or }C_3,\\
y(C)+1 &\text{ if }C=S\text{ or }C_4,\\
y(C) &\text{ otherwise}.
\end{dcases}
\]
Then $y'$ is a maximum cycle packing in $T$ with $\sum_{S\in\mathcal{R}}y'(S)>\sum_{S\in\mathcal{R}}y(S)$, contradicting (1).

So we may assume that $C_j$ contains exactly one arc in $S$ for $j=1,2,3$. Then $C_1[v_i,a]\cup C_3[a,v_i]$ contains a cycle $C'_1$, 
$C_1[b,v_i]\cup C_2[v_i,b]$ contains a cycle $C'_2$, and $C_2[c,v_i]\cup C_3[v_i,c]$ contains a cycle $C'_3$, all passing through $v_i$. 
Note that these three cycles are arc-disjoint from $S$. Define $y':\mathcal{C}\rightarrow\mathbb{Z}_+$ by
\[
y'(C)=
\begin{dcases}
y(C)-1 &\text{ if }C=C_1,C_2\text{ or }C_3,\\
y(C)+1 &\text{ if }C=S,C'_1,C'_2\text{ or }C'_3,\\
y(C) &\text{ otherwise}.
\end{dcases}
\]
Then $y'$ is a cycle packing in $T$ having size larger than that of $y$; this contradiction completes the proof. \qed\\

Let $T=(V,A), w(e), \mathcal{R}$ and $\delta(S)$ be as specified in Lemma \ref{lem:packtriangle}, let $e_S$ be an arc of $S$ with $w(e_S)=\delta(S)$
for $S\in {\cal R}$, and let $T'=(V,A')$ be the tournament arising from $T$ by reversing the direction of $e_S$ for each $S \in {\cal R}$; let 
$\bar{e}_S$ denote the reverse of $e_S$. Note that $T'\in {\cal G}^*$. Once again, we use ${\cal C}$ to denote the family of all cycles in $T$, 
and use ${\cal C}'$ to denote the family of all cycles in $T'$. Consider the weight function $w': A'\rightarrow\mathbb{Z}_+$: 
\[
w'(e)=
\begin{dcases}
w(e)-\delta(S) &\text{ if } e\in A'\cap S \text{ for some } S\in\mathcal{R},\\
0 &\text{ if } e= \bar{e}_S \text{ for some }S\in\mathcal{R},\\
w(e) &\text{ otherwise}.
\end{dcases}
\]

\begin{lemma}\label{thm:notriangle packing}
Let $y'$ be a maximum cycle packing in $(T', w')$. Define $y:\mathcal{C}\rightarrow\mathbb{Z}_+$ by
\[
y(C)=
\begin{dcases}
\delta(C) & \text{ if } C\in\mathcal{R},\\
y'(C) &\text{ if } C\in\mathcal{C}\cap\mathcal{C}',\\
0 &\text{ otherwise}.
\end{dcases}
\]
Then $y$ is a maximum cycle packing in $(T, w)$.
\end{lemma}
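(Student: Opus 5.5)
The plan is to compare the size of $y$ with that of a suitably normalized maximum packing of $(T,w)$, which Lemma \ref{lem:packtriangle} provides. This requires two things: that $y$ is a feasible cycle packing of $(T,w)$, and that no cycle packing of $(T,w)$ is larger. Throughout we use that a cycle of $T$ lies in $\mathcal{C}'$ exactly when it avoids every arc $e_S$, and a cycle of $T'$ lies in $\mathcal{C}$ exactly when it avoids every reversed arc $\bar e_S$.

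\emph{Preliminary observations.} First, every $S\in\mathcal{R}$ contains $e_S$, which is not an arc of $T'$, so $\mathcal{R}\cap\mathcal{C}'=\emptyset$. Hence the first two cases in the definition of $y$ do not overlap, and $y$ is well defined. Second, since $w'(\bar e_S)=0$, every cycle $C\in\mathcal{C}'$ with $y'(C)>0$ avoids all the arcs $\bar e_S$, so it lies in $\mathcal{C}\cap\mathcal{C}'$. Therefore
\[
\sum_{C\in\mathcal{C}} y(C)=\sum_{S\in\mathcal{R}}\delta(S)+\sum_{C\in\mathcal{C}'}y'(C).
\]

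\emph{Feasibility.} We check the load of $y$ arc by arc, using that the triangles in $\mathcal{R}$ are pairwise vertex-disjoint.
\begin{itemize}
\item An arc $e_S$ is used only by $S$, because cycles in $\mathcal{C}\cap\mathcal{C}'$ avoid it. Its load is therefore $\delta(S)=w(e_S)$.
\item An arc $e\in S\cap A'$ is used by $S$ with multiplicity $\delta(S)$ and by the cycles of $y'$ at most $w'(e)=w(e)-\delta(S)$ times. Its total load is at most $w(e)$.
\item Every other arc lies in no triangle of $\mathcal{R}$ and has $w'(e)=w(e)$, so its load is at most $w(e)$.
\end{itemize}
Thus $y$ is a cycle packing of $(T,w)$.

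\emph{Optimality.} By Lemma \ref{lem:packtriangle}, there is a maximum cycle packing $z$ of $(T,w)$ with $z(S)=\delta(S)$ for all $S\in\mathcal{R}$.
\begin{itemize}
\item Consider any cycle $C\notin\mathcal{R}$ with $z(C)>0$. If $C$ contained some $e_S$, the load of $z$ on $e_S$ would exceed $\delta(S)=w(e_S)$, which is impossible. So $C$ avoids every $e_S$, and hence $C\in\mathcal{C}'$.
\item Let $z'$ be the restriction of $z$ to the cycles outside $\mathcal{R}$, viewed as a vector on $\mathcal{C}'$. On an arc $e\in S\cap A'$, its load is at most $w(e)-\delta(S)=w'(e)$. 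It does not use any $\bar e_S$, and on all remaining arcs its load is at most $w(e)=w'(e)$.
\item So $z'$ is a cycle packing of $(T',w')$. Maximality of $y'$ then gives
\[
\sum_{C\in\mathcal{C}} z(C)=\sum_{S\in\mathcal{R}}\delta(S)+\sum_{C\in\mathcal{C}'} z'(C)\le \sum_{S\in\mathcal{R}}\delta(S)+\sum_{C\in\mathcal{C}'}y'(C)=\sum_{C\in\mathcal{C}} y(C).
\]
\end{itemize}
Hence $y$ is maximum.

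The only step requiring care is the first bullet of the optimality argument: the saturation of $e_S$ by $S$ alone forces all other cycles of $z$ to avoid $e_S$, and this is what lets us transfer $z'$ into $T'$. This step is routine once Lemma \ref{lem:packtriangle} is available, so no substantial obstacle is expected.
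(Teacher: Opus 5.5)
Your proposal is correct and takes the same route as the paper. Feasibility is checked arc by arc, and optimality compares $y$ with the normalized maximum packing from Lemma \ref{lem:packtriangle}: its cycles outside $\mathcal{R}$ must avoid the saturated arcs $e_S$, and therefore restrict to a packing of $(T',w')$. You also spell out two points the paper leaves implicit or misstates. First, you verify that this restriction is feasible in $(T',w')$. Second, you correctly invoke $w'(\bar e_S)=0$ where the paper writes $w'(e_S)=0$.
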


{\bf Proof.} For each $S\in {\cal R}$, we have $w'(e_S)=0$, so $y'(C)=0$ for all $C\in {\cal C}'$ containing $e_S$. Since 
$\sum_{e\in C\in\mathcal{C}'} y'(C)\le w'(e)$ for any $e\in A'$, it is then routine to check that $\sum_{e\in C\in\mathcal{C}} 
y(C)\le w(e)$ for any $e\in A$. Hence $y$ is a cycle packing in $(T, w)$.

To see maximality enjoyed by $y$, let $\bar{y}$ be a maximum cycle packing in $(T, w)$ 
satisfying $\bar y(S)=\delta(S)$ for all $S\in\mathcal{R}$; such $\bar{y}$ is available by Lemma \ref{lem:packtriangle}.
Note that $\bar y(C)=0$ for $C\in\mathcal{C}\de(\mathcal{C}'\cup\mathcal{R})$, since every cycle in $\mathcal{C}\de\mathcal{C}'$ 
contains an arc $e_S$ for some $S\in\mathcal{R}$, which is already saturated by $\bar y(S)$. Thus
\[\sum_{C\in\mathcal{C}}\bar y(C)=\sum_{C\in\mathcal{C}\cap\mathcal{C}'}\bar y(C)+\sum_{C\in\mathcal{R}}\bar y(C)
\le \sum_{C\in \mathcal{C}'} y'(C)+\sum_{C\in\mathcal{R}} \delta(C)
=\sum_{C\in\mathcal{C}\cap\mathcal{C}'} y'(C)+\sum_{C\in\mathcal{R}} \delta(C)=\sum_{C\in\mathcal{C}} y(C).\]
Therefore $y$ is also a maximum cycle packing in $(T, w)$.  \qed

\section{Feedback Arc Sets}

The purpose of this section is to present a combinatorial polynomial-time algorithm for finding minimum FAS's in arc-weighted
tournaments in ${\cal G}^*$ using a dynamic programming approach and network flow techniques. 

\begin{theorem}\label{FAS}
Let $T=(V,A)$ be a tournament in ${\cal G}^*$ with a nonnegative integral weight $w(e)$ on each arc $e$. Then a minimum FAS in $(T, w)$
can be found in $O(n^5)$ time, where $n=|V|$.
\end{theorem}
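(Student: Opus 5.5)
A minimum FAS is the complement of a maximum-weight acyclic arc set. Equivalently, it is determined by a linear order of $V$ minimizing the total weight of backward arcs. I would exploit the chain structure of tournaments in ${\cal G}^*$ given by Theorem~\ref{structure} and Lemma~\ref{lem:obs on cycles}, and run a left-to-right dynamic program over the join vertices $v_1,\dots,v_m$.

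\textbf{Vertical blocks.} In ${\cal G}^*$ each building block $A_{i,k}$ consists of $v_i$ together with one further vertex $u_{i,k}$. The arc between $v_i$ and $u_{i,k}$ is either forward or backward relative to the left-to-right order. Since $A_i\setminus v_i$ is acyclic and all its other arcs go left to right, by Lemma~\ref{lem:obs on cycles}(i),(ii) every cycle meeting $A_i\setminus v_i$ passes through $v_i$. Every cycle meeting $A_i\setminus v_i$ thus uses a path $v_i\to u_{i,k}\to\cdots\to u_{i,l}\to v_i$ with $k\le l$. Hitting all such cycles is a problem on a single ``line''. I would compute by a one-dimensional recursion the cheapest cut through the sequence $A_{i,1},\dots,A_{i,n_i}$. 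This amounts to choosing a split index $k$ and deleting the backward arcs into $v_i$ before $k$ and the forward arcs out of $v_i$ after $k$, or equivalently deciding where $v_i$ sits in the order. More generally, I would compute this for each possible ``position'' of $v_i$ relative to its neighbours in the horizontal blocks, since interactions with $B_{i-1}$ and $B_i$ only go through $v_i$.

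\textbf{DP over horizontal blocks.} By Lemma~\ref{lem:obs on cycles}(iii),(iv), any cycle spanning several blocks travels forward along left-to-right arcs and returns through a $v_j$–$v_i$ path that passes consecutively through $B_{j-1},\dots,B_i$, using at most one non-join vertex from each. Hence the feedback condition decomposes as follows. I would define a state at $v_i$ recording which ``backward channels'' through $B_{i-1}$ remain uncut. There are only $O(1)$ types, since $|B_i|\le 4$, but the channel can originate at any $v_j$ with $j>i$; the DP therefore indexes pairs $(i,j)$. I would define $f(i,j)$ as the minimum cost of a set of deleted arcs in the blocks between $v_i$ and $v_j$ that kills all cycles whose smallest and largest join indices lie in $[i,j]$, subject to boundary conditions on $v_i$ and $v_j$. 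I would then combine values via a recurrence over an intermediate join vertex. That gives $O(n^2)$ states with $O(n)$ transitions each, and each transition additionally requires a min-cut computation.

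\textbf{Min-cut step.} The forward arcs form an essentially acyclic ``left-to-right'' structure. Killing all $v_j\to v_i$ return paths together with forward paths from $v_i$ to $v_j$ is an $s$–$t$ cut problem in an auxiliary network. This is where network flow enters, and it is also where the $O(n^5)$ bound comes from: $O(n^3)$ DP transitions times $O(n^2)$ for a flow on a small or structured network, or $O(n^2)$ states times an $O(n^3)$ max-flow.

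The main obstacle I expect is proving the decomposition is exact. Concretely, I must show that an optimal FAS restricted to the region $[i,j]$ interacts with the outside only through the boundary states at $v_i$ and $v_j$. The delicate cases are blocks $B_k$ with $|B_k|=4$, where a cycle may use two non-join vertices of $B_{i-1}$ or $B_j$, as noted in Lemma~\ref{lem:obs on cycles}(iv). I would handle these by enumerating the constantly many orientations of $B_k$ and enlarging the boundary state accordingly.
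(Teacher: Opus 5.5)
There is a genuine gap. Your skeleton resembles the paper's: a DP over pairs of positions on the chain of join vertices, one max-flow per state, $O(n^2)\cdot O(n^3)$. But the step that makes it correct is missing, and the parts you do specify do not work. For every cycle, an FAS must hit either its lower segment $v_p\to\cdots\to v_q$ or its return path through $B_{q-1},\dots,B_p$. You give no network that turns this either/or into a cut, and ``killing all $v_j\to v_i$ return paths together with forward paths'' misdescribes the requirement.

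The missing idea is the dichotomy on \emph{horizontal cuts}, i.e.\ inclusionwise minimal $v_{k+1}$--$v_k$ cuts of $B_k$.
\begin{itemize}
\item If an optimal FAS contains a horizontal cut, the tournament splits into a left and a right part whose remaining crossing arcs all go left to right, so the two parts are solved independently. This is the recurrence. The DP state is then just a pair of horizontal cuts $(\sigma,\omega)$; no bookkeeping of which ``backward channels'' survive is needed.
\item If it contains none, then for all $p<q$ some $v_q$--$v_p$ path survives, so $F$ must meet \emph{every} path in $\mathcal{Q}$. This is a single multi-source/multi-sink cut problem. The network splits each $v_p$ into a source $s_p$ and a sink $t_p$, and each $a_k,b_k$ into a left and a right copy, which destroys all return paths.
\end{itemize}

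Exactness then needs real arguments, and enumerating orientations of $B_k$ does not supply them. The arc $a_kb_k$ lies on lower segments ending at $v_k$ and on ones starting at $v_{k+1}$, so it must be duplicated in the network. One must then show that minimal FASs without horizontal cuts and minimal cuts correspond with equal weight. This uses two facts: such an FAS containing $a_kb_k$ contains exactly one of $a_kv_k$ and $v_{k+1}b_k$, and it meets every $\mathcal{Q}$-path with distinct ends outside the arcs $a_kb_k$. One must also show that if the minimum cut yields an FAS containing a horizontal cut, the recursive branch is at least as good. These facts are exactly the ``exactness of the decomposition'' you leave open.

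Your treatment of vertical blocks is also incorrect, because $A_i$ does not interact with the rest only through $v_i$. Each $u_{i,k}$ receives arcs from everything to its left and sends arcs to everything to its right. Hence, for $p<i<q$, the path $v_p\to u_{i,k}\to v_q$ is the lower segment of cycles closing through $B_{q-1},\dots,B_p$. Likewise, $v_iu_{i,k}$ or $u_{i,k}v_i$ begins or ends lower segments such as $v_i\to u_{i,k}\to v_{i+1}$ or $v_{i-1}\to u_{i,k}\to v_i$. So the arcs at $A_i\setminus v_i$ cannot be settled by a separate ``split index'' computation. The paper simply keeps them in the flow network, where cycles inside $A_i$ become $s_i$--$t_i$ paths.

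Finally, only one min cut per state is needed, since it computes the no-horizontal-cut value for the whole state. One min cut per transition, as you suggest, would cost $O(n^6)$.
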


Recall that $T$ is as depicted in Figure 3, in which $A_i \de v_i$ is acyclic for all vertical blocks $A_i$ (that is, each building 
block of each $A_i$ has precisely two vertices). For each horizontal block $B_i$, we reserve the symbol $a_i$ for the vertex in
$B_i \backslash \{v_i, v_{i+1}\}$ if $|B_i|=3$, and reserve the symbols $a_i$ and $b_i$ for the vertices in $B_i \backslash 
\{v_i, v_{i+1}\}$ with $a_ib_i\in A(B_i)$ if $|B_i|=4$. An arc subset $\pi$ of $B_i$ is called a $v_{i+1}$-${v_i}$ {\em cut} of $B_i$
if $B_i \de \pi$ contains no $v_{i+1}$-${v_i}$ path.  An inclusionwise minimal $v_{i+1}$-${v_i}$ cut $\sigma$ of $B_i$, for any $1\le i \le m-1$, 
is called a {\em horizontal cut} of $T$.  Let $B_{\sigma \ell}$ be the subgraph of $B_i$ induced by all vertices that are reachable 
to $v_{i}$ in $B\de \sigma$ and let $B_{\sigma r}:=B_i\de V(B_{\sigma \ell})$. Let $T_{\sigma \ell}$ denote the subtournament of $T$ induced by all vertices in
$(\cup_{t=1}^i A_t) \cup (\cup_{t=1}^{i-1} B_t) \cup B_{\sigma \ell}$ and let $T_{\sigma r}$ denote the subtournament of $T$ induced by all vertices in
$(\cup_{t=i+1}^m A_t) \cup (\cup_{t=i+1}^{m-1} B_t) \cup B_{\sigma r}$. Intuitively, $T_{\sigma \ell}$ (resp. $T_{\sigma r}$) is the subtournament of
$T$ on the left (resp. right) of the horizontal cut $\sigma$ in Figure 3. Note that in $T\de \sigma$ all arcs between $T_{\sigma \ell}$ and
$T_{\sigma r}$ are directed to $T_{\sigma r}$, as arcs not shown in Figure 3 are directed from ``left" to ``right". 

Let us exhibit some properties satisfied by inclusionwise minimal FAS's in $T$.

\begin{lemma}\label{lem:FAS}
Let $F$ be a minimum FAS in $T$. If $F$ contains a horizontal cut $\sigma$, then $A(T_{\sigma \ell})\cap F$ (resp. $A(T_{\sigma r})\cap F$) 
is a minimum FAS in $T_{\sigma \ell}$ (resp. $T_{\sigma r}$).
\end{lemma}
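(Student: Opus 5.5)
We will prove the statement by an exchange argument. The key structural fact, noted just before the lemma, is that in $T\de\sigma$ every arc between $T_{\sigma\ell}$ and $T_{\sigma r}$ is directed from $T_{\sigma\ell}$ to $T_{\sigma r}$. We treat the left side; the right side is symmetric. Write $F_\ell:=A(T_{\sigma\ell})\cap F$ and $F_r:=A(T_{\sigma r})\cap F$, and let $F_c$ be the set of arcs of $F$ with one end in each side.

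The first step is to check that $\sigma\subseteq F_c$, that is, every arc of $\sigma$ joins $V(B_{\sigma\ell})$ to $V(B_{\sigma r})$. Minimality of $\sigma$ as a $v_{i+1}$-$v_i$ cut gives this: an arc of $\sigma$ with both ends on one side could be restored without creating a $v_{i+1}$-$v_i$ path. Indeed, a $v_{i+1}$-$v_i$ path in $B_i\de\sigma$ would have to cross from $B_{\sigma r}$ to $B_{\sigma\ell}$, and an arc from a vertex not reaching $v_i$ to a vertex reaching $v_i$ is impossible. Moreover, $v_i\in B_{\sigma\ell}$ and $v_{i+1}\in B_{\sigma r}$. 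Hence $(V(T_{\sigma\ell}),V(T_{\sigma r}))$ is a dicut of $T\de\sigma$, and therefore of $T\de F_c$.

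Next, $F_\ell$ is an FAS of $T_{\sigma\ell}$. Any cycle in $T_{\sigma\ell}\de F_\ell$ is a cycle of $T$ that avoids $F$, since none of its arcs lies in $F_r\cup F_c$; this contradicts $F$ being an FAS.

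Finally, suppose $F'_\ell$ is an FAS of $T_{\sigma\ell}$ with $w(F'_\ell)<w(F_\ell)$, and set $F':=F'_\ell\cup F_c\cup F_r$, so that $w(F')<w(F)$. We claim $F'$ is an FAS of $T$. Any cycle $C$ in $T\de F'$ avoids $F_c\supseteq\sigma$. By the dicut property above, $C$ cannot use an arc from $T_{\sigma r}$ back to $T_{\sigma\ell}$, so $C$ lies entirely within one side. If $C$ lies in $T_{\sigma\ell}$, it contradicts the choice of $F'_\ell$; if it lies in $T_{\sigma r}$, it avoids $F_r$ and hence all of $F$, which is impossible. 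This contradicts the minimality of $F$.

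The only delicate point is the dicut claim of the first step: it must be verified carefully against the structure in Figure 3, in particular that every arc of $T$ between the two sides that does not belong to $B_i$ runs left to right. This follows from Lemma \ref{lem:obs on cycles}(i) and the left-to-right convention for undrawn arcs. The remaining steps are routine.
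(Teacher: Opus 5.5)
Your proof is correct and follows the paper's approach: the paper only calls the lemma straightforward, and your replacement argument (swap $F_\ell$ for a cheaper FAS of $T_{\sigma\ell}$, then use that every arc of $T\setminus\sigma$ between $T_{\sigma\ell}$ and $T_{\sigma r}$ points to $T_{\sigma r}$, as the paper notes just before the lemma) is exactly the reasoning it leaves implicit. Your first step, which invokes minimality of $\sigma$, is harmless but unnecessary, since by the definition of $B_{\sigma\ell}$ every arc of $B_i$ from $B_{\sigma r}$ to $B_{\sigma\ell}$ already lies in $\sigma\subseteq F$ and crosses the partition, hence lies in $F_c$.
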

\vspace{-1mm}
{\bf Proof.} The statement is straightforward. \qed

\vspace{1mm}

\begin{lemma}\label{lem:arc in FAS}
Let $F$ be an inclusionwise minimal FAS in $T$. Then, for any $uv \in F$, there exists a $v$-$u$ path $P$ in $T$ with $P\cap F=\emptyset$.
Furthermore, every $u$-$v$ path $Q$ in $T$ satisfies $Q\cap F\neq\emptyset$.
\end{lemma}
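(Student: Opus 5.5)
Both assertions follow from the minimality of $F$ and the acyclicity of $T\de F$; no structural property of ${\cal G}^*$ is needed.

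\emph{First assertion.} Take any $uv\in F$ and set $F':=F\de\{uv\}$. By inclusionwise minimality, $F'$ is not an FAS, so $T\de F'$ contains a cycle $C$. The only arc of $T\de F'$ missing from $T\de F$ is $uv$, and $T\de F$ is acyclic. Hence $C$ must use $uv$. Removing $uv$ from $C$ leaves the path $P:=C[v,u]$, a $v$-$u$ path in $T$. Every arc of $P$ lies in $C\de\{uv\}\subseteq A\de F$, so $P\cap F=\emptyset$.

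\emph{Second assertion.} Suppose, to the contrary, that some $u$-$v$ path $Q$ satisfies $Q\cap F=\emptyset$. Let $P$ be the $v$-$u$ path from the first part, also avoiding $F$. Then $Q\cup P$ is a closed directed walk in $T\de F$. It contains at least one arc, since $Q$ goes from $u$ to $v\neq u$. A closed directed walk with at least one arc always contains a directed cycle, so $T\de F$ has a cycle. This contradicts $F$ being an FAS.

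\emph{Degenerate case.} If $Q$ is the single arc $uv$, then $Q\cap F=\{uv\}\neq\emptyset$ trivially. So we may assume $Q$ has no arc in $F$ and in particular $uv\notin Q$, though this is not needed for the argument.

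There is no real obstacle here. The only point needing care is the observation that a closed walk, as opposed to a cycle, still yields a cycle. The standard reason is that a shortest closed subwalk between the first repeated vertex occurrences is a cycle.
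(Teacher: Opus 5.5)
Your proof is correct and follows essentially the same route as the paper. By minimality, $T\de(F\de\{uv\})$ contains a cycle, which must pass through $uv$ and so yields an $F$-free $v$-$u$ path; an $F$-free $u$-$v$ path would combine with it into a closed walk, hence a cycle, in $T\de F$. The only differences are that you argue the first part directly rather than by contradiction, and you spell out the closed-walk-to-cycle step that the paper leaves implicit.
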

\vspace{-1mm}

{\bf Proof.} Assume the contrary: Every $v$-$u$ path contains an arc in $F$. Then $F\de\{uv\}$ would remain to be an FAS in $T$, contradicting the minimality 
assumption on $F$. Hence $P\cap F=\emptyset$ for some $v$-$u$ path $P$. If $Q\cap F=\emptyset$ for some $u$-$v$ path $Q$, then every cycle contained in 
$P\cup Q$ would be disjoint from $F$, a contradiction. \qed\\

Throughout we reserve the symbol $\mathcal{Q}$ for the family of all $v_i$-$v_j$ paths $Q$, with $1\le i\le j\le m$, that contain no join vertex other 
than $v_i$ and $v_j$, here $Q$ is a cycle passing through $v_i$ when $i=j$.

\begin{lemma}\label{lem:FAS with no horizontal cut}
Let $F$ be an inclusionwise minimal FAS in $T$ containing no horizontal cut. Then the following statements hold:
\begin{itemize}
\vspace{-2mm}
\item[(i)] For any $1\le i<j\le m$, there exists a $v_j$-$v_i$ path $P$ with $P\cap F=\emptyset$.
\vspace{-2mm}
\item[(ii)] For any $Q\in\mathcal{Q}$, we have $Q\cap F\neq\emptyset$.
\vspace{-2mm}
\item[(iii)] If $a_ib_i\in F$ for some $i$ with $|B_i|=4$, then exactly one of $a_iv_i$ and $v_{i+1}b_i$ belongs to $F$.
\vspace{-2mm}
\item[(iv)] For any $v_i$-$v_j$ path $Q\in\mathcal{Q}$ with $i<j$, $Q\cap F$ contains at least one arc outside 
$\{a_kb_k: 1\le k\le m-1\ and\ |B_k|=4\}$.
\vspace{-2mm}
\item[(v)] If $|B_i|= 2$, then $v_{i+1}v_i\notin F$. If $|B_i|= 3$, then $v_{i+1}v_i\notin F$ and at most one of $a_iv_i$ and $v_{i+1}a_i$ 
belongs to $F$. Moreover, if $|B_i|=4$, then $v_iv_{i+1}\in F$ and exactly one of the following three cases occurs: 

$\bullet$ $(A(B_i)\de\{v_iv_{i+1}\})\cap F=\emptyset$; 

$\bullet$ $\{a_iv_i,a_ib_i,v_{i+1}b_i\}\cap F=\emptyset$ and exactly one of $b_iv_i$ and $v_{i+1}a_i$ belongs to $F$; and 

$\bullet$ $a_ib_i\in F$, $\{b_iv_i,\ v_{i+1}a_i\}\cap F=\emptyset$, and exactly one of $a_iv_i$ and $v_{i+1}b_i$ belongs to $F$.
    
\end{itemize}
\end{lemma}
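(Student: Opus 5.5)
The plan is to argue throughout from two facts. First, since $F$ contains no horizontal cut, every $B_i\de F$ still contains a $v_{i+1}$-$v_i$ path. Second, Lemma \ref{lem:arc in FAS} says that for each $uv\in F$ there is an $F$-free $v$-$u$ path, while every $u$-$v$ path meets $F$. Throughout I will use the following orientation of the horizontal blocks, which I infer from Figure 3 and the statement rather than from an earlier result:
\begin{itemize}
\item for $|B_i|=4$: arcs $v_{i+1}a_i$, $v_{i+1}b_i$, $a_iv_i$, $b_iv_i$, $a_ib_i$ and $v_iv_{i+1}$;
\item for $|B_i|=3$: arcs $v_{i+1}a_i$ and $a_iv_i$, plus one arc between $v_i$ and $v_{i+1}$;
\item for $|B_i|=2$: the single arc $v_{i+1}v_i$.
\end{itemize}
Every other arc goes from left to right. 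If this reading is wrong, the case details in (iii) and (v) change, but the method does not.

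For (i), I pick an $F$-free $v_{k+1}$-$v_k$ path in each $B_k$ for $k=j-1,\dots,i$ and concatenate them; by Lemma \ref{lem:obs on cycles}(iii) this is exactly the form of a $v_j$-$v_i$ path. For (ii), suppose some $Q\in\mathcal{Q}$ is $F$-free. If $i=j$, $Q$ is itself an $F$-free cycle. If $i<j$, then $Q$ followed by the path $P$ from (i) is an $F$-free closed walk, and it contains an $F$-free cycle. Either way this contradicts $F$ being an FAS.

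For (v), the cases $|B_i|=2,3$ are direct. If $|B_i|=2$, the arc $v_{i+1}v_i$ is the only $v_{i+1}$-$v_i$ path in $B_i$, so putting it in $F$ would create a horizontal cut. If $|B_i|=3$, the same argument applies to $v_{i+1}v_i$ if that arc is present. For the pair $a_iv_i$, $v_{i+1}a_i$: having both in $F$ destroys the path $v_{i+1}a_iv_i$, and together with $v_{i+1}v_i$, which is then not in $F$ if present, we get a cut. So I need to show that $v_{i+1}v_i\notin F$ already rules this out, by checking that $\{a_iv_i,v_{i+1}a_i\}$, or it together with the forced arcs, contains a minimal $v_{i+1}$-$v_i$ cut of $B_i$.

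For $|B_i|=4$, several things follow:
\begin{itemize}
\item $v_iv_{i+1}\in F$: by (i) there is an $F$-free $v_{i+1}$-$v_i$ path, and together with $v_iv_{i+1}$ it closes a cycle.
\item Not all three $v_{i+1}$-$v_i$ paths ($v_{i+1}a_iv_i$, $v_{i+1}b_iv_i$, $v_{i+1}a_ib_iv_i$) can be hit, which restricts $F\cap A(B_i)$.
\item The three listed cases are then obtained by combining this with (iii).
\item Minimality removes the remaining configurations: if two arcs could be dropped, Lemma \ref{lem:arc in FAS} produces an $F$-free path closing an $F$-free cycle.
\end{itemize}

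For (iii), suppose $a_ib_i\in F$. By Lemma \ref{lem:arc in FAS} there is an $F$-free $b_i$-$a_i$ path. By the orientation this path has to be of the form $b_iv_i\cdots v_{i+1}a_i$, which gives $b_iv_i,v_{i+1}a_i\notin F$ and an $F$-free $v_i$-$v_{i+1}$ subpath $R$. If neither $a_iv_i$ nor $v_{i+1}b_i$ were in $F$, then $a_iv_iRv_{i+1}b_i$ would be an $F$-free $a_i$-$b_i$ path, contradicting the second half of Lemma \ref{lem:arc in FAS}. The upper bound, that not both lie in $F$, is the step I expect to be the main obstacle, because it needs a genuine minimality argument. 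My plan is to show that if both $a_iv_i$ and $v_{i+1}b_i$ lie in $F$, then $F\de\{a_ib_i\}$ is still an FAS. Every cycle through $a_ib_i$ has the form $a_ib_iv_i\cdots v_{i+1}a_i$ or enters $a_i$ from the left, and I would like to show that each of these is already hit elsewhere. If that fails, I would instead apply Lemma \ref{lem:arc in FAS} to $a_iv_i$ and to $v_{i+1}b_i$ separately, which gives $F$-free paths $v_i\to a_i$ and $b_i\to v_{i+1}$. These must avoid $a_ib_i$, and splicing them with $R$ should produce an $F$-free cycle.

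For (iv), take $Q$ with $i<j$ and suppose $Q\cap F$ consists only of arcs $a_kb_k$. In each such $B_k$, (iii) and its proof give $F$-free arcs $b_kv_k$ and $v_{k+1}a_k$. I would try to combine $Q$, these arcs and the $F$-free return path $P$ from (i) into an $F$-free closed walk through $v_i$ and $v_j$ that avoids every $a_kb_k$. This requires rerouting $Q$ around each $a_kb_k$ via $a_k\to v_k$ or $v_{k+1}\to b_k$, whichever of the two is $F$-free by (iii), and then using the $F$-free $v_k$-$v_{k+1}$ connections obtained in the proof of (iii). Making this rerouting precise, and matching it with Lemma \ref{lem:obs on cycles}(iv), is the second delicate point.
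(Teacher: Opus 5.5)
\textbf{Verdict: there is a genuine gap.} Your arguments for (i), (ii), and for $v_iv_{i+1}\in F$ when $|B_i|=4$, match the paper. The proof of (iii), however, rests on a false structural claim, and the error carries into (iv).

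\textbf{Part (iii).} An $F$-free $b_i$-$a_i$ path $P$ need not have the form $b_iv_i\cdots v_{i+1}a_i$. It can leave $b_i$ into $V_i^r$ and return via $v_{i+1}a_i$, or it can go $b_iv_i$ and enter $a_i$ from $V_i^{\ell}$. In fact your form can never occur under the hypotheses. Its subpath $R=P[v_i,v_{i+1}]$ would be an $F$-free $v_i$-$v_{i+1}$ path. Together with the $F$-free $v_{i+1}$-$v_i$ path of $B_i$ (which exists because $F$ contains no horizontal cut), it would close an $F$-free cycle. So your lower-bound argument only treats an impossible case.

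What is true is that $P$ passes through $v_i$ or $v_{i+1}$: the out-neighbours of $b_i$ lie in $\{v_i\}\cup V_i^r$, and any path leaving $V_i^r$ goes through $v_{i+1}$. If $P$ passes through $v_i$, the cycle $P[v_i,a_i]\cup\{a_iv_i\}$ forces $a_iv_i\in F$. If it passes through $v_{i+1}$, the cycle $\{v_{i+1}b_i\}\cup P[b_i,v_{i+1}]$ forces $v_{i+1}b_i\in F$.

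The upper bound, which you flag as the main obstacle, is immediate and is not a minimality statement at all. The set $\{a_iv_i,a_ib_i,v_{i+1}b_i\}$ meets each of the three $v_{i+1}$-$v_i$ paths of $B_i$ and is inclusionwise minimal with this property. So it is itself a horizontal cut and cannot lie in $F$. Minimality alone could not give this: a minimal FAS that is allowed to contain horizontal cuts can contain all three arcs.

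\textbf{Part (iv).} Your plan relies on the ``$F$-free $v_k$-$v_{k+1}$ connections obtained in the proof of (iii)''. For the reason just given, these do not exist, so $Q$ cannot be rerouted into one $F$-free closed walk through $v_i$ and $v_j$. The working argument closes a cycle at the first opportunity and propagates along $Q$.
\begin{enumerate}
\item Write $Q\cap F=\{a_{k_1}b_{k_1},\ldots,a_{k_p}b_{k_p}\}$ in order along $Q$.
\item Since $Q[v_i,a_{k_1}]$ is $F$-free, $a_{k_1}v_{k_1}\notin F$ would close an $F$-free cycle with an $F$-free $v_{k_1}$-$v_i$ path from (i). So $a_{k_1}v_{k_1}\in F$, and hence $v_{k_1+1}b_{k_1}\notin F$ by (iii).
\item Next, $\{v_{k_1+1}b_{k_1}\}\cup Q[b_{k_1},a_{k_2}]$, together with an $F$-free $v_{k_2}$-$v_{k_1+1}$ path, forces $a_{k_2}v_{k_2}\in F$. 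Continue in the same way.
\item At the end, $\{v_{k_p+1}b_{k_p}\}\cup Q[b_{k_p},v_j]$ plus an $F$-free $v_j$-$v_{k_p+1}$ path gives an $F$-free cycle.
\end{enumerate}

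\textbf{Part (v), $|B_i|=3$.} The arc $v_{i+1}v_i$ is not the only $v_{i+1}$-$v_i$ path of $B_i$; there is also $v_{i+1}a_iv_i$. So you need the second half of Lemma \ref{lem:arc in FAS}: if $v_{i+1}v_i\in F$, then $v_{i+1}a_iv_i$ meets $F$, and $F$ contains a horizontal cut. Your plan to exclude $\{a_iv_i,v_{i+1}a_i\}\subseteq F$ by exhibiting a horizontal cut cannot succeed, because $v_{i+1}v_i$ is then an $F$-free $v_{i+1}$-$v_i$ path in $B_i$. Use minimality instead: Lemma \ref{lem:arc in FAS} yields $F$-free $v_i$-$a_i$ and $a_i$-$v_{i+1}$ paths, which together with $v_{i+1}v_i$ close an $F$-free cycle.

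\textbf{Part (v), $|B_i|=4$.} Your case analysis is only gestured at. The two steps it actually needs are these.
\begin{enumerate}
\item If $a_iv_i\in F$ and $a_ib_i\notin F$, the path $a_ib_iv_i$ forces $b_iv_i\in F$, giving the cut $\{a_iv_i,b_iv_i\}$.
\item The combination $a_ib_i,a_iv_i,v_{i+1}a_i\in F$ is impossible. Here $v_{i+1}b_i\notin F$ by (iii), and $b_iv_i\notin F$ because $\{a_iv_i,b_iv_i\}$ is a cut. So an $F$-free $a_i$-$v_{i+1}$ path followed by $v_{i+1}b_iv_i$ would be an $F$-free $a_i$-$v_i$ path, contradicting Lemma \ref{lem:arc in FAS}.
\end{enumerate}
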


{\bf Proof.} (i) Since $F$ contains no horizontal cut, there exists a $v_{k+1}$-$v_k$ path $P_k$ in $B_k$ that is disjoint from $F$ 
for $i \le k\le j-1$. Thus $\cup_{i \le k\le j-1} P_k$ is a $v_j$-$v_i$ path $P$ with $P\cap F=\emptyset$.

(ii) Assume the contrary: there exists a $v_i$-$v_j$ path $Q\in\mathcal{Q}$ with $Q\cap F=\emptyset$. If $i=j$, then $Q$ is a cycle with 
$Q\cap F=\emptyset$, contradicting the hypothesis that $F$ is an FAS. So we assume that $i<j$. By (i), there exists a $v_j$-$v_i$ path $P$ 
with $P\cap F=\emptyset$. Thus each cycle in $P\cup Q$ is disjoint from $F$, a contradiction again.
        
(iii) Let $P$ be a $b_i$-$a_i$ path with $P\cap F=\emptyset$; such $P$ is available by Lemma \ref{lem:arc in FAS}. From the global structure 
of $T$, we see that either $v_i$ or $v_{i+1}$ belongs to $P$. If $P$ passes through $v_i$, then $a_iv_i\in F$, for otherwise, 
$P[v_i,a_i] \cup \{a_iv_i\}$ would be a cycle disjoint from $F$, a contradiction. Similarly, if $P$ passes through $v_{i+1}$, 
then $v_{i+1}b_i\in F$. Thus $\{a_iv_i, v_{i+1}b_i\}\cap F\neq\emptyset$. Since $\{a_iv_i,a_ib_i,v_{i+1}b_i\}$ is a horizontal cut of $T$, 
exactly one of $a_iv_i$ and $v_{i+1}b_i$ belongs to $F$.

(iv) Assume the contrary: $Q \cap F \subseteq \{a_kb_k: 1\le k\le m-1\ {\rm and} \ |B_k|=4\}$.  We first consider the case when $|Q\cap F|=1$, say 
$Q\cap F=\{a_kb_k\}$ for some $i\le k\le j-1$. By (iii), either $a_kv_k\notin F$ or $v_{k+1}b_k\notin F$. 
If $a_kv_k\notin F$, in view of (i), letting $P$ be a $v_k$-$v_i$ path with $P\cap F=\emptyset$ (when $k=i$, $P$ is the singleton $v_i$), then 
$P\cup Q[v_i,a_k]\cup\{a_kv_k\}$ would contain a cycle disjoint with $F$, a contradiction. Similarly, we can reach a contradiction if $v_{k+1}b_k\notin F$.

So we assume that $|Q\cap F|=p>1$. Let $Q\cap F=\{a_{k_1}b_{k_1}, a_{k_2}b_{k_2},\ldots, a_{k_p}b_{k_p}\}$, where $i\le k_1<k_2<\ldots<k_p\le j-1$. 
Note that $Q$ passes $a_{k_1}b_{k_1}, a_{k_2}b_{k_2},\ldots, a_{k_p}b_{k_p}$ in order, given the global structure of $T$. 

By the same argument as used for the case when $|Q\cap F|=1$, we obtain $a_{k_1}v_{k_1}\in F$, which in turn implies from (iii) that 
$v_{k_1+1}b_{k_1}\notin F$. By (i), there exists a $v_{k_2}$-$v_{k_1+1}$ path $P$ with $P\cap F=\emptyset$ (when $k_2=k_1+1$, $P$ is 
a singleton). Observe that $a_{k_2}v_{k_2}\in F$, for otherwise, $\{v_{k_1+1}b_{k_1}\}\cup Q[b_{k_1},a_{k_2}]\cup\{a_{k_2}v_{k_2}\}\cup P$ would 
be a cycle disjoint with $F$. So $v_{k_2+1}b_{k_2}\notin F$ by (iii). Repeated application of the same argument yields  
$a_{k_p}v_{k_p}\in F$ and $v_{k_p+1}b_{k_p}\notin F$. By (i), there exists a $v_j$-$v_{k_p+1}$ path $P'$ with $P'\cap F=\emptyset$. Thus 
$\{v_{k_p+1}b_{k_p}\}\cup Q[b_{k_p},v_j]\cup P'$ would contain a cycle disjoint from $F$, this contradiction justifies (iv).

(v) When $|B_i|=2$, arc $v_{i+1}v_i$ forms a horizontal cut of $T$, so $v_{i+1}v_i \notin F$. 

When $|B_i|=3$, we have $v_{i+1}v_i\notin F$, for otherwise, at least one arc on the path $v_{i+1}a_iv_i$ is contained in $F$ by Lemma \ref{lem:arc in FAS}.
Thus $F$ contains a horizontal cut, $\{v_{i+1}v_i, a_iv_i\}$ or $\{v_{i+1}v_i, v_{i+1}a_i\}$, of $T$, contradicting the  
hypothesis on $F$. If both $a_iv_i$ and $v_{i+1}a_i$ belong to $F$, then Lemma \ref{lem:arc in FAS} guarantees the existence of a $v_i$-$a_i$ path 
$P_1$ and a $a_i$-$v_{i+1}$ path $P_2$ with $P_1\cap F=\emptyset$ and $P_2\cap F=\emptyset$. Thus every cycle contained in $P_1\cup 
P_2 \cup \{v_{i+1}v_i\}$ would be disjoint from $F$, a contradiction.

When $|B_i|=4$, we have $v_iv_{i+1}\in F$ by (ii). In what follows, we assume that the first case in the bulleted list does not occur; that is, $(A(B_i)\de\{v_iv_{i+1}\})\cap 
F\neq \emptyset$, and aim to show that one of the remaining two cases arises. 

We first assume that $a_ib_i\notin F$. If $a_iv_i\in F$, then $b_iv_i\in F$ by Lemma \ref{lem:arc in FAS} and using the path $a_ib_iv_i$. Thus
$\{a_iv_i, b_iv_i\}$ is a horizontal cut of $T$ contained in $F$, a contradiction.  So $a_iv_i\notin F$,
Similarly, we can show that $v_{i+1}b_i\notin F$. Thus exactly one of $b_iv_i$ and $v_{i+1}a_i$ belongs to $F$, since $(A(B_i)\de\{v_iv_{i+1}\})\cap F
\neq \emptyset$ and $F$ contains no horizontal cut. Therefore, the second case occurs.

We next assume that $a_ib_i\in F$. Now exactly one of $a_iv_i$ and $v_{i+1}b_i$ belongs to $F$ by (iii). Symmetry allows us to assume that 
$a_iv_i\in F$ and $v_{i+1}b_i\notin F$, which in turn implies $b_iv_i\notin F$, since $F$ contains no horizontal cut.   
If $v_{i+1}a_i\in F$, then there exists an $a_i$-$v_{i+1}$ path $P$ with $P\cap F=\emptyset$ by Lemma \ref{lem:arc in FAS}. Thus 
$P\cup \{v_{i+1}b_i,b_iv_i\}$ would be an $a_i$-$v_i$ path that is disjoint from $F$, contradicting Lemma \ref{lem:arc in FAS} (for $a_iv_i\in F$). 
Therefore, the third case occurs. \qed\\

Let $w$ be the weight function defined on the arc set $A$ of $T$ as specified in Theorem \ref{FAS}. To derive further properties
satisfied by FAS's of $T$ that contain no horizontal cuts, we construct a flow network $N$ from $(T, w)$ as follows:

\begin{itemize}

\vspace{-1mm}
\item Delete $v_{i+1}v_i$ for all $i$ with $|B_i|\le 3$, and delete $a_ib_i$ for all $i$ with $|B_i|=4$; 
\vspace{-1mm}
\item Split $v_i$ for all $1\le i \le m$ into two new vertices $s_i$ and $t_i$, replace each arc $e$ of $T$ with tail (resp. head) $v_i$ by a new arc $e'$ 
with tail $s_i$ (resp. with head $t_i$). Define the capacity $c(e')=w(e)$; and 
\vspace{-1mm}
\item Split each $u_i \in \{a_i, b_i\}$ for all $1\le i \le m-1$ into two new vertices $u_{i1}$ and $u_{i2}$, replace each arc $e$ of $T$ between $u_i$ and 
$(\cup_{t=1}^i A_t)\cup(\cup_{t=1}^{i-1} B_t)$ by a new arc $e'$ that enters (resp. leaves) $u_{i1}$ if $e$ enters (resp. leaves) 
$u_i$, and replace each arc $e$ of $T$ between $u_i$ and $(\cup_{t=i+1}^m A_t) \cup (\cup_{t=i+1}^{m-1} B_t)$ by a new arc $e'$ that enters (resp. leaves) 
$u_{i2}$ if $e$ enters (resp. leaves) $u_i$. Define the capacity $c(e')=w(e)$. Then add an arc $u_{i1}u_{i2}$ and define its capacity $c(u_{i1}u_{i2})=\infty$. 
Finally, add two arcs $a_{i1}b_{i1}$ and $a_{i2}b_{i2}$ and define their capacities $c(a_{i1}b_{i1})=c(a_{i2}b_{i2})=w(a_ib_i)$ for all $i$ with $|B_i|=4$.
\vspace{-1mm}
\end{itemize}

This construction is illustrated in Figure 4. Note that $u_i=a_i$ if $|B_i|=3$ (as $b_i$ is undefined now) and that the above splitting operation applies to 
every vertex in horizontal blocks of $T$ (but no other vertices). As a consequence, for example, the arc $v_1a_5$ in $T$ will become the arc $s_1a_{51}$ 
in $N$, and the arc $a_2u$ in $T$, with $u \in V(A_6)\de v_6$, will become the arc $a_{22}u$ in $N$. Clearly, $N$ can be constructed in $O(n^2)$ time, where 
$n=|V|$.     

Let $V'$ be the vertex set of $N$, let $A'$ consist of all $e'$, all $a_{i1}b_{i1}$ and $a_{i2}b_{i2}$, and let $A''$ consist of all $u_{i1}u_{i2}$.
Then $N=(V', A' \cup A'')$. For convenience, we define a surjection $\eta: A' \rightarrow A$ by $\eta(e')=e$ and $\eta(a_{i1}b_{i1})=\eta(a_{i2}b_{i2})=a_ib_i$.
Let $\Lambda:=\{s_1,s_2, \ldots, s_m\}$ and $\Pi:=\{t_1, t_2, \ldots, t_m\}$. We view $\Lambda$ (resp. $\Pi$) as the source (resp. sink) set of $N$, and call the 
quadruple $(N, \Lambda, \Pi, c)$ (or simply the pair $(N, c)$) the {\em corresponding network} of $(T,w)$.  As usual, a $(\Lambda, \Pi)$-{\em cut} in $N$ is a 
subset $K$ of arcs that contains at least one arc from each path connecting $\Lambda$ and $\Pi$, whose capacity is defined to be $c(K):=\sum_{a\in K} c(a)$. A 
$(\Lambda, \Pi)$-{\em flow} in $N$ is a function $f: A' \cup A'' \rightarrow \mathbb{R}_+$ such that 

$\bullet$ $0\le f(a) \le c(a)$ for each arc $a \in A$;

$\bullet$ ${\rm div}_f(v)=0$ if $v \in V'\de (\Lambda \cup \Pi)$, ${\rm div}_f(v)\ge 0$ if $v \in \Lambda$, and ${\rm div}_f(v)\le 0$ if $v \in \Pi$, 

\noindent where ${\rm div}_f(v) = f(\delta^+(v))- f(\delta^-(v))$, and $\delta^+(v)$ (resp. $\delta^-(v)$) is the set of all arcs leaving (resp. entering) $v$
in $N$. For each $U \subseteq V'$, set ${\rm div}_f(U):=\sum_{v\in U} {\rm div}_f(v)$. It is easy to see that ${\rm div}_f(\Lambda)=-{\rm div}_f(\Pi)$; this 
quantity is called the {\em value} of $f$, denoted by ${\rm val}(f)$. When $\Lambda=\{s_1\}$ and $\Pi=\{t_1\}$, a $(\Lambda, \Pi)$-cut is often called an $s_1$-$t_1$ 
{\em cut}, and a $(\Lambda, \Pi)$-flow is often called an $s_1$-$t_1$ flow. 

\vskip 4mm
\begin{figure}[htpb]
\centerline{\includegraphics[width=10cm]{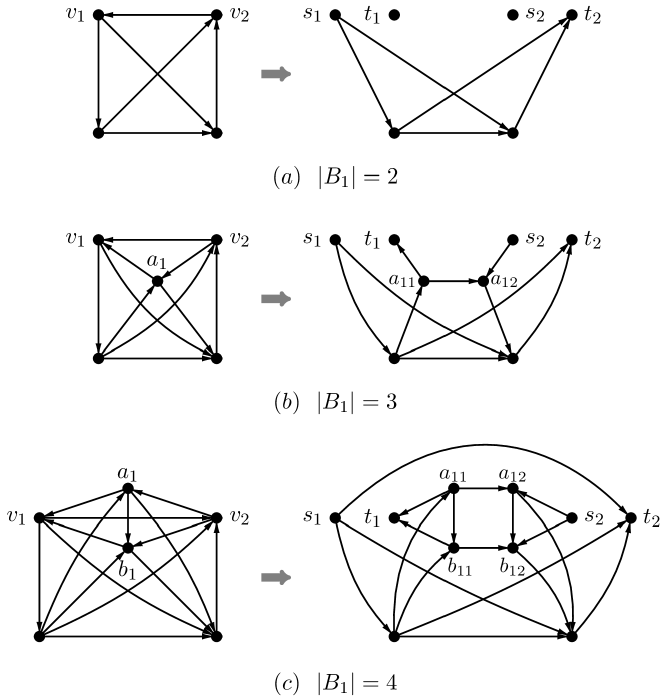}}
\caption{Network Construction}
\end{figure}

The above definition of $(\Lambda, \Pi)$-flow is given in the arc-vertex form. An equivalent definition is formulated in the following path packing form: 
Let ${\cal P}$ be the set of all simple paths in $N$ from $\Lambda$ to $\Pi$. An $(\Lambda, \Pi)$-flow is an assignment $g:\, {\cal P}\rightarrow \mathbb R_+$ 
such that $\sum_{e \in P \in {\cal P}}\, g(P) \le  c(e)$ for all arcs $e$, whose value is $\sum_{P \in {\cal P}}\, g(P)$. We can view $g(P)$ as the multiplicity
of $P$, and shall use both forms in our proofs.

By the max-flow min-cut theorem, the maximum value of a $(\Lambda, \Pi)$-flow is equal to the minimum capacity of a $(\Lambda, \Pi)$-cut in  
$N$. Let us exhibit some further properties satisfied by $N$. 

\begin{lemma}\label{lem:paths in tildeT}
There is no $s_j$-$t_i$ path in $N$ for any $1\le i<j\le m$.
\end{lemma}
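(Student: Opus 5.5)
Suppose, for contradiction, that $N$ contains an $s_j$-$t_i$ path $P'$ with $i<j$. We may take $P'$ minimal, so that it meets $\Lambda\cup\Pi$ only at its two ends; otherwise we pass to a subpath between a suitable $s_{j'}$ and $t_{i'}$ with $i'<j'$. The plan is to lift $P'$ back to $T$ and then contradict the structure of $T$ recorded in Lemma~\ref{lem:obs on cycles}.

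\emph{Lifting.} Apply $\eta$ to the arcs of $P'$ in $A'$ and contract the arcs $u_{k1}u_{k2}$ in $A''$. Since no intermediate vertex of $P'$ is a split join vertex, this yields a $v_j$-$v_i$ walk $W$ in $T$ that has no internal join vertex. Consecutive arcs such as $a_{k1}b_{k1}$ followed by $b_{k1}b_{k2}$ lift to single arcs or walk pieces, and a walk can be shortcut to a path without introducing new vertices. This gives a $v_j$-$v_i$ path $P$ in $T$ that avoids $v_{i+1},\dots,v_{j-1}$ and uses no deleted arc $v_{k+1}v_k$ with $|B_k|\le 3$.

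\emph{Contradiction via Lemma~\ref{lem:obs on cycles}(iii).} By that lemma, $P$ lies in $B_{j-1}\cup\cdots\cup B_i$ and passes through $v_j,v_{j-1},\dots,v_i$ in order. If $j>i+1$, then $P$ contains the internal join vertex $v_{j-1}$, a contradiction. Hence $j=i+1$ and $P$ is a $v_{i+1}$-$v_i$ path inside $B_i$.

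\emph{Ruling out $j=i+1$: this is the main obstacle.} The difficulty is that in $B_i$ such paths plainly exist in $T$, for example $v_{i+1}a_iv_i$. So the real content lies in the splitting of $a_i$ and $b_i$ into a left copy $u_{i1}$ and a right copy $u_{i2}$, joined only by the arc $u_{i1}u_{i2}$ directed from left to right.

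First, the arc leaving $s_{i+1}$ must be a replacement arc $e'$ with tail $v_{i+1}$, and $v_{i+1}$ lies in $(\cup_{t\ge i+1}A_t)\cup\cdots$. So an arc from $v_{i+1}$ to $u_i\in\{a_i,b_i\}$ enters the right copy $u_{i2}$. Symmetrically, an arc from $u_i$ to $v_i$ leaves the left copy $u_{i1}$.

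Second, inside the horizontal part of $N$, the only arcs leading from right copies back to left copies would have to come from $a_{i2}b_{i2}$ or from $a_ib_i$. These arcs stay within one side, and the original $a_ib_i$ was deleted. Moreover $u_{i1}u_{i2}$ only goes left to right. Therefore, once $P'$ is at a right copy, it can only reach right copies and vertices to the right. It can never reach $t_i$ through an arc into $v_i$, since such arcs originate at left copies or at vertices of the left side.

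The direct arc $v_{i+1}v_i$ is deleted when $|B_i|\le 3$. When $|B_i|=4$, the block orientation has $v_iv_{i+1}$, as used in Lemma~\ref{lem:FAS with no horizontal cut}(v), so there is no direct arc from $v_{i+1}$ to $v_i$. Finally, the remaining arcs of $T$ leaving vertices on the right side of the horizontal cut go further right, not back. Combining these facts gives the required contradiction, and I would write this last step out carefully case by case in $|B_i|\in\{2,3,4\}$.
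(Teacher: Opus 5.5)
Your proof is correct, but it takes a longer route than the paper's. The paper works entirely inside $N$. It puts into $X$ all vertices of $N$ arising from $(\cup_{t\le j-1}A_t)\cup(\cup_{t\le j-2}B_t)$, together with the left copies $a_{(j-1)1},b_{(j-1)1}$. Everything else, including $a_{(j-1)2},b_{(j-1)2}$, goes into $Y$. It then reads off from Theorem~\ref{structure} that every arc between $X$ and $Y$ goes from $X$ to $Y$:
\begin{itemize}
\item the only backward arc of $T$, namely $v_jv_{j-1}$ when $|B_{j-1}|\le 3$, was deleted;
\item $v_ja_{j-1}$ and $v_jb_{j-1}$ become arcs inside $Y$;
\item $a_{j-1}v_{j-1}$ and $b_{j-1}v_{j-1}$ become arcs inside $X$.
\end{itemize}
Since $t_i\in X$ and $s_j\in Y$ for every $i<j$, this one dicut settles all pairs at once. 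It needs no lifting to $T$ and no appeal to Lemma~\ref{lem:obs on cycles}(iii).

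Your last step is exactly this dicut observation, localized at $B_i$. If you state it at $B_{j-1}$ instead, it already proves the lemma for arbitrary $i<j$. That makes your lifting, the walk shortcutting, and the reduction to $j=i+1$ unnecessary.

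What your detour does buy is a very concrete final check, which you should write out rather than announce as a case analysis. Once the lift lies in $B_i$, the path $P'$ lives on $\{s_{i+1},a_{i1},a_{i2},b_{i1},b_{i2},t_i\}$. From $s_{i+1}$ you can only enter $a_{i2}$ or $b_{i2}$, because $v_{i+1}v_i$ is either deleted or reversed. The only arc leaving these two vertices inside this set is $a_{i2}b_{i2}$, so $t_i$ is unreachable.

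Finally, your minimality step is vacuous. In $N$ every $s_k$ has in-degree $0$ and every $t_k$ has out-degree $0$, so no path can meet $\Lambda\cup\Pi$ at an internal vertex.
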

\vspace{-1mm}

{\bf Proof.} Let $R_j$ be the set of all arcs between $(\cup_{t=1}^{j-1} A_t) \cup (\cup_{t=1}^{j-2} B_t)$ and $(\cup_{t=j}^m A_t) \cup (\cup_{t=j}^{m-1} B_t)$ 
in the tournament $T$. By Theorem \ref{structure}, all arcs in $R_j$ are directed from the former to the latter, except $v_jv_{j-1}$ when $|B_{j-1}|\le 3$,
which has been deleted in the construction of $N$. Let $R_j':=R_j\de \{v_jv_{j-1}\}$ if $|B_{j-1}|\le 3$ and $R_j':=R_j$ otherwise, and let 
$Z=\{e': e\in R_j'\} \cup \{u_{(j-1)1}u_{(j-1)2}: u_{j-1}=a_{j-1}\,\, or \,\, b_{j-1}\}$.  Then $Z$ forms a dicut $(X,Y)$ in $N$ with $t_i \in X$ and $s_j \in Y$.  
Hence $N$ contains no $s_j$-$t_i$ path. \qed\\

The following lemma establishes a correspondence between some paths in $N$ and some in $T$.

\begin{lemma}\label{lem:btw paths}
For any $s_i$-$t_j$ path $P$ in $N$ with $i\le j$, the arcs in $\{\eta(a): a\in P \cap A'\}$ form a $v_i$-$v_j$ path $Q$ in $\mathcal{Q}$. Conversely,
for any $v_i$-$v_j$ path $Q$ in $\mathcal{Q}$ with $i\le j$, there exists an $s_i$-$t_j$ path $P$ in $N$, such that the arcs in $\{\eta(a): a\in P \cap A' \}$ form $Q$.
\end{lemma}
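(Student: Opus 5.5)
The plan is to define a projection map from $N$ back to $T$ and an explicit lifting map from $T$ to $N$, and to check both directions directly from the construction of $N$. The projection is the map that collapses each split vertex: it sends $s_i,t_i\mapsto v_i$, $u_{i1},u_{i2}\mapsto u_i$, and fixes all other vertices. Under this map, every arc $a\in A'$ goes to $\eta(a)$, and every arc of $A''$ (that is, every $u_{i1}u_{i2}$) collapses to a single vertex.

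\emph{Forward direction.} Let $P$ be an $s_i$-$t_j$ path in $N$ with $i\le j$.
\begin{enumerate}
\item Since $s_k$ has no entering arcs and $t_k$ has no leaving arcs in $N$, the internal vertices of $P$ are non-join vertices. Hence the projection of $P$ is a walk $W$ in $T$ from $v_i$ to $v_j$, using the arcs $\eta(a)$ for $a\in P\cap A'$ in order, with no join vertex in its interior.
\item We show $W$ is a path, i.e.\ it repeats no vertex. Vertices of $A_k\setminus v_k$ are not split, so they appear at most once. A horizontal vertex $u_k$ could be repeated only if $P$ visits both $u_{k1}$ and $u_{k2}$ at separate, non-consecutive times. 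We rule this out by the left/right structure of Lemma~\ref{lem:obs on cycles}(i) together with the dicuts $Z$ from the proof of Lemma~\ref{lem:paths in tildeT}. Every arc of $N$ that crosses such a ``vertical line'' goes rightward. The only arcs of $N$ joining the two sides of $u_k$ are $u_{k1}u_{k2}$ and $a_{k1}b_{k1}$, $a_{k2}b_{k2}$. Since $P$ cannot return leftward after reaching $u_{k2}$, any visit to both copies must pass from $u_{k1}$ to $u_{k2}$ consecutively via $u_{k1}u_{k2}$, or through a detour that lies on one side only.
\item The case $|B_k|=4$ needs separate care. There $P$ might enter $a_{k1}$, traverse $a_{k1}b_{k1}$, and later use $u_{k1}u_{k2}$. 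We check that $b_{k1}$ and $a_{k2}$ are placed so that $P$ revisiting $a$ would require a leftward arc, which is a contradiction.
\item For $i=j$, the walk $W$ is closed, so it is a cycle through $v_i$.
\end{enumerate}
Hence $Q\in\mathcal{Q}$.

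\emph{Converse direction.} Let $Q\in\mathcal{Q}$ be a $v_i$-$v_j$ path. The lift is built as follows: replace the first vertex by $s_i$ and the last by $t_j$, and replace each arc by its copy $e'$. At each horizontal vertex $u_k$ on $Q$, the entering copy lands in either $u_{k1}$ or $u_{k2}$, depending on whether the predecessor lies left or right of $u_k$; the leaving copy behaves likewise. If we enter at $u_{k1}$ and leave from $u_{k2}$, we insert $u_{k1}u_{k2}$.

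Two obstacles must be excluded.
\begin{enumerate}
\item \emph{Entering at $u_{k2}$ and leaving from $u_{k1}$.} This would mean $Q$ comes from the right and goes to the left at $u_k$. By Lemma~\ref{lem:obs on cycles}(iii) and (iv) this is impossible inside a path of $\mathcal{Q}$, since it would need a backward move through $B_k$ avoiding join vertices.
\item \emph{Use of a deleted arc.} Here $Q$ uses $v_{k+1}v_k$, which is impossible because $Q$ has no interior join vertex (for $i=j$ this would force $Q$ to be a 2-cycle, which is absurd). Alternatively $Q$ uses $a_kb_k$; then we take the copy $a_{k1}b_{k1}$ or $a_{k2}b_{k2}$ according to the sides on which $Q$ enters $a_k$ and leaves $b_k$, and check that such a consistent choice exists.
\end{enumerate}

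The main obstacle is this case analysis for $|B_k|=4$, i.e.\ the handling of $a_kb_k$ in both directions. I would resolve it first by listing, for the two non-join vertices of each horizontal block in Figure~3, exactly which neighbours lie ``left'' and which lie ``right'', and then verifying the claims above case by case.
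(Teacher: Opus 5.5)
Your proposal is correct and follows the paper's route. For the forward direction you project an $s_i$-$t_j$ path back to $T$ by merging split copies, using that $s_k$ has no entering and $t_k$ no leaving arcs (the paper contracts the arcs of $P\cap A''$). For the converse you lift $Q$ arc by arc, inserting $u_{k1}u_{k2}$ where needed.

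Two of your steps make explicit what the paper leaves implicit. First, your dicut argument shows $P$ never visits both copies of a horizontal vertex non-consecutively. Second, you insert $u_{k1}u_{k2}$, and choose $a_{k1}b_{k1}$ versus $a_{k2}b_{k2}$, according to the sides on which $Q$ enters and leaves. The paper's literal recipe instead adds these arcs for every $i\le k\le j$ with $a_k$, or $a_kb_k$, on $Q$. That leaves a dangling arc $a_{j1}a_{j2}$ for a path ending in $a_jv_j$, which your side-dependent rule avoids.
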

\vspace{-1mm}

{\bf Proof.} Since each join vertex of $T$ has been split into a source and a sink in $N$, no internal vertex of $P$ is obtained by splitting a join vertex of
$T$. Let $P'$ arise from $P$ by contracting all arcs in $P \cap A''$. Then $P'$ remains an $s_i$-$t_j$ path. It follows that arcs in $\{\eta(a): a\in P'\} 
=\{\eta(a): a\in P \cap A'\}$ form a $v_i$-$v_j$ path $Q$ in $T$ that does not contain any join vertex other than $v_i$ and $v_j$. By definition, 
$Q\in \mathcal{Q}$.

Conversely, let $R$ consist of all arcs of $Q$ outside the set $\{a_kb_k: 1\le k\le m-1\ {\rm with}\ |B_k|=4\}$. Then there is a unique arc $e'$ of $N$ with $\eta(e')=e$ 
for each $e\in R$; set $R':=\{e': e\in R\}$. For $i\le k\le j$, if $|B_k|=3$ and $a_k\in V(Q)$, add $a_{k1}a_{k2}$ to $R'$;  if $|B_k|=4$ and $a_kb_k \in Q$, add both $a_{k,1}b_{k1}$
and $b_{k1}b_{k2}$ to $R'$;  if $|B_k|=4$, $a_kb_k \notin Q$, and $u_k \in \{a_k,b_k\} \cap V(Q)$, add $u_{k1}u_{k2}$ to $R'$. From the construction of $N$, we
see that the arcs in the resulting $R'$ form an $s_i$-$t_j$ path $P$ in $N$, such that the arcs in $\{\eta(a): a\in P \cap A' \}$ form $Q$.  \qed\\

The following three lemmas relate $(\Lambda, \Pi)$-cuts in $N$ to FAS's in $T$.

\begin{lemma}\label{lem:cut to FAS}
Let $Z$ be an inclusionwise minimal $(\Lambda, \Pi)$-cut in $N$ with $Z\cap A''=\emptyset$, and let $F:=\{\eta(a):\ a\in Z\}$. Then $F$ is an FAS in $T$ with $w(F)\le c(Z)$. 
Furthermore, if $F$ contains no horizontal cut of $T$, then $w(F)=c(Z)$.
\end{lemma}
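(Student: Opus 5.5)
We first show that $F$ is an FAS by transferring cycles of $T$ to $\Lambda$-$\Pi$ paths of $N$. Since $T\in{\cal G}^*$, each $A_i\de v_i$ is acyclic. Each horizontal block contributes at most the non-join vertices $a_i$ (and $b_i$), which span at most the single arc $a_ib_i$. Arcs between different blocks not drawn in Figure 3 go from left to right. Together with Lemma \ref{lem:obs on cycles}(ii), this shows that every cycle $C$ of $T$ passes through a join vertex. Let $v_i$ and $v_j$ be the join vertices on $C$ with smallest and largest subscript.

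If $i=j$, then $C$ itself is a member of $\mathcal{Q}$. If $i<j$, then $C[v_i,v_j]$ contains no join vertex other than $v_i,v_j$ by Lemma \ref{lem:obs on cycles}(iv), so it lies in $\mathcal{Q}$. In either case Lemma \ref{lem:btw paths} gives an $s_i$-$t_j$ path $P$ in $N$ whose $A'$-arcs map under $\eta$ onto this path of $C$. As $Z$ is a $(\Lambda,\Pi)$-cut, $Z\cap P\ne\emptyset$. Since $Z\cap A''=\emptyset$, such an arc $a$ lies in $A'$, and $\eta(a)\in F\cap C$. Hence $F$ meets every cycle.

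For the weight inequality, note that $c(a)=w(\eta(a))$ for all $a\in A'$, and that $\eta$ is injective except that $a_{i1}b_{i1}$ and $a_{i2}b_{i2}$ both map to $a_ib_i$ when $|B_i|=4$. Therefore $w(F)\le\sum_{a\in Z}c(a)=c(Z)$. Equality fails exactly when $Z$ contains both $a_{i1}b_{i1}$ and $a_{i2}b_{i2}$ for some $i$ with $w(a_ib_i)>0$. The second statement therefore reduces to the following claim: if $\{a_{i1}b_{i1},a_{i2}b_{i2}\}\subseteq Z$, then $F$ contains a horizontal cut of $T$.

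To prove the claim, use minimality of $Z$. There is a $\Lambda$-$\Pi$ path $P_1$ with $P_1\cap Z=\{a_{i1}b_{i1}\}$, and a path $P_2$ with $P_2\cap Z=\{a_{i2}b_{i2}\}$. Now trace the possible continuations in $N$.

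For $P_1$: after $b_{i1}$ it either uses the infinite arc $b_{i1}b_{i2}$ toward the right part of $N$, or goes to the left part, for instance via the image of $b_iv_i$ into $t_i$.

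For $P_2$: before $a_{i2}$ it arrives either through the infinite arc $a_{i1}a_{i2}$ or from the right part, for instance via the image of $v_{i+1}a_i$ out of $s_{i+1}$.

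Splice pieces of $P_1$ and $P_2$ with the uncuttable arcs $a_{i1}a_{i2}$ and $b_{i1}b_{i2}$. Each combination either yields a $\Lambda$-$\Pi$ path avoiding $Z$, which is impossible, or forces particular arcs of $B_i$ into $Z$. Using also Lemma \ref{lem:paths in tildeT}, which forbids returning from $s_j$ to $t_{i'}$ with $i'<j$, the expected conclusion is that $Z$ contains the images of $a_iv_i$ and $v_{i+1}b_i$, or of $b_iv_i$ and $v_{i+1}a_i$ (possibly along with $a_ib_i$). Such a set together with $a_ib_i$ contains a $v_{i+1}$-$v_i$ cut of $B_i$, and hence a horizontal cut inside $F$.

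This final case analysis on how $P_1$ and $P_2$ leave $b_{i1}$ and enter $a_{i2}$ is the main obstacle. I would organize it by the four possibilities (right/left exit of $P_1$, and arrival of $P_2$ via $a_{i1}$ or from the right), and in each case exhibit either a $Z$-free $\Lambda$-$\Pi$ path or the required cut arcs of $B_i$.
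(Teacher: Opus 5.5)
Your handling of the first statement is correct and agrees with the paper: every cycle meets a join vertex, its lower segment lies in $\mathcal{Q}$, you lift it by Lemma~\ref{lem:btw paths}, and you use $Z\cap A''=\emptyset$. The bound $w(F)\le c(Z)$ is also correct, as is the reduction of the equality to the claim that $\{a_{i1}b_{i1},a_{i2}b_{i2}\}\subseteq Z$ forces a horizontal cut into $F$.

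That claim is the entire content of the second statement, and you do not prove it. You state an ``expected conclusion'' hedged between two different arc pairs and defer a four-way case analysis. Moreover, the case analysis you set up looks at the wrong halves of the paths. Suppose $P_1$ leaves $b_{i1}$ by $b_{i1}t_i$ and $P_2$ enters $a_{i2}$ by $s_{i+1}a_{i2}$. These pieces only certify that two arcs are \emph{not} in $Z$, so they force nothing into $Z$. They also cannot be joined to each other through $a_{i1}a_{i2}$ or $b_{i1}b_{i2}$. So an analysis organized around how $P_1$ exits $b_{i1}$ and how $P_2$ arrives at $a_{i2}$ does not close.

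The missing step is short and needs no cases; it is exactly the paper's argument.

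\begin{itemize}
\item Since $P_1\cap Z=\{a_{i1}b_{i1}\}$, the initial segment of $P_1$ from its source to $a_{i1}$ avoids $Z$. Appending $a_{i1}t_i$ (the image of $a_iv_i$) gives a $\Lambda$-$\Pi$ path; $t_i$ is a sink with no out-arcs, so it is not already on that segment. Hence $a_{i1}t_i\in Z$.
\item Symmetrically, the final segment of $P_2$ from $b_{i2}$ to its sink avoids $Z$. Prepending $s_{i+1}b_{i2}$ (the image of $v_{i+1}b_i$) forces $s_{i+1}b_{i2}\in Z$.
\end{itemize}

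Therefore $F\supseteq\{a_iv_i,a_ib_i,v_{i+1}b_i\}$, which is itself an inclusionwise minimal $v_{i+1}$-$v_i$ cut of $B_i$, i.e.\ a horizontal cut. This is always your first alternative. Neither Lemma~\ref{lem:paths in tildeT} nor the pair $\{b_iv_i,v_{i+1}a_i\}$ plays any role.
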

\vspace{-1mm}

{\bf Proof.}  From the definitions of the surjection $\eta: A' \rightarrow A$ and arc capacities in $N$, it follows instantly that  
$w(F)\le \sum_{a\in Z} c(\eta(a))=c(Z)$. To prove that $F$ is an FAS of $T$, assume the contrary: $C\cap F=\emptyset$ for some cycle $C$ in $T$. Let 
$v_i$ (resp. $v_j$) be the join vertex on $C$ with the smallest (resp. largest) subscript and let $Q:=C[v_i, v_j]$ if $i<j$ and $Q:=C$ otherwise. 
By Lemma \ref{lem:obs on cycles}(iv), we have $Q \in \mathcal{Q}$. By Lemma \ref{lem:btw paths}, there exists an $s_i$-$t_j$ path $P$ in $N$ with 
$\{\eta(a): a\in P \cap A'\}=Q$. Since $Q \cap F=\emptyset$, we have  $\{\eta(a): a\in P \cap A'\} \cap \{\eta(a):\ a\in Z\} =\emptyset$, so
$(P \cap A')\cap Z=\emptyset$. Since $Z\cap A''=\emptyset$, we obtain $(P \cap A'')\cap Z=\emptyset$. Hence $P\cap Z=\emptyset$, contradicting 
the hypothesis that $Z$ is a $(\Lambda, \Pi)$-cut in $N$.

Assume that $F$ contains no horizontal cut of $T$. If $Z$ does not contain $\{a_{k1}b_{k1}, a_{k2}b_{k2}\}$ for any $1\le k\le m-1$, then there is 
a bijection between $Z$ and $F$ and hence $w(F)=c(Z)$. So we further assume that $\{a_{k1}b_{k1}, a_{k2}b_{k2}\} \subseteq Z$ for some $1\le k\le m-1$.
Since $Z$ is an inclusionwise minimal $(\Lambda, \Pi)$-cut in $N$, there exists an $s_p$-$a_{k1}$ path $P_1$ and a $b_{k2}$-$t_q$ path $P_2$ in $N$, such that 
$P_1\cap Z=\emptyset=P_2\cap Z$ for some $s_p\in \Lambda$ and $t_q\in \Pi$. Thus $\{a_{k1}t_k, s_{k+1}b_{k2}\} \subseteq Z$, 
because both $P_1\cup\{a_{k1}t_k\}$ and $P_2\cup\{s_{k+1}b_{k2}\}$ are paths from $\Lambda$ to $\Pi$ and hence must intersect $Z$. Therefore
the horizontal cut $\{a_kv_k, a_kb_k, v_{k+1}b_k\}$ of $T$ is fully contained in $F$; this contradiction completes the proof. \qed

\vskip 4mm

\begin{lemma}\label{lem:FAS to cut}
Let $F$ be an inclusionwise minimal FAS in $T$ containing no horizontal cut. Then there exists a $(\Lambda, \Pi)$-cut $Z$ in $N$ such that $F=\{\eta(a): a\in Z\}$ and $w(F)=c(Z)$.
\end{lemma}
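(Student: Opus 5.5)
We define $Z$ explicitly from $F$ by lifting arcs through $\eta$, with a special rule for the arcs $a_kb_k$. For every $e\in F$ that is not of the form $a_kb_k$ (with $|B_k|=4$), there is a unique $e'\in A'$ with $\eta(e')=e$, and we put $e'$ into $Z$. Arcs of $T$ deleted in the construction of $N$ need attention: by Lemma \ref{lem:FAS with no horizontal cut}(v), $v_{k+1}v_k\notin F$ whenever $|B_k|\le 3$, so no arc of $F$ is lost this way. For $a_kb_k\in F$, Lemma \ref{lem:FAS with no horizontal cut}(iii),(v) says that exactly one of $a_kv_k$ and $v_{k+1}b_k$ lies in $F$, while $b_kv_k,v_{k+1}a_k\notin F$. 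We add exactly one copy of $a_kb_k$: if $a_kv_k\in F$ we add $a_{k2}b_{k2}$, and if $v_{k+1}b_k\in F$ we add $a_{k1}b_{k1}$. The choice is the copy on the side \emph{not} already blocked. With this rule $\eta$ restricted to $Z$ is a bijection onto $F$, so $F=\{\eta(a):a\in Z\}$ and $c(Z)=w(F)$ follow immediately.

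The substantive part is showing that $Z$ is a $(\Lambda,\Pi)$-cut. Take any path $P$ in $N$ from $\Lambda$ to $\Pi$, say from $s_i$ to $t_j$. Lemma \ref{lem:paths in tildeT} gives $i\le j$, and Lemma \ref{lem:btw paths} shows that $\eta(P\cap A')$ forms a path $Q\in\mathcal{Q}$. By Lemma \ref{lem:FAS with no horizontal cut}(iv), or (ii) when $i=j$, $Q$ contains an arc $e\in F$. If $e$ is not of the form $a_kb_k$, its unique preimage $e'$ lies on $P$ and also in $Z$, and we are done.

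So assume the only arcs of $Q\cap F$ are of the form $a_kb_k$. When $i<j$ this is ruled out by (iv). When $i=j$, $Q$ is a cycle through $v_i$, and this case needs a small argument. Such a cycle uses $a_kb_k$ only when $k\in\{i-1,i\}$. If $k=i$, the cycle runs $v_i\to a_i\to b_i\to v_i$ using $a_ib_i$, the copy $a_{i1}b_{i1}$ in $N$, or goes out through the side-$1$ vertices. Suppose $a_ib_i\in F$ but $a_iv_i$ and $b_iv_i$ are handled as in (v). If $v_{i+1}b_i\in F$, we placed $a_{i1}b_{i1}$ in $Z$, and $P$ must use precisely that copy, since $P$ stays on side $1$ (it never passes through $v_{i+1}$). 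If instead $a_iv_i\in F$, we argue that any cycle $v_i\ldots a_i b_i\ldots v_i$ in $T$ avoiding other $F$-arcs would make the path from $b_i$ to $v_i$ avoid $F$. We then derive a contradiction with Lemma \ref{lem:arc in FAS} applied to $a_iv_i$, or else with $F$ being an FAS, by combining with an $F$-free $v_i$-$a_i$ path segment of $Q$. The case $k=i-1$ is symmetric via $v_i=v_{(i-1)+1}$ and side $2$.

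I expect this $i=j$ bookkeeping, matching which copy of $a_kb_k$ the path $P$ traverses to the choice made in defining $Z$, to be the main obstacle. I would handle it by first proving a general fact: an $s_i$-$t_j$ path meets $a_{k1}b_{k1}$ only if $k\ge i$ and it enters from the left of $B_k$, and meets $a_{k2}b_{k2}$ only if $k+1\le j$. I would then run the same chain-of-cycles argument as in the proof of (iv) to show that the copy $P$ uses is the one in $Z$, or that $P$ meets another arc of $Z$. Once every $\Lambda$-$\Pi$ path is shown to meet $Z$, the lemma follows.
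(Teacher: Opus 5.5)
\textbf{The construction of $Z$ has a genuine gap: your rule for which copy of $a_kb_k$ goes into $Z$ is reversed.} With your rule, $Z$ is in general not a $(\Lambda,\Pi)$-cut.

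Suppose $a_kb_k,a_kv_k\in F$, so $b_kv_k,v_{k+1}b_k\notin F$. By Lemma~\ref{lem:arc in FAS} there is an $F$-free $v_k$-$a_k$ path. It must enter $a_k$ from $V_k^{\ell}$, since entering from $v_{k+1}$ would close an $F$-free cycle with $v_{k+1}b_kv_k$. Appending $a_kb_k,b_kv_k$ gives a cycle whose only $F$-arc is $a_kb_k$. The lift of its lower segment to $N$ must end with $a_{k1}b_{k1},b_{k1}t_k$, because $t_k$ is unreachable from $b_{k2}$. So the only arc of this $\Lambda$-$\Pi$ path mapped into $F$ is $a_{k1}b_{k1}$, which you omit in favour of $a_{k2}b_{k2}$. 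The detour $a_{k1}\to b_{k1}\to t_k$ bypasses the blocked arc $a_{k1}t_k$, so the copy of $a_kb_k$ is needed on the side that is \emph{already} blocked.

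Here is a concrete counterexample. Take $m=2$, $A_1=\{v_1,x\}$ with arc $v_1x$, $A_2=\{v_2,z\}$ with arc $zv_2$, $|B_1|=4$, and all other arcs directed left to right. Let $F=\{a_1v_1,a_1b_1,v_1v_2,xb_1,xv_2,zv_2\}$.
\begin{itemize}
\item $F$ is an FAS: $T\setminus F$ has the topological order $v_2,b_1,v_1,x,a_1,z$.
\item $F$ is inclusionwise minimal: the cycles $v_1xa_1v_1$, $v_1xa_1b_1v_1$, $v_1v_2b_1v_1$, $v_1xb_1v_1$, $v_1xv_2b_1v_1$, $v_2b_1zv_2$ each meet $F$ in exactly one arc, one for each arc of $F$.
\item $F$ contains no horizontal cut, since $v_2b_1v_1$ avoids $F$.
\end{itemize}
Your $Z=\{a_{11}t_1,s_1t_2,xb_{11},xt_2,zt_2,a_{12}b_{12}\}$ misses the $s_1$-$t_1$ path $s_1x,\,xa_{11},\,a_{11}b_{11},\,b_{11}t_1$.

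The contradiction you hope to derive in the case $a_iv_i\in F$ does not exist. In the example, the cycle $v_1xa_1b_1v_1$ is hit by $F$ at $a_1b_1$. The $a_1$-$v_1$ path $a_1b_1v_1$ also meets $F$ at $a_1b_1$, exactly as Lemma~\ref{lem:arc in FAS} requires.

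\textbf{The fix is the paper's rule:} put $a_{k1}b_{k1}$ into $Z$ when $a_kv_k\in F$, and $a_{k2}b_{k2}$ when $v_{k+1}b_k\in F$. Bijectivity of $\eta$ on $Z$ and $c(Z)=w(F)$ are unaffected. The cut property then goes through as follows.
\begin{itemize}
\item For $P=R\cup\{a_{i1}b_{i1},b_{i1}t_i\}$ whose associated cycle meets $F$ only in $a_ib_i$, the path $R\cup\{a_{i1}t_i\}$ lifts a cycle in $\mathcal{Q}$. That cycle must meet $F$, which forces $a_iv_i\in F$ and hence $a_{i1}b_{i1}\in Z$.
\item The mirror case, a path starting $s_{k+1}a_{k2},a_{k2}b_{k2}$, is handled by replacing those two arcs with $s_{k+1}b_{k2}$.
\item A path through both $a_{(i-1)2}b_{(i-1)2}$ and $a_{i1}b_{i1}$ is handled by rerouting one end in the same way and applying the previous case.
\end{itemize}
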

\vspace{-1mm}

{\bf Proof.} Since $F$ contains no horizontal cut of $T$, $v_{i+1}v_i\notin F$ for all $i$ with $|B_i|\le 3$ by Lemma \ref{lem:FAS with no horizontal cut}(v). From the 
construction of $N$, we see that 

(1) for each $a\in F$, there is an arc $a'$ in $N$ with $\eta(a')=a$.  

Let $J:=\{k: |B_k|=4 \,\, \mbox{and} \,\, a_kb_k\in F\}$. Then 

(2) for each $a\in F\de\{a_kb_k:k\in J\}$, there is a unique arc $a'$ in $N$ with $\eta(a')=a$. 

By Lemma \ref{lem:FAS with no horizontal cut}(iii), exactly one of $a_kv_k$ and $v_{k+1}b_k$ belongs to $F$ for each $k\in J$. Let $J_1:=\{k\in J:a_kv_k\in F\}$ and 
$J_2:=\{k\in J:v_{k+1}b_k\in F\}$. Define 
\[Z:=\{a': a\in F\de\{a_kb_k:k\in J\}\}\cup \{a_{k1}b_{k1}: k\in J_1\}\cup\{a_{k2}b_{k2}: k\in J_2\}.\] 
Then $F=\{\eta(a): a\in Z\}$ and $w(F)=c(Z)$ by (1), (2) and the definition of $c$. It remains to show that $Z$ is a $(\Lambda, \Pi)$-cut in $N$.

For this purpose, let $P$ be an arbitrary path from a source $s_i\in \Lambda$ to a sink $t_j\in \Pi$ in $N$. We aim to prove that

(3) $P \cap Z \ne \emptyset$. 

By Lemma \ref{lem:paths in tildeT}, we have $i \le j$. By Lemma \ref{lem:btw paths}, the arcs in $\{\eta(a): a\in P \cap A'\}$ form a $v_i$-$v_j$ 
path $Q \in {\cal Q}$. By Lemma \ref{lem:FAS with no horizontal cut}(ii), we obtain

(4) $Q\cap F=\{\eta(a): a\in P \cap A'\} \cap F \neq \emptyset$.

We may assume that 

(5) $Q \cap F \subseteq \{a_kb_k: k\in J\}$ and hence $i=j$ by Lemma \ref{lem:FAS with no horizontal cut}(iv).

Otherwise, let $e \in (Q\cap F) \de \{a_kb_k: k\in J\}$. By (2), there is a unique arc $e'$ in $N$ with $\eta(e')=e$. Then $e' \in P\cap Z$ and hence
(3) holds. 

Since $i=j$, from the construction of $N$ it follows that 

(6) if $P \cap \{a_{k1}b_{k1}, a_{k2}b_{k2}\} \ne \emptyset$, then $k \in \{i, i-1\}$.  

Symmetry and (6) allows us to assume that $P$ is as described in one of the following two cases.

{\bf Case 1.} $P=R\cup \{a_{i1}b_{i1}, b_{i1}t_i\}$, where $R$ is an $s_i$-$a_{i1}$ path in $N$, but $P$ contains neither $a_{(i-1)1}b_{(i-1)1}$ nor
$a_{(i-1)2}b_{(i-1)2}$. In this case, $Q \cap F=\{a_ib_i\}$ by (4)-(6) and hence $\{\eta(a): a\in R \cap A'\} \cap F=\emptyset$.
Let $P':=R\cup \{a_{i1}t_i\}$ and let $Q':=\{\eta(a): a\in P' \cap A'\}$. By Lemma \ref{lem:btw paths}, $Q'$ is a cycle in ${\cal Q}$. Since $F$ is 
an FAS of $T$, we have $Q' \cap F \ne \emptyset$, so $a_iv_i=\eta(a_{i1}t_i) \in F$ (as $Q \cap F=\{a_ib_i\}$). By the definition of $Z$, we obtain 
$a_{i1}b_{i1} \in Z$. Hence $a_{i1}b_{i1} \in P\cap Z$. This proves (3).

{\bf Case 2.} $P=\{s_ia_{(i-1)2},a_{(i-1)2}b_{(i-1)2}\} \cup R \cup \{a_{i1}b_{i1}, b_{i1}t_i\}$, where $R$ is a $b_{(i-1)2}$-$a_{i1}$ path in $N$. By (4) and (5), 
at least one of $a_{i-1}b_{i-1}$ and ${a_ib_i}$ belongs to $F$, say ${a_ib_i}\in F$. Consider the path $P'=\{s_ia_{(i-1)2},a_{(i-1)2}b_{(i-1)2}\} \cup R
\cup \{a_{i1}t_i\}$. By the same argument as used for Case 1, we deduce that $P' \cap Z \ne \emptyset$. Thus $[\{s_ia_{(i-1)2},a_{(i-1)2}b_{(i-1)2}\} \cup R]
\cap Z \ne \emptyset$ or $a_{i1}t_i \in Z$ (and thus $a_{i1}b_{i1} \in Z$). Consequently, (3) holds in either subcase. \qed

\vskip 4mm

\begin{lemma}\label{lem:if FAS has h cut}
Let $Z$ be a minimum $(\Lambda, \Pi)$-cut in $N$ and let $F:=\{\eta(a): a\in Z\}$. If $F$ contains a horizontal cut in $T$, then there exists a minimum 
FAS in $T$ that contains at least one horizontal cut.
\end{lemma}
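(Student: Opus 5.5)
We argue by contradiction. Suppose every minimum FAS of $T$ contains no horizontal cut, and fix a minimum FAS $F^*$ of $T$. Since $F^*$ is minimum and weights are nonnegative, we may take $F^*$ inclusionwise minimal. By Lemma \ref{lem:FAS to cut}, there is a $(\Lambda,\Pi)$-cut $Z^*$ in $N$ with $c(Z^*)=w(F^*)$. Hence
\[
c(Z)\le c(Z^*)=w(F^*)=\tau_w(T).
\]
The aim is to show that $F=\{\eta(a): a\in Z\}$ is an FAS with $w(F)\le c(Z)$. Then $F$ is a minimum FAS containing a horizontal cut, which contradicts the assumption. (Alternatively, this directly exhibits the desired FAS without arguing by contradiction.)

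First we reduce to the situation of Lemma \ref{lem:cut to FAS}.

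\begin{itemize}
\item Arcs of $A''$ have capacity $\infty$. Since $N$ has a finite cut (for instance, all arcs leaving the sources), a minimum cut $Z$ satisfies $Z\cap A''=\emptyset$.
\item Replace $Z$ by an inclusionwise minimal $(\Lambda,\Pi)$-cut $Z_0\subseteq Z$. Because $Z$ is minimum and capacities are nonnegative, $c(Z_0)=c(Z)$. We also have $Z_0\cap A''=\emptyset$.
\item By Lemma \ref{lem:cut to FAS}, $F_0:=\{\eta(a):a\in Z_0\}\subseteq F$ is an FAS in $T$ with $w(F_0)\le c(Z_0)=c(Z)\le \tau_w(T)$. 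So $F_0$ is a minimum FAS.
\end{itemize}

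The remaining issue is that $F_0$ might no longer contain the horizontal cut $\sigma\subseteq F$, because some arcs were dropped when shrinking $Z$ to $Z_0$.

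To handle this, we would use $F$ itself rather than $F_0$. The set $F\supseteq F_0$ is an FAS, and $w(F)\le \sum_{a\in Z}c(a)=c(Z)$. Here we must check that the surjection $\eta$ does not cause overcounting in the wrong direction. It does not, since each $e\in F$ has at least one preimage in $Z$ of capacity $w(e)$. Hence $w(F)\le c(Z)\le\tau_w(T)$. So $F$ is a minimum FAS containing the horizontal cut $\sigma$, and we are done.

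The main obstacle is the step $Z\cap A''=\emptyset$ together with the inequality $w(F)\le c(Z)$ for a possibly non-minimal $Z$. We expect this to be routine: the first half of the proof of Lemma \ref{lem:cut to FAS} does not use minimality of the cut for the weight bound. FAS-ness of $F$ follows because $F\supseteq F_0$.
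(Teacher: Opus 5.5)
Your proof is correct and follows essentially the same route as the paper: Lemma \ref{lem:cut to FAS} shows that $F$ is an FAS with $w(F)\le c(Z)$, and Lemma \ref{lem:FAS to cut} shows that every FAS with no horizontal cut has weight at least $c(Z)$. Your step of passing to an inclusionwise minimal $Z_0\subseteq Z$ and then going back to $F\supseteq F_0$ fixes a small looseness in the paper's ``we may assume $Z$ is inclusionwise minimal,'' since, taken literally, that reduction could drop arcs of the horizontal cut contained in $F$.
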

\vspace{-1mm}

{\bf Proof.} Since $Z$ is a minimum $(\Lambda, \Pi)$-cut in $N$, clearly we may assume that it is inclusionwise minimal. Since each arc in $A''$ has an infinite 
capacity, $Z\cap A''=\emptyset$. By Lemma \ref{lem:cut to FAS}, $F$ is an FAS with $w(F)\le c(Z)$. According to Lemma \ref{lem:FAS to cut}, any FAS of $T$ with 
no horizontal cut has a total weight $\ge c(Z)$ (and hence $\ge w(F)$). Therefore there exists a minimum FAS in $T$ that contains a horizontal cut. \qed\\

Recall the terminology and notation introduced in the paragraph succeeding Theorem \ref{FAS}. Let ${\cal H}_i$ be the set of all horizontal cuts of $T$ contained 
in $B_i$ for $1\le i \le B_{m-1}$. Set ${\cal H}_0={\cal H}_m:=\emptyset$. For $\sigma \in {\cal H}_0$, set $T_{\sigma \ell}:=\emptyset$ and $T_{\sigma r}:=T$.  
For $\sigma \in {\cal H}_m$, set $T_{\sigma \ell}:=T$ and $T_{\sigma r}:=\emptyset$. For any $\sigma \in {\cal H}_i$ and $\omega \in {\cal H}_j$
with $0\le i <j \le m$, set $T_{\sigma \omega}: =T_{\sigma r} \cap T_{\omega \ell}$; that is, $T_{\sigma \omega}$ is the subtournament of $T$ induced by
all vertices in $V(T_{\sigma r}) \cap V(T_{\omega \ell})$. Observe that $T_{\sigma \omega}$ is also as depicted in Figure 3, whose first vertical block contains
all vertices in $B_{\sigma r} \cup A_{i+1}$ and last vertical block contains all vertices in $B_{\omega \ell} \cup A_j$. As defined in the first section, 
$\tau_w(T_{\sigma \omega})$ is the minimum total weight of an FAS in $T_{\sigma \omega}$.  Let  $\tau_w^*(T_{\sigma \omega})$ denote the minimum total weight 
of an FAS $F$ in $T_{\sigma \omega}$, subject to the constraint that $F$ contains no horizontal cut of $T$ in ${\cal H}_k$, for all $i<k<j$.

\vskip 3mm

{\bf Proof of Theorem \ref{FAS}.} For every $\sigma \in {\cal H}_i$ and $\omega \in {\cal H}_j$ with $0\le i <j \le m$,   
Lemma \ref{lem:FAS} leads to the following recurrence relation:
\[\tau_w(T_{\sigma \omega})=\min \{\tau_w^*(T_{\sigma \omega}), \hskip 2mm \min_{\pi}\{\tau_w(T_{\sigma \pi})+w(\pi)+\tau_w(T_{\pi \omega})\}\},\]
where $\pi$ ranges over horizontal cuts of $T$ contained in ${\cal H}_k$, for all $i<k<j$.

Note that $v_{i+1}, v_{i+2}, \ldots, v_j$ are the only join vertices of $T_{\sigma \omega}$. Set $\Lambda_{\sigma \omega}:=\{s_{i+1},s_{i+2}, \ldots, s_j\}$ 
and $\Pi_{\sigma \omega}:=\{t_{i+1}, t_{i+2}, \ldots, t_j\}$. Let $(N_{\sigma \omega}, \Lambda_{\sigma \omega}, \Pi_{\sigma \omega}, c)$ be the  
corresponding network of $(T_{\sigma \omega}, w)$, which can be constructed in $O(|T_{\sigma \omega}|^2)$ time. 

Set $t:=j-i$. We shall find a minimum FAS $F_{\sigma \omega}$ in $T_{\sigma \omega}$ together with $\tau_w(T_{\sigma \omega})$ recursively, in the increasing order of $t$.

When $t=1$, the tournament $T_{\sigma \omega}$ contains only one vertical block and one join vertex $v_{i+1}$. Thus $\Lambda_{\sigma \omega}=\{s_{i+1}\}$ and 
$\Pi_{\sigma \omega}=\{t_{i+1}\}$. Let $Z_{\sigma \omega}$ be a minimum $s_{i+1}$-$t_{i+1}$ cut in $N_{\sigma \omega}$. Then $F_{\sigma \omega}:=\{\eta(a): a \in 
Z_{\sigma \omega}\}$ is clearly a minimum FAS in $T_{\sigma \omega}$ and $\tau_w(T_{\sigma \omega})=w(F_{\sigma \omega})$. We can find $Z_{\sigma \omega}$ (and hence 
$F_{\sigma \omega}$) in $O(|T_{\sigma \omega}|^3)$ time using Tarjan's algorithm \cite{tarjan2}. Since the sum of $|T_{\sigma \omega}|^3$ over all possible choices 
of the pair $(\sigma, \omega)$ is $O(n^3)$, where $n=|T|$, this step takes $O(n^3)$ time.

Suppose we have obtained $F_{\sigma \omega}$ and $\tau_w(T_{\sigma \omega})$ for all possible choices of the pair $(\sigma, \omega)$ corresponding to $t-1$. 
Let us proceed to the step for $t\ge 2$.

Let $\rho$ be a horizontal cut of $T$ contained in some ${\cal H}_k$, with $i<k<j$, that achieves $\min_{\pi}\{\tau_w(T_{\sigma \pi})+w(\pi)+
\tau_w(T_{\pi \omega})\}$; such $\rho$ can be found in $O(t^2)=O(|T_{\sigma \omega}|^2)$ time. Set $K_1:= F_{\sigma \rho} \cup \rho \cup F_{\rho \omega}$.

Let $Z$ be a minimum $\Lambda_{\sigma \omega}$-$\Pi_{\sigma \omega}$ cut in $N_{\sigma \omega}$, which can be found in $O(|T_{\sigma \omega}|^3)$ time
using aforementioned Tarjan's algorithm \cite{tarjan2}. Clearly, we may assume that $Z$ is inclusionwise minimal. Set $K_2:=\{\eta(a):a\in Z\}$. 

$\bullet$ If $K_2$ contains no horizontal cut of $T$, then $\tau_w^*(T_{\sigma \omega})=w(K_2)=c(Z)$ by Lemmas \ref{lem:cut to FAS} and \ref{lem:FAS to cut}. 
Let $w(K_i)= \min \{w(K_1), w(K_2)\}$, with $i=1$ or $2$. Set $F_{\sigma \omega}:=K_i$ and $\tau_w(T_{\sigma \omega}):=w(K_i)$.

$\bullet$ If $K_2$ contains a horizontal cut of $T$, then $\tau_w^*(T_{\sigma \omega})\ge \min_{\pi}\{\tau_w(T_{\sigma \pi})
+w(\pi)+\tau_w(T_{\pi \omega})\}$ by Lemma \ref{lem:if FAS has h cut}. Set $F_{\sigma \omega}:=K_1$ and $\tau_w(T_{\sigma \omega}):=w(K_1)$.

Since the sum of $|T_{\sigma \omega}|^3$ over all possible choices of the pair $(\sigma, \omega)$ is $O(tn^3)$, where $n=|T|$, this step takes $O(n^4)$ time
as $1\le t \le n$. Therefore, a minimum FAS in $T$ and $\tau_w(T)$ can be found in $O(n^5)$ time. \qed

\section{Cycle Packings}

In this section we devise a combinatorial polynomial-time algorithm for finding maximum cycle packings in arc-weighted tournaments in ${\cal G}^*$, which 
forms the backbone of the general algorithms to be presented in the subsequent section. 

\begin{theorem}\label{CP}
Let $T=(V,A)$ be a tournament in ${\cal G}^*$ with a nonnegative integral weight $w(e)$ on each arc $e$. Then a maximum cycle packing in $(T,w)$
can be found in $O(n^7)$ time, where $n=|V|$.
\end{theorem}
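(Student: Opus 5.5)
The plan is to mirror the dynamic programming of Theorem~\ref{FAS}, but on the packing side, using the network $(N,\Lambda,\Pi,c)$ and flow decompositions instead of cuts. For each pair $\sigma\in{\cal H}_i$, $\omega\in{\cal H}_j$ with $0\le i<j\le m$, I will compute a maximum cycle packing $y_{\sigma\omega}$ of $(T_{\sigma\omega},w)$ together with a certificate FAS $F_{\sigma\omega}$ (from Section~3) satisfying $|y_{\sigma\omega}|=w(F_{\sigma\omega})$. The latter equality is guaranteed to be achievable by Theorem~\ref{Theorem1}, since $T_{\sigma\omega}\in{\cal G}^*$ is M\"obius-free. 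The algorithm itself will prove optimality constructively.

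\textbf{The case without a horizontal cut.} First I handle the situation in which the minimum $(\Lambda_{\sigma\omega},\Pi_{\sigma\omega})$-cut $Z$ of $N_{\sigma\omega}$ yields an FAS $K_2$ with no horizontal cut, so that $c(Z)=\tau_w(T_{\sigma\omega})$.

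1. Compute a maximum flow $f$ in $N_{\sigma\omega}$ and decompose it into $O(n^2)$ source--sink paths. By Lemmas~\ref{lem:paths in tildeT} and~\ref{lem:btw paths}, each path maps under $\eta$ to some $v_p$-$v_q$ path $Q\in\mathcal{Q}$ with $p\le q$. Paths with $p=q$ are already cycles.
2. Each path with $p<q$ must be closed by a ``return'' $v_q$-$v_p$ path through the horizontal blocks $B_{q-1},\dots,B_p$, as in Lemma~\ref{lem:obs on cycles}(iii).
3. The return paths consume capacity on arcs $v_{k+1}v_k$, $v_{k+1}a_k$, $a_kv_k$, $b_kv_k$, $v_{k+1}b_k$ and $a_kb_k$. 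The first of these is deleted in $N$; the others may also carry forward flow. So the key step is to choose the flow so that this double usage fits within $w$.

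For step~3 I would add return arcs $t_q\to s_{q-1}$, with capacity equal to the residual $v_q$-$v_{q-1}$ connectivity of $B_{q-1}$, turning the problem into a circulation problem. Feasibility must then be argued from the assumption that no horizontal cut lies in a minimum FAS: Lemma~\ref{lem:FAS with no horizontal cut}(v) pins down exactly which arcs of $B_k$ a minimum FAS uses, and a complementary-slackness argument should show that the circulation saturates $Z$. Each flow path plus its return path then gives a cycle, and the resulting packing has size $c(Z)$.

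\textbf{The case with a horizontal cut.} Otherwise a minimum FAS contains some horizontal cut $\rho\in{\cal H}_k$, and $\tau_w(T_{\sigma\omega})=\tau_w(T_{\sigma\rho})+w(\rho)+\tau_w(T_{\rho\omega})$. Here I combine $y_{\sigma\rho}$, $y_{\rho\omega}$ and a packing of $w(\rho)$ cycles crossing $\rho$.

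The crossing cycles should be obtained by a max-flow across $B_k$ with $\Lambda$-side sources in $T_{\rho\omega}$. Since cycles crossing $\rho$ use arcs of both sides, the sub-packings must be computed against reduced weights. So I would process in the order: first route $w(\rho)$ crossing cycles, each a $v_k$-to-$v_{k+1}$ forward path plus a return path through $\rho$, chosen to use only arcs outside some minimum FAS of each side; then recurse.

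Justifying that this greedy choice preserves $\tau_w$ on both sides is where I would invoke Lemma~\ref{lem:FAS}, together with the fact that $\rho$ is a minimal cut, so that $w(\rho)$ return paths exist.

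\textbf{Complexity and main obstacle.} There are $O(n^2)$ pairs $(\sigma,\omega)$ and $O(n)$ choices of $\rho$. Each step needs a max-flow costing $O(n^3)$ plus a path decomposition, which together should give $O(n^7)$.

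I expect the main obstacle to be step~3: proving that the forward flow can always be completed to cycles without violating $w$ on the horizontal-block arcs. This likely requires a careful case analysis on $|B_k|\in\{2,3,4\}$, paralleling Lemma~\ref{lem:FAS with no horizontal cut}(v), and possibly an uncrossing/rerouting argument in the style of Lemma~\ref{lem:packtriangle}.
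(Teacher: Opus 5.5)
Your proposal has a genuine gap, and it is the step you flag as the main obstacle. Nothing in the plan decides how the weight of each horizontal-block arc $a_kv_k$, $b_kv_k$, $a_kb_k$, $v_{k+1}a_k$, $v_{k+1}b_k$ is split between lower segments (forward flow in $N$) and upper segments (return paths). An arbitrary maximum flow in $(N,c)$ is free to spend these arcs on lower segments, for example by filling $a_{k1}t_k$ and $b_{k1}t_k$ with lower segments ending at $v_k$. Then $B_k\setminus\{v_kv_{k+1}\}$ may have no residual $v_{k+1}$-$v_k$ path left for the flow paths that cross $B_k$.

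Your return arcs $t_q\to s_{q-1}$, with capacity equal to the ``residual connectivity'' of $B_{q-1}$, do not fix this. They are circular, because the residual depends on the forward flow you have not yet chosen. Being separate arcs, they also do not model the shared capacity at all. Your appeal to complementary slackness describes a property an optimal packing has, not a way to compute one; this coupling is exactly the multicommodity nature of the problem.

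The horizontal-cut branch has the same hole. Routing $w(\rho)$ crossing cycles through arcs outside one fixed minimum FAS of each side does not preserve $\tau_w(T_{\sigma\rho})$ or $\tau_w(T_{\rho\omega})$, since lowering weights off that FAS can make another FAS cheaper. Lemma~\ref{lem:FAS} only concerns restricting a minimum FAS and says nothing about packings under reduced weights. So the recursive pieces need not add up to $\tau_w(T_{\sigma\omega})$.

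The missing idea is the paper's way of fixing this split in advance, which makes dynamic programming over horizontal cuts unnecessary for packings:
\begin{enumerate}
\item Preprocess (Lemma~\ref{preprocessing}) so that every $|B_i|=4$ and every arc of positive weight lies in some minimum FAS. This costs $O(n^2)$ FAS computations at $O(n^5)$ each, which is where the $O(n^7)$ bound comes from.
\item By the CM property, every maximum packing then saturates every arc (Lemma~\ref{lem: saturated arcs}).
\item An uncrossing argument (Lemma~\ref{lem:proper packing}) gives a maximum packing whose lower-segment usages $y_i^1,\dots,y_i^6$ of the horizontal-block arcs are explicit formulas in $w$.
\item With these numbers the capacities $c'$ on $N$ are fixed a priori. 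One maximum flow, rerouted so that $a_{i1}b_{i1}$ is always followed by $b_{i1}t_i$ and $a_{i2}b_{i2}$ is always preceded by $s_{i+1}a_{i2}$, yields exactly a family of lower segments.
\item The leftover capacities $\bar c$ on the horizontal blocks carry exactly the required $v_{i+1}$-$v_i$ flow (statements (9)--(11) in the proof), so the upper segments can be attached greedily.
\end{enumerate}
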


We point out that a {\em cycle packing} actually consists of a family of cycles $C_1, C_2, \ldots, C_t$ together with a family of nonnegative integers 
(multiplicities) $y(C_1), y(C_2), \ldots, y(C_t)$  such that $\sum_{e \in C_i} y(C_i) \le w(e)$ for each $e\in A$. It is called {\em maximum} if its size 
$\sum_{i=1}^t y(C_i)$ is as large as possible. To see that these concepts are essentially the same as those introduced in Section 1, define $y(C)=0$ for 
any other cycle $C$ in $T$. Furthermore, to ensure efficiency of an algorithm for the cycle packing problem, $t$ must be bounded above by a polynomial in $n$. 

By Theorem \ref{Theorem1}, the maximum size of a cycle packing is equal to the minimum total weight of an FAS in $T$; this min–max relation will 
serve as an optimality criterion in our algorithm design. A natural question then arises: Can such an algorithm be designed without relying on 
this criterion? We have devoted considerable effort to pursuing this approach, but so far without success. These attempts suggest that such an approach 
might be very difficult to carry out, particularly because the cycle packing problem is a type of multicommodity flow problem (see, for instance,
Even, Naor, and Zosin \cite{ENZ}).

To fully leverage this optimality criterion, we need to clean up $T$ beforehand. 

\begin{lemma}\label{preprocessing}
The input tournament $T$ can be preprocessed in $O(n^7)$ time so that it satisfies the following properties:
\begin{itemize}
\vspace{-2mm}
\item[(i)] Each horizontal block $B_i$ has precisely four vertices; and
\vspace{-2mm}
\item[(ii)] Each arc $e$ with $w(e)>0$ is contained in a minimum FAS in $(T, w)$.
\end{itemize}
\end{lemma}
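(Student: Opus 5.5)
For (i), I would make every horizontal block have four vertices by inserting dummy vertices whose incident arcs all get weight $0$, or which are reached only through weight-$0$ arcs. Concretely, if $|B_i|=2$ or $3$, I add new vertices $a_i$ and/or $b_i$ to $B_i$, orient the arcs among them as in the four-vertex pattern of Figure~3, and direct all other arcs from ``left'' to ``right'' so the tournament stays in ${\cal G}^*$. Any new arc that coincides in direction with an existing arc of $B_i$ simply inherits that arc's role; to preserve $v_{i+1}v_i$, I keep it as a structural arc. Every new arc gets weight $0$.

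A new vertex changes neither $\tau_w$ nor $\nu_w$, for two reasons. Every cycle through it uses a weight-$0$ arc, so it cannot enter a packing. Conversely, an FAS of the old tournament together with the weight-$0$ new arcs is an FAS of the new one of the same weight. A packing in the new tournament is therefore one in the old tournament. The delicate point is that the four-vertex pattern requires $v_iv_{i+1}$ rather than $v_{i+1}v_i$. If $|B_i|\le 3$, I would instead realise the arc $v_{i+1}v_i$ as the path $v_{i+1}a_iv_i$ or $v_{i+1}b_iv_i$, carrying the original weight on one arc and weight $\infty$ (or $w(A)$) on the other. I would then check that the cycles are preserved bijectively, each cycle through the old arc becoming one through the subdivided path. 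This costs $O(n^2)$ time.

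For (ii), I would use Theorem~\ref{FAS} as an oracle. First compute $\tau=\tau_w(T)$ in $O(n^5)$ time. Then, for each arc $e$ with $w(e)>0$, test whether $e$ lies in some minimum FAS by setting $w_e$ equal to $w$ except that $w_e(e)=0$. In that case $\tau_{w_e}(T)=\tau-w(e)$ if and only if some minimum FAS of $(T,w)$ contains $e$. The ``if'' direction is clear. For ``only if'', take an optimal $F$ for $w_e$ with $e\in F$ (adding $e$ costs nothing); then $w(F)=\tau$. If the test fails for $e$, no minimum FAS uses $e$, so lowering $w(e)$ by one keeps $\tau$ unchanged. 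I would then argue that $\nu$ is also unchanged. By Theorem~\ref{Theorem1} it suffices to observe that $\tau$ is unchanged and $\nu=\tau$ for both weight functions. Any packing for the reduced weight is still a packing for $w$. So I reduce $w(e)$, ideally by the largest amount that keeps $\tau$ fixed, found by binary search or, better, directly: set $w(e)$ to $\tau_{w_e}(T)-(\tau-\text{slack})$. The precise rule is to set $w(e):=\tau-\tau_{w_e}(T)$, which makes $e$ tight while keeping $\tau$ unchanged, since every FAS avoiding $e$ has weight at least $\tau$.

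One pass over all arcs does not finish the job, because reducing one weight can make another arc lose its property. I therefore iterate until no change occurs. Each pass only decreases weights, and an arc that has become tight, meaning it lies in a minimum FAS, stays tight. This holds because $\tau$ never changes and the weight of the FAS witnessing tightness does not increase. The number of reductions is thus at most $|A|=O(n^2)$, each requiring one FAS computation of $O(n^5)$, for $O(n^7)$ in total. I expect the main obstacle to be part (i): keeping the padded tournament inside ${\cal G}^*$ while exactly preserving the family of weighted cycles, especially for the arc $v_{i+1}v_i$.
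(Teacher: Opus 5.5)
Your proposal is correct and is essentially the paper's proof. For $|B_i|\le 3$ the paper also replaces $v_{i+1}v_i$ by a path through a new vertex and gives all other new arcs, including $v_iv_{i+1}$, weight $0$; it puts $w(v_{i+1}v_i)$ on both arcs of the path, though your larger weight on the second arc works equally well. For (ii) it resets each positive $w(e)$ to $\tau_w(T)-\tau_c(T)$ via the oracle of Theorem~\ref{FAS}, which is exactly your rule.

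One correction concerns part (ii). Your own observation that a tight arc stays tight (because $\tau$ is unchanged and weights only decrease) shows that a single pass over the arcs already suffices, as in the paper. So your remark that one pass does not finish the job is false. The $O(n^7)$ bound also needs a different count: it comes from $O(n^2)$ oracle calls of $O(n^5)$ each, not from the number of reductions. Counting reductions, as you do, does not bound the cost of repeated passes.
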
 

\vspace{-1mm}

{\bf Proof.} (i) Let us apply the following operations to each horizontal block $B_i$ with $|B_i|\le 3$.

When $|B_i|=2$, we first delete the arc $v_{i+1}v_i$. Then add two vertices $a_i$ and $b_i$ and six arcs $a_iv_i, v_{i+1}a_i, b_iv_i, v_{i+1}b_i, a_ib_i, v_iv_{i+1}$, 
define $w(a_iv_i)=w(v_{i+1}a_i):=w(v_{i+1}v_i)$ and $w(b_iv_i)=w(v_{i+1}b_i)=w(a_ib_i)=w(v_iv_{i+1}):=0$. Finally, for each vertex $u\in (\cup_{t=1}^i A_t)\cup
(\cup_{t=1}^{i-1}B_t)\de v_i$, add two arcs $ua_i$ and $ub_i$ with zero weights; for each vertex $u\in (\cup_{t=i+1}^m A_t)\cup(\cup_{t=i+1}^{m-1}B_t)\de v_{i+1}$, 
add two arcs $a_iu$ and $b_iu$ with zero weights. Replace $B_i$ by the subtournment with four vertices $v_i, v_{i+1}, a_i, b_i$.

When $|B_i|=3$, we first delete the arc $v_{i+1}v_i$. Then add one vertex $b_i$ and four arcs $b_iv_i, v_{i+1}b_i, a_ib_i, v_iv_{i+1}$, define $w(b_iv_i)=w(v_{i+1}b_i)
:=w(v_{i+1}v_i)$ and $w(a_ib_i)=w(v_iv_{i+1}):=0$. Finally, for each vertex $u \in (\cup_{t=1}^i A_t)\cup(\cup_{t=1}^{i-1}B_t)\de v_i$, add an arc $ub_i$ with zero 
weight; for each vertex $u \in (\cup_{t=i+1}^m A_t)\cup(\cup_{t=i+1}^{m-1} B_t)\de v_{i+1}$, add an arc $b_iu$ with zero weight. Replace $B_i$ by the subtournment 
with four vertices $v_i, v_{i+1}, a_i, b_i$.

Clearly, these operations preserve both the minimum total weight of an FAS and the maximum size of a cycle packing, and they induce a one-to-one correspondence 
between cycle packings in the original and those in the resulting $T$.   

(ii) Consider an arbitrary arc $e$ of $T$ with $w(e)>0$. Let $c$ be the weight function obtained from $w$ by replacing $w(e)$ with zero. Let $\tau_w(T)$ 
(resp. $\tau_c(T)$) be the minimum total weight of an FAS in $(T, w)$ (resp. $(T, c)$), which can be found in $O(n^5)$ time by Theorem \ref{FAS}. 
Let $w'$ arise from $w$ by replacing $w(e)$ with $\tau_w(T)-\tau_c(T)$. Clearly, $\tau_{w'}(T)=\tau_w(T)$ and $e$ is contained in a minimum FAS in $(T, w')$ 
if $w'(e)>0$. Replace $w$ by $w'$ and repeat the process until all arcs with positive weights have been scanned; the resulting 
weight function $w$ will satisfy property (ii). \qed\\

So we may assume hereafter that $T$ and $w$ are as described in the above lemma. We shall use the same notation as introduced in Section 1 (with $T$ in
place of $G$). 

\begin{lemma}\label{lem: saturated arcs}
Let $y$ be a maximum cycle packing in $(T, w)$. Then 
\[\sum_{e \in C\in \mathcal{C}} y(C)=w(e) \hskip 2mm \mbox{for each}  \hskip 2mm e\in A. \]
\end{lemma}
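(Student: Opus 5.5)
The plan is to combine the min--max relation of Theorem \ref{Theorem1} with complementary slackness. The tournament $T\in{\cal G}^*$ is M\"obius-free by Theorem \ref{structure}, since every tournament in ${\cal G}$ has the structure of Figure 3. By Theorem \ref{Theorem1}, $T$ is therefore cycle Mengerian, so $\nu_w(T)=\tau_w(T)$. One caveat: the preprocessing in Lemma \ref{preprocessing}(i) keeps $T$ in the class ${\cal G}^*$, because the new blocks $B_i$ have four vertices arranged exactly as in Figure 3. I would also note briefly that Theorem \ref{Theorem1} applies to non-strong tournaments. This holds because the M\"obius-free property and the min--max equality can be checked on strong components, and cycles never leave a strong component.

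First fix an arc $e$. If $w(e)=0$, the packing constraint gives $\sum_{e\in C} y(C)\le 0$, so equality is automatic. Now suppose $w(e)>0$. By Lemma \ref{preprocessing}(ii), there is a minimum FAS $F$ in $(T,w)$ with $e\in F$.

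The key double counting step is as follows. Every cycle $C$ with $y(C)>0$ meets $F$, so
\[
\nu_w(T)=\sum_{C}y(C)\le\sum_C y(C)\,|C\cap F|=\sum_{f\in F}\sum_{f\in C}y(C)\le\sum_{f\in F}w(f)=\tau_w(T).
\]
Because $y$ is maximum, $\sum_C y(C)=\nu_w(T)=\tau_w(T)$. Hence both inequalities in the display are equalities. Equality in the second inequality, term by term (each term is individually bounded), gives $\sum_{f\in C}y(C)=w(f)$ for every $f\in F$, and in particular for $f=e$. As a by-product, equality in the first inequality shows that each cycle used by $y$ meets $F$ exactly once.

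There is no real obstacle here; the argument is short. The only delicate point is justifying that the preprocessed $(T,w)$ still satisfies the hypothesis $\nu_w=\tau_w$, that is, that it remains M\"obius-free, or at least cycle Mengerian. I would settle this directly by invoking Theorem \ref{structure} for each strong component, after applying the tidy-up reduction described before Lemma \ref{lem:obs on cycles}.
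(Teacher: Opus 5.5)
Your proof is correct and uses the same two ingredients as the paper: cycle Mengerianity via Theorem \ref{Theorem1}, which the paper invokes directly without your strong-component discussion, and Lemma \ref{preprocessing}(ii). The only difference is mechanical: the paper argues by contradiction, lowering $w(a)$ by one so that $\nu$ stays the same while $\tau$ strictly drops, whereas you read off saturation of every arc of $F$ directly from the weak-duality chain, which needs the min--max equality only at $w$ itself.
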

\vspace{-1mm}

{\bf Proof.} By the definition of cycle packing, $\sum_{e\in C\in\mathcal{C}} y(C)\le w(e)$ for each $e\in A$. Assume the contrary: $\sum_{a\in C\in\mathcal{C}} y(C)< w(a)$ for 
some $a\in A$. Then $\sum_{a\in C\in\mathcal{C}}y(C)\le w(a)-1$, as $y$ is integral. Let $c$ be the weight function obtained from $w$ by replacing
$w(a)$ with $w(a)-1$. Then $y$ is also a maximum cycle packing in $T$ with respect to $c$. It follows that

(1) $\nu_c(T)=\nu_w(T)$. 

By Lemma \ref{preprocessing}(ii), $a$ is contained in a minimum FAS $F$ in $(T, w)$. Since $\tau_w(T)=w(F)$ and $\tau_c(T)\le c(F) < w(F)$,
we obtain

(2) $\tau_c(T)<\tau_w(T)$. 

By Theorem \ref{Theorem1}, $T$ is cycle Mengerian. Hence $\nu_c(T)=\tau_c(T)$ and $\nu_w(T)=\tau_w(T)$, which together with (1) yield
$\tau_c(T)=\tau_w(T)$, contradicting (2). \qed\\ 

For each cycle $C$ in $T$, let $v_i$ (resp. $v_j$) be the join vertex on $C$ with the smallest (resp. largest) subscript. 
By Lemma \ref{lem:obs on cycles}(ii) and (iv), if $i<j$, then $C[v_j, v_i]$ is fully contained in the horizontal blocks $B_{j-1}, B_{j-2}, \ldots, B_i$ and 
passes through the join vertices $v_j,v_{j-1}, \ldots,v_i$ in order. Moreover, $C[v_i, v_j]$ contains no join vertex other than $v_i, v_j$ and contains at 
most one non-join vertex from each horizontal block $B_k$ for $i \le k \le j-1$. If $i=j$, set $C[v_i, v_j]:=C$ and $C[v_j, v_i]:=v_i$. In both cases, 
we call $C[v_i, v_j]$ the {\em lower segment} of $C$, call $C[v_j, v_i]$ the {\em upper segment} of $C$, and denote them by $L(C)$ and $U(C)$, respectively.

To find a maximum cycle packing in $(T, w)$, we shall actually first find a maximum lower segment packing and then complete these lower segments 
into cycles by adding corresponding upper segments. The crux of our algorithm is to determine the portion of $w(e)$, for each arc $e$ contained in horizontal 
blocks, that is used to pack these lower segments. This motivates us to consider the following six families of cycles in $T$ for each $1 \le i \le m-1$:

\vskip 1mm
$\bullet$ $\mathcal{C}_i^1$ consists of cycles whose lower segments end at $v_i$ and contain $a_iv_i$;

\vskip 1mm
$\bullet$ $\mathcal{C}_i^2$ consists of cycles whose lower segments end at $v_i$, contain $b_iv_i$ but do not contain $a_ib_i$;

\vskip 1mm   
$\bullet$ $\mathcal{C}_i^3$ consists of cycles whose lower segments end at $v_i$ and contain both $a_ib_i$ and $b_iv_i$;

\vskip 1mm 
$\bullet$ $\mathcal{C}_i^4$ consists of cycles whose lower segments start at $v_{i+1}$ and contain $v_{i+1}b_i$;

\vskip 1mm  
$\bullet$ $\mathcal{C}_i^5$ consists of cycles whose lower segments start at $v_{i+1}$, contain $v_{i+1}a_i$ but do not contain 

\hskip 8mm $a_ib_i$; and

\vskip 1mm
$\bullet$ $\mathcal{C}_i^6$ consists of cycles whose lower segments start at $v_{i+1}$ and contain both $v_{i+1}a_i$ and $a_ib_i$.
\vskip 1mm

Note that there might exist cycles in $T$ whose lower segments end at $v_i$ (or start at $v_{i+1}$) but contain 
neither $a_i$ nor $b_i$; all of them are outside these six families.  For each cycle packing $y$ in $(T, w)$, 
$1\le i\le m-1$ and $1\le j\le 6$, set 

\vskip 2mm
$\bullet$ $y_i^j:=\sum_{C\in\mathcal{C}_i^j} y(C)$.
\vskip 2mm

For convenience, we also set

\vskip 2mm
$\bullet$  $V_i^{\ell}:= \mbox{the vertex set of} \hskip 2mm (\cup_{t=1}^i A_t)\cup(\cup_{t=1}^{i-1}B_t) \de v_i$ and 

\vskip 2mm
$\bullet$ $V_i^r:= \mbox{the vertex set of} \hskip 2mm (\cup_{t=i+1}^{m} A_t)\cup(\cup_{t=i+1}^{m-1} B_t) \de v_{i+1}$. 

\vskip 2mm

\begin{lemma}\label{lem:proper packing}
There is a maximum cycle packing $y$ in $(T, w)$ that satisfies the following equalities for all $1\le i\le m-1$:
\begin{itemize}
\vspace{-1.5mm}
\item [(i)] $y_i^1=\min\{w(a_iv_i),\ \sum_{u \in V_i^{\ell}} \, w(ua_i)\}$;
\vspace{-1.5mm}
\item [(ii)] $y_i^2=\min\{w(b_iv_i),\ \sum_{u\in V_i^{\ell}} \, w(ub_i)\}$;
\vspace{-1.5mm}
\item [(iii)] $y_i^3=\min\{w(a_ib_i),\ \sum_{u\in V_i^{\ell}} \, w(ua_i)-y_i^1,\ w(b_iv_i)-y_i^2\}$;
\vspace{-1.5mm}
\item[(iv)] $y_i^4=\min\{w(v_{i+1}b_i),\ \sum_{u \in V_i^r} \, w(b_iu)\}$;
\vspace{-1.5mm}
\item [(v)] $y_i^5=\min\{w(v_{i+1}a_i),\ \sum_{u\in V_i^r} \, w(a_i u)\}$; and 
\vspace{-1.5mm}
\item [(vi)] $y_i^6=\min\{w(a_ib_i)-y_i^3,\ \sum_{u\in V_i^r} \, w(b_iu)- y_i^4,\ w(v_{i+1}a_i)-y_i^5\}$.
\end{itemize}
\end{lemma}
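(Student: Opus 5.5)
Among all maximum cycle packings in $(T,w)$, choose $y$ by a lexicographic extremal rule. Scan the horizontal blocks $i=1,\dots,m-1$ in order, and within each block the six families in the order $\mathcal{C}_i^1,\mathcal{C}_i^2,\mathcal{C}_i^4,\mathcal{C}_i^5,\mathcal{C}_i^3,\mathcal{C}_i^6$. Let $y$ maximize the vector of the quantities $y_i^j$ lexicographically. Each right-hand side is an upper bound on $y_i^j$ for any packing, so it suffices to show that a strict inequality contradicts the choice of $y$.

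For (i), every cycle of $\mathcal{C}_i^1$ uses $a_iv_i$. Its lower segment enters $a_i$ from some $u\in V_i^{\ell}$, since by Lemma \ref{lem:obs on cycles}(iv) it contains no other vertex of $B_i$ before $a_i$. Hence $y_i^1\le w(a_iv_i)$ and $y_i^1\le\sum_{u\in V_i^{\ell}}w(ua_i)$. The same counting gives the other five bounds. For (iii), cycles in $\mathcal{C}_i^3$ use $a_ib_i$, use $b_iv_i$ (which is shared with $\mathcal{C}_i^2$), and enter $a_i$ from $V_i^{\ell}$ (shared with $\mathcal{C}_i^1$). For (vi), the same reasoning applies on the right side, with $\mathcal{C}_i^3$ also competing for $a_ib_i$.

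Suppose the first failure is $y_i^1<\min\{w(a_iv_i),\sum_{u}w(ua_i)\}$. By Lemma \ref{lem: saturated arcs}, every arc is saturated by $y$. So $a_iv_i$ lies in some positive cycle $C_1\notin\mathcal{C}_i^1$, and some arc $ua_i$ with $u\in V_i^{\ell}$ lies in a positive cycle $C_2$ whose lower segment does not end with $a_iv_i$.

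The structure of $B_i$ (Figure 3) restricts how $C_1$ can use $a_iv_i$. Either $C_1$ reaches $a_i$ through $v_{i+1}a_i$, so $a_iv_i$ lies on its upper segment, or $C_1$ reaches $a_i$ from $V_i^{\ell}$ through $v_i$ in a way placing it in another family. Meanwhile $C_2$ continues from $a_i$ to $b_i$ or to $V_i^r$. Swap the tails: $C_2[\cdot,a_i]$ followed by $a_iv_i$ closes into a cycle $C_1'$ together with a $v_i$-to-$u$ route taken from $C_2$, which by Lemma \ref{lem:obs on cycles}(ii),(iii) passes through the appropriate join vertices. The remaining pieces $C_1[\cdot,a_i]\cup C_2[a_i,\cdot]$ contain a closed walk, hence a cycle $C_2'$. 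Decrease $y(C_1),y(C_2)$ by one and increase $y(C_1'),y(C_2')$ by one. Arc loads do not increase, the size is unchanged, and $y_i^1$ rises by one. The exchange touches only arcs of $B_i$ and $a_i$'s neighbors, and it also routes through $v_i$. We must check that it does not decrease any earlier coordinate $y_{i'}^{j'}$ with $i'<i$. This holds because the lower segments of the new cycles agree with old ones to the left of $B_i$, possibly up to rerouting within $B_{i-1}$. If need be, we instead use the maximum total of those earlier coordinates.

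The same tail-swapping argument handles (ii), (iv) and (v), with the roles of $a_i,b_i$ and $v_i,v_{i+1}$ exchanged accordingly. For (iii), a deficit means that $a_ib_i$, a remaining unit of some $ua_i$, and a remaining unit of $b_iv_i$ are all used by cycles outside $\mathcal{C}_i^3$ beyond the prescribed amounts. Recombining three cycles, as in Case 3 of Lemma \ref{lem:packtriangle}, yields a packing of at least the same size with larger $y_i^3$ and unchanged $y_i^1,y_i^2$. If this recombination produces more cycles, it contradicts maximality instead. Case (vi) is symmetric.

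The main obstacle is the bookkeeping in these exchanges. In each configuration of how the competing cycles $C_1,C_2$ (and $C_3$) traverse $B_i$, we must verify two things. First, the recombined arc sets really contain cycles of the required families. Second, the exchange does not lower an earlier coordinate. I would handle this by enumerating the few ways a cycle can meet the four-vertex block $B_i$, using Lemma \ref{lem:obs on cycles}(iv).
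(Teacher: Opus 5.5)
Your framework matches the paper's. Among maximum packings you take one that is lexicographically maximal in the $y_i^j$, look at the first violated equality, and uncross two positive cycles supplied by Lemma~\ref{lem: saturated arcs}. Your exchange for (i), $C_1'=C_2[v_i,a_i]\cup\{a_iv_i\}$ together with a cycle in $C_1[v_i,a_i]\cup C_2[a_i,v_i]$, is exactly the paper's Case~1. Ordering the coordinates as $1,2,4,5,3,6$ is harmless.

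\textbf{The gap: (iii) and (vi).} There is no triangle here, so Case~3 of Lemma~\ref{lem:packtriangle} is not a usable template. A recombination that only keeps the size equal would also oblige you, in your own ordering, to show that $y_i^4$, $y_i^5$ and all $y_p^q$ with $p<i$ are unchanged, which you do not address. The missing idea is structural and needs only two cycles.
\begin{itemize}
\item A positive $C_1\notin\mathcal{C}_i^3$ through $a_ib_i$ must enter $a_i$ from $v_{i+1}$. Entering from $V_i^{\ell}$ would, by Lemma~\ref{lem:obs on cycles}(iv), force $b_iv_i$ next and put $C_1$ in $\mathcal{C}_i^3$.
\item A positive $C_2\notin\mathcal{C}_i^1\cup\mathcal{C}_i^3$ through some $ua_i$ with $u\in V_i^{\ell}$ must leave $a_i$ into $V_i^r$. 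Hence it returns to the left through $v_{i+1}b_iv_i$.
\end{itemize}
Then $C_1[b_i,v_{i+1}]\cup\{v_{i+1}b_i\}$, $C_2[b_i,a_i]\cup\{a_ib_i\}$ and $C_2[a_i,v_{i+1}]\cup\{v_{i+1}a_i\}$ are three cycles using exactly the arcs of $C_1$ and $C_2$. The packing grows strictly, so (iii) holds in every maximum packing and no lexicographic bookkeeping is needed. Case (vi) is the mirror image: $C_1$ contains $v_{i+1}a_ib_iv_i$ in its upper segment, and $C_2$ enters $b_i$ from $V_i^{\ell}$ and returns via $v_{i+1}a_iv_i$.

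\textbf{Smaller points in (i).} Earlier coordinates are preserved exactly, not merely ``up to rerouting within $B_{i-1}$''. Take $C_2'$ to be the cycle through $C_1[v_i,v_t]$, where $v_t$ is the smallest join vertex of $C_1$. Then each new cycle agrees with one old cycle from $v_i$ until it leaves $V_i^{\ell}\cup\{v_i\}$. So every lower segment keeps its starting arcs, and since all four cycles contain $v_i$, no lower segment ends at a $v_p$ with $p<i$. Your fallback of maximizing ``the total of those earlier coordinates'' is not a legitimate mid-proof change of extremal rule and should be dropped. Your alternative that $C_1$ reaches $a_i$ from $V_i^{\ell}$ is impossible, since such a cycle lies in $\mathcal{C}_i^1$.

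\textbf{(iv) and (v) do not follow by simply exchanging roles.} The lexicographic order is not left--right symmetric. For (iv), the mirrored swap $C_1'=C_2[b_i,v_{i+1}]\cup\{v_{i+1}b_i\}$ leaves the closed walk $C_2[v_{i+1},b_i]\cup C_1[b_i,v_{i+1}]$. This changes exactly the left portions where the lower segments relevant to earlier coordinates begin.

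You need the extra observation that whenever this walk is not a simple cycle, it splits into two cycles and the packing grows. This happens, for instance, when $C_2$ enters $b_i$ from $V_i^{\ell}$ (the walk then visits $v_i$ twice), or when the lower segment of $C_1$ passes through $a_i$. In the one remaining configuration of (iv), $C_2\in\mathcal{C}_i^6$, and the new cycle has the same lower segment as $C_1$. In (v), every configuration leads to growth in this way.
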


{\bf Proof.} Let $y$ be a maximum cycle packing in $(T, w)$ such that 

\vskip 2mm
(1) the tuple $(y_1^1, y_1^2, y_1^3, y_1^4, y_1^5, y_1^6, y_2^1, y_2^2, y_2^3, y_2^4, y_2^5, y_2^6, \ldots, y_{m-1}^1, y_{m-1}^2, y_{m-1}^3, y_{m-1}^4, y_{m-1}^5, y_{m-1}^6)$
\vskip 2mm

\noindent is maximum in lexicographic order.

Let us show that $y$ is as desired. Clearly, $y_i^j$ is bounded above by the right hand side as specified in the lemma for all 
$1\le i\le m-1$ and $1\le j\le 6$. Assume on the contrary that 

\vskip 1mm
(2) the strict inequality holds for some $y_i^j$; subject to this, $(i,j)$ is lexicographically minimum. 

\vskip 1mm
Depending on the value of $j$, we proceed by considering six cases, and shall make extensive use of cross-free techniques in our proof.

\vskip 1mm
{\bf Case 1}. $j=1$; that is, $y_i^1<\min\{w(a_iv_i),\ \sum_{u \in V_i^{\ell}} \, w(ua_i)\}$.

\vskip 1mm

In this case, Lemma \ref{lem: saturated arcs} guarantees the existence of two cycles $C_1$ and $C_2$ in $\mathcal{C}\de\mathcal{C}_i^1$, such
that 

$\bullet$ $y(C_1)>0$, $a_iv_i\in U(C_1)$, and

$\bullet$ $y(C_2)>0$, $a_iv_i\notin C_2$, $ua_i\in C_2$ for some $u\in V_i^{\ell}$.   

\noindent By Lemma \ref{lem:obs on cycles}(i) and (ii), $C_2$ must pass through $v_i$, and all vertices on $C_2(v_i, u]$ are contained in $V_i^{\ell}$.
Let $C_1'$ be the cycle $C_2[v_i,a_i]\cup\{a_iv_i\}$. Clearly, $C_1' \in \mathcal{C}_i^1$. Let $v_t$ be the join vertex contained in $C_1$ with
the smallest subscript, and let $C_2'$ be a cycle in $C_1[v_i, a_i]\cup C_2[a_i,v_i]$ that contains $C[v_i, v_t]$. Let $\bar{y}$ be the maximum cycle packing
obtained from $y$ by decreasing both $y(C_1)$ and $y(C_2)$ by $1$ and increasing both $y(C_1')$ and $y(C_2')$ by $1$. Then $\bar{y}_p^q = y_p^q$
for all $(p,q)$ lexicographically smaller than $(i,1)$, while $\bar{y}_i^1 = y_i^1+1$. Thus the assumption (1) on $y$ is violated by $\bar{y}$.

\vskip 1mm
{\bf Case 2}. $j=2$; that is, $y_i^2<\min\{w(b_iv_i),\ \sum_{u\in V_i^{\ell}} \, w(ub_i)\}$.

\vskip 1mm

In this case, Lemma \ref{lem: saturated arcs} guarantees the existence of two cycles $C_1$ and $C_2$ in $\mathcal{C}\de\mathcal{C}_i^2$, such
that 

$\bullet$ $y(C_1)>0$, $b_iv_i\in U(C_1)$, and

$\bullet$ $y(C_2)>0$, $b_iv_i\notin C_2$, $ub_i\in C_2$ for some $u\in V_i^{\ell}$.   

\noindent By Theorem \ref{structure}, we have $a_iv_i \in C_2$. By Lemma \ref{lem:obs on cycles}(i) and (ii), $C_2$ must
pass through $v_i$, and all vertices on $C_2(v_i, u]$ are contained in $V_i^{\ell}$. Let $C_1'$ be the cycle $C_2[v_i,b_i]\cup\{b_iv_i\}$. 
Clearly, $C_1' \in \mathcal{C}_i^2$. Let $v_t$ be the join vertex contained in $C_1$ with the smallest subscript, and let $C_2'$ be a cycle in 
$C_1[v_i, b_i]\cup C_2[b_i,v_i]$ that contains $C[v_i, v_t]$. Let $\bar{y}$ be the maximum cycle packing
obtained from $y$ by decreasing both $y(C_1)$ and $y(C_2)$ by $1$ and increasing both $y(C_1')$ and $y(C_2')$ by $1$. Then $\bar{y}_p^q = y_p^q$
for all $(p,q)$ lexicographically smaller than $(i,2)$, while $\bar{y}_i^2 = y_i^2+1$. Thus the assumption (1) on $y$ is violated by $\bar{y}$.

\vskip 1mm
{\bf Case 3}. $j=3$; that is, $y_i^3<\min\{w(a_ib_i),\ \sum_{u\in V_i^{\ell}} \, w(ua_i)-y_i^1,\ w(b_iv_i)-y_i^2\}$.
\vskip 1mm

In this case, Lemma \ref{lem: saturated arcs} guarantees the existence of two cycles $C_1 \in \mathcal{C}\de \mathcal{C}_i^3$ and $C_2 \in 
\mathcal{C}\de (\mathcal{C}_i^1 \cup \mathcal{C}_i^3)$, such that 

$\bullet$ $y(C_1)>0$, $a_ib_i\in C_1$, and

$\bullet$ $y(C_2)>0$, $ua_i\in C_2$ for some $u\in V_i^{\ell}$.   

\noindent  Since $C_1\notin \mathcal{C}_i^3$, it must pass through $v_{i+1}$ (and hence $v_{i+1}a_i$) by Lemma \ref{lem:obs on cycles}(i) and (ii).
Since $C_2\notin \mathcal{C}_i^1\cup\mathcal{C}_i^3$, it contains neither $a_iv_i$ nor $a_ib_i$, which 
implies that $a_iv\in C_2$ for some $v\in V_i^r$.  Hence $C_2$ also passes through $v_{i+1}$ and therefore traverses the path $v_{i+1}b_iv_i$. Let $C_1'$ 
be the cycle $C_1[b_i, v_{i+1}] \cup \{v_{i+1}b_i\}$, let $C_2'$ be the cycle $C_2[b_i, a_i] \cup\{a_ib_i\}$, and let $C_3'$ be the cycle $C_2[a_i,v_{i+1}] 
\cup\{v_{i+1}a_i\}$. Let $\bar{y}$ be obtained from $y$ by decreasing both $y(C_1)$ and $y(C_2)$ by $1$ and increasing all of $y(C_1'), y(C_2')$ and 
$y(C_3')$ by $1$. Then $\bar{y}$ would be a cycle packing with size greater than that of $y$, contradicting the maximality assumption on $y$.   
 
\vskip 1mm
{\bf Case 4}. $j=4$; that is, $y_i^4<\min\{w(v_{i+1}b_i),\ \sum_{u \in V_i^r} \, w(b_iu)\}$.
\vskip 1mm

This case is symmetric to Case 1, so the proof proceeds along the same lines.
 
\vskip 1mm
{\bf Case 5}. $j=5$; that is, $y_i^5<\min\{w(v_{i+1}a_i),\ \sum_{u\in V_i^r} \, w(a_i u)\}$.
\vskip 1mm

This case is symmetric to Case 2, so the proof proceeds along the same lines.

\vskip 1mm
{\bf Case 6}. $j=6$; that is, $y_i^6<\min\{w(a_ib_i)-y_i^3,\ \sum_{u\in V_i^r} \, w(b_iu)- y_i^4,\ w(v_{i+1}a_i)-y_i^5\}$.
\vskip 1mm

In this case, Lemma \ref{lem: saturated arcs} guarantees the existence of two cycles $C_1 \in \mathcal{C}\de (\mathcal{C}_i^3 \cup \mathcal{C}_i^6)$ 
and $C_2 \in \mathcal{C}\de (\mathcal{C}_i^4 \cup \mathcal{C}_i^6)$, such that 

$\bullet$ $y(C_1)>0$, $a_ib_i\in C_1$, and

$\bullet$ $y(C_2)>0$, $b_iu\in C_2$ for some $u\in V_i^r$.   

\noindent By Lemma \ref{lem:obs on cycles}(iv), $a_ib_i\in U(C_1)$, so $C_1$ contains the path $v_{i+1}a_ib_iv_i$. Since $C_2\notin \mathcal{C}_i^4
\cup\mathcal{C}_i^6$, it contains neither $v_{i+1}b_i$ nor $a_ib_i$. Hence $vb_i \in C_2$ for some $v\in V_i^{\ell}$. Therefore $C_2$ must pass through both 
$v_i$ and $v_{i+1}$. Let $C_1'$ be the cycle $C_1[v_i, v_{i+1}] \cup\{v_{i+1}a_i, a_iv_i\}$, let $C_2'$ be the cycle $C_2[v_i,b_i] \cup \{b_iv_i\}$, and
let $C_3'$ be the cycle $C_2[b_i, v_{i+1}] \cup \{v_{i+1}a_i,a_ib_i\}$. Let $\bar{y}$ be obtained from $y$ by decreasing both $y(C_1)$ and $y(C_2)$ by $1$ 
and increasing all of $y(C_1'), y(C_2')$ and $y(C_3')$ by $1$. Then $\bar{y}$ would be a cycle packing with size greater than that of $y$, contradicting 
the maximality assumption on $y$.   \qed\\
 
For each vertex $v$ of $T$, let $\delta^-(v)$ (resp. $\delta^+(v)$) be the set of all arcs entering (resp. leaving) $v$. For each
join vertex $v_i$ of $T$, set $Z_ i^-:= \delta^-(v_i)\de \{a_iv_i, b_iv_i\}$ and $Z_ i^+:= \delta^+(v_i)\de \{v_ia_{i-1}, v_ib_{i-1}\}$. Note that 
$Z_ 1^+= \delta^+(v_1)$ and $Z_ m^-= \delta^-(v_m)$. Let $\mathcal{C}_i^-$ (resp. $\mathcal{C}_i^+$) be the set of cycles in $T$ whose lower 
segments end (resp. start) at $v_i$. 

\begin{lemma}\label{zang1}
Let $y$ be a maximum cycle packing in $(T, w)$ as specified in Lemma \ref{lem:proper packing}. Then the following statements hold
for all $1\le i\le m$:
\begin{itemize}
\vspace{-1mm}
\item[(i)] $\sum_{C\in\mathcal{C}_i^-} y(C)= w(Z_i^-) + \sum_{j=1}^3y_i^j$ and
\vspace{-2mm}
\item[(ii)] $\sum_{C\in\mathcal{C}_i^+} y(C)= w(Z_i^+) + \sum_{j=4}^6y_{i-1}^j$,
\vspace{-1mm}
\end{itemize}
\noindent where $y_0^j=0$ for $1\le j \le 6$.
\end{lemma}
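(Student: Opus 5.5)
We prove (i) by counting, cycle by cycle, the arcs entering $v_i$ on lower segments; (ii) is symmetric. Every cycle $C\in\mathcal{C}_i^-$ has its lower segment ending at $v_i$, so $L(C)$ enters $v_i$ through exactly one arc of $\delta^-(v_i)$. Conversely, take any cycle $C$ and any arc $e\in\delta^-(v_i)\cap C$. We check whether $e$ lies on $L(C)$ or on $U(C)$.

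The only arcs entering $v_i$ that can lie on an upper segment are arcs of $B_i$ coming from the right, namely $a_iv_i$, $b_iv_i$ (and $v_{i+1}v_i$, which does not exist after Lemma \ref{preprocessing}(i)). This follows from Lemma \ref{lem:obs on cycles}(iii): an upper segment $C[v_j,v_{j'}]$ with $j>j'$ stays in the horizontal blocks and travels leftward. Consequently every arc of $Z_i^-$ lying on $C$ is the last arc of $L(C)$, with $L(C)$ ending at $v_i$. Indeed, if $C$ used an arc $e\in Z_i^-$ on its upper segment, then $e$ would come from $B_i$'s right vertices, which is impossible for $e\in Z_i^-$. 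Also, if $v_i$ is not the largest-index join vertex of $C$, the arc of $C$ entering $v_i$ comes from the upper segment and hence lies in $\{a_iv_i,b_iv_i\}$. Hence the cycles of $\mathcal{C}_i^-$ are partitioned as follows: those whose lower segment enters $v_i$ through an arc of $Z_i^-$, and those entering through $a_iv_i$ or $b_iv_i$. The second group further splits into $\mathcal{C}_i^1\cup\mathcal{C}_i^2\cup\mathcal{C}_i^3$. Here we use that a lower segment entering via $b_iv_i$ either contains $a_ib_i$ or not, and these classes are disjoint. For $i=m$ the second group is empty.

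Next we apply Lemma \ref{lem: saturated arcs}. Each $e\in Z_i^-$ satisfies $\sum_{e\in C}y(C)=w(e)$, and by the above every such cycle lies in $\mathcal{C}_i^-$ and uses exactly one arc of $\delta^-(v_i)$. Summing gives
\[\sum_{C\in\mathcal{C}_i^-}y(C)=\sum_{e\in Z_i^-}\ \sum_{e\in C}y(C)+\sum_{j=1}^3 y_i^j=w(Z_i^-)+\sum_{j=1}^3y_i^j.\]
Part (ii) follows identically with arcs leaving $v_i$. Here $Z_i^+$ excludes $v_ia_{i-1},v_ib_{i-1}$, the only out-arcs of $v_i$ that can lie on an upper segment, and the cycles whose lower segment starts with $v_ia_{i-1}$ or $v_ib_{i-1}$ are exactly $\mathcal{C}_{i-1}^4\cup\mathcal{C}_{i-1}^5\cup\mathcal{C}_{i-1}^6$.

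The main obstacle is the structural claim that an arc of $Z_i^-$ can never lie on an upper segment and never on a lower segment ending elsewhere. This needs care with the arc $v_iv_{i+1}$ and its mirror: $v_{i-1}v_i\in Z_i^-$ lies on a lower segment, which must end at $v_i$ because the segment passes no other join vertex after it. We also need arcs from $A_i\setminus v_i$ into $v_i$. A cycle through such an arc but confined to $A_i$ is impossible in $\mathcal{G}^*$, so the cycle's extreme join vertex analysis still applies. The equalities in Lemma \ref{lem:proper packing} are actually not needed beyond the saturation of Lemma \ref{lem: saturated arcs}.
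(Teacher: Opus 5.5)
Your proposal is correct and follows essentially the same route as the paper. The paper likewise notes that each cycle of $\mathcal{C}_i^-$ uses exactly one arc of $\delta^-(v_i)$ and that arcs of $Z_i^-$ lie only on cycles of $\mathcal{C}_i^-$, then concludes from Lemma~\ref{lem: saturated arcs} and the definition of the $y_i^j$; you additionally spell out, via Lemma~\ref{lem:obs on cycles}(iii), why an upper segment can enter $v_i$ only through $a_iv_i$ or $b_iv_i$ (and you are right that only saturation, not the specific equalities of Lemma~\ref{lem:proper packing}, is needed).
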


\vspace{-1mm}

{\bf Proof.} Note that each cycle in $\mathcal{C}_i^-$ contains an arc in $\delta^-(v_i)$ and that arcs in $Z_i^-$ 
are contained only in cycles in $\mathcal{C}_i^-$. By Lemma \ref{lem: saturated arcs} and the definition of $y_i^j$, we obtain (i).
Similarly, (ii) holds. \qed\\

Now we are ready to present a combinatorial polynomial-time algorithm for finding a maximum cycle packing in $(T, w)$.
\vskip 2mm

{\bf Proof of Theorem \ref{CP}.} Let $y$ be a maximum cycle packing in $(T, w)$ as specified in Lemma \ref{lem:proper packing}. 
Although we have not yet given any algorithm for finding $y$, the values of $y_i^1, y_i^2, y_i^3, y_i^4, y_i^5, y_i^6$ have been determined exactly for all 
$1\le i \le m-1$, which allow us to devise an efficient algorithm for finding a maximum lower segment packing in $(T,w)$. 

Let $(N, \Lambda, \Pi, c)$ be the corresponding network of $(T,w)$, and let $c'$ be the following new capacity function defined on
$N$:  

\vskip 1mm
$\bullet$ $c'(a_{i1}t_i):=y_i^1$, 

\vskip 1mm
$\bullet$ $c'(b_{i1}t_i):=y_i^2+y_i^3$,

\vskip 1mm
$\bullet$ $c'(a_{i1}b_{i1}):=y_i^3$,

\vskip 1mm
$\bullet$ $c'(s_{i+1}b_{i2}):=y_i^4$,

\vskip 1mm
$\bullet$ $c'(s_{i+1}a_{i2}):=y_i^5+y_i^6$, and

\vskip 1mm
$\bullet$ $c'(a_{i2}b_{i2}):=y_i^6$
\vskip 1mm

\noindent for all $1\le i \le m-1$, and $c'(a)=c(a)$ for any other arc $a$. We use ${\cal P}$ to denote the set of all simple paths in $N$ 
from $\Lambda$ to $\Pi$.

\vskip 1mm
(1) The maximum value of a $(\Lambda, \Pi)$-flow in $(N, c')$ is at least $\sum_{C\in\mathcal{C}} y(C)$.

\vskip 1mm

To justify this, let $Q_C$ denote the lower segment of a cycle $C$ in $T$. By Lemma \ref{lem:btw paths}, there exists an path $P_C$ from $\Lambda$ to $\Pi$ 
in $N$, such that the arcs in $\{\eta(a): a\in P_C \cap A' \}$ form $Q_C$ (recall the construction of $N$). Define $g(P):=y(C)$ if 
$P=P_C$ for a cycle $C$ in $T$ and $g(P):=0$ for all other paths $P$ in ${\cal P}$. Clearly,  $\sum_{e \in P \in {\cal P}}\, g(P) \le  c(e)$ for all 
arcs $e$ of $N$. So $g$ is a $(\Lambda, \Pi)$-flow (in the path packing form) with value $\sum_{P \in {\cal P}}\, g(P)=\sum_{C\in\mathcal{C}} y(C)$. 
This proves (1).

\vskip 1mm

Throughout the remainder of this proof, let $\beta(v)$ denote the set of all arcs incident with a vertex $v$ in $N$. For each vertex subset $U$ of $N$, set 
$\beta(U):=\cup_{v\in U} \beta(v)$.  

\vskip 1mm

(2) $c'(\beta(t_i))=\sum_{C\in\mathcal{C}_i^-} y(C)$ and $c'(\beta(s_i))=\sum_{C\in\mathcal{C}_i^+} y(C)$ for $1\le i \le m$.

Recall the construction of $N$, each arc $e$ in $Z_i^-$ corresponds to a unique arc $e'$ in $N$ and $c(e')=w(e)$. Set $Y_i:=\{e': e\in Z_i^-\}$. 
Then $\beta(t_i)=Y_i \cup \{a_{i1}t_i, b_{i1}t_i\}$, which is the set of all arcs entering $t_i$ in $N$. By Lemma \ref{lem:proper packing} and 
the definition of $c'$, we have $c'(\beta(t_i))= c'(Y_i)+c'(a_{i1}t_i)+ c'(b_{i1}t_i)=c(Z_i^-)+\sum_{j=1}^3y_i^j=w(Z_i^-) + \sum_{j=1}^3y_i^j$. So 
$c'(\beta(t_i))=\sum_{C\in\mathcal{C}_i^-} y(C)$ by Lemma \ref{zang1}(i). Similarly, we can prove that  $c'(\beta(s_i))=\sum_{C\in\mathcal{C}_i^+} 
y(C)$. So (2) is justified.

\vskip 1mm
(3) The minimum capacity of a $(\Lambda, \Pi)$-cut in $(N, c')$ is at most $\sum_{C\in\mathcal{C}} y(C)$, which is the capacity of both $\beta(\Lambda)$
and $\beta(\Pi)$. 

\vskip 1mm
Clearly, $\beta(\Pi)$ is a $(\Lambda, \Pi)$-cut in $N$, with capacity $\sum_{i=1}^{m-1} c'(\beta(t_i)) =\sum_{i=1}^{m-1} \sum_{C\in\mathcal{C}_i^-} 
y(C)=\sum_{C\in\mathcal{C}} y(C)$, where the first equality follows from (2). Similarly, $\beta(\Lambda)$ is a $(\Lambda, \Pi)$-cut in $N$, with 
capacity $\sum_{C\in\mathcal{C}} y(C)$. This establishes (3).

\vskip 1mm

From the max-flow min-cut theorem, we deduce that

\vskip 1mm
(4) both inequalities described in (1) and (3) are attained with equality. Furthermore, both $\beta(\Lambda)$ and $\beta(\Pi)$ are minimum 
$(\Lambda, \Pi)$-cuts in $(N, c')$ with capacity $\sum_{C\in\mathcal{C}} y(C)$. (So every maximum $(\Lambda, \Pi)$-flow in $(N, c')$ saturates 
every arc leaving $\Lambda$ or entering $\Pi$.) 

\vskip 1mm
(5) There is a maximum $(\Lambda, \Pi)$-flow $g: {\cal P} \rightarrow {\mathbb Z}_+$ (in the path packing form) in $(N, c')$,  such that, 
for any $P$ with $g(P)>0$ and $1\le i \le m-1$, if $a_{i1}b_{i1} \in P$, then $b_{i1}t_i \in P$; if $a_{i2}b_{i2} \in P$, then $s_{i+1}a_{i2} \in P$. 

\vskip 1mm
To justify this, let $f$ be an integral maximum $(\Lambda, \Pi)$-flow in $(N, c')$. By (4), $f$ saturates every arc leaving $\Lambda$ or entering 
$\Pi$. Using breadth-first search (see Tarjan \cite{tarjan} and Cormen et al. \cite{CL}) $O(n^2)$ times, we can construct a collection ${\cal R}$ of $O(n^2)$ 
directed paths $R$ in $N$ from $\Lambda$ to $\Pi$ together with integral multiplicities $h(R)>0$, such that  

\vskip 1mm
$\bullet$ $\sum_{a \in R \in {\cal R}}\, h(R) \le f(a)$ for each arc $a$ of $N$;
 
\vskip 1mm
$\bullet$ $\sum_{a \in R \in {\cal R}}\, h(R) = f(a)$ for each arc $a$ leaving $\Lambda$ or entering $\Pi$.
\vskip 1mm

The construction proceeds as follows: Find a directed path $R$ from $\Lambda$ to $\Pi$ in $N$ such that $f(a)>0$ for each arc $a$ on $R$ using breadth-first 
search, which runs in $O(n^2)$ time.  Define $h(R):=\alpha$, where $\alpha=\min \{f(a): a \in R\}$. Replacing $f(a)$ by $f(a)-\alpha$ for 
each arc $a$ on $R$, and repeat the process until ${\rm val}(f)=0$. This step requires $O(n^4)$ time. 

Suppose there is a path $R$ in ${\cal R}$ with $h(R)>0$ such that $a_{i1}b_{i1} \in R$ but $b_{i1}t_i \notin R$ or such that $a_{i2}b_{i2} \in R$ but $s_{i+1}a_{i2} 
\notin R$, for some $i$; say the former. Then $R$ traverses the arc $b_{i1}b_{i2}$.  Since $c'(b_{i1}t_i)\ge c'(a_{i1}b_{i1})$, there is a path $L$ in ${\cal R}$ with $h(L)>0$ such 
that $b_{i1}t_i \in L$ but $a_{i1}b_{i1} \notin L$. Let $R_1$ (resp. $L_1$) be the subpath of $R$ (resp. $L$) from $\Lambda$ to $b_{i1}$ and let $R_2$ (resp. $L_2$)
be the subpath of $R$ (resp. $L$) from  $b_{i1}$ to $\Pi$. Note that $L_2$ consists of $b_{i1}t_i$ only. Set $P:=R_1 \cup L_2$ and $Q:=L_1 \cup R_2$.
Let $h'$ be obtained from $h$ by decreasing both $h(R)$ and $h(L)$ by $1$ and increasing both $h(P)$ and $h(Q)$ by $1$.
Let ${\cal R}'$ be obtained from ${\cal R}$ by adding $P$ and $Q$, if necessary. Replace $({\cal R}, h)$ by $({\cal R}', h')$ and repeat the process, 
we shall end up with a flow, denoted by $g$, in the path packing form as described in (5).  
 
\vskip 1mm
(6) We can find a $(\Lambda, \Pi)$-flow $g$ described in (5) in $O(n^4)$ time. Furthermore, the total number of paths $P$ with $g(P)>0$ is $O(n^2)$.

To justify this, let $N^*$ be obtained from $N$ by adding one arc parallel to $a_{i1}t_i$, denoted by $a_{i1}^*t_i^*$, and one arc 
parallel to $s_{i+1}b_{i2}$, denoted by $s_{i+1}^*b_{i2}^*$, for all $1\le i \le m-1$. Let $c^*$ be the following capacity function defined on
$N^*$:  

\vskip 1mm
$\bullet$ $c^*(a_{i1}^*t_i^*) := c'(a_{i1}b_{i1})=y_i^3$, 

\vskip 1mm

$\bullet$ $c^*(a_{i1}b_{i1}):=0$,

\vskip 1mm
$\bullet$ $c^*(b_{i1}t_i):= c'(b_{i1}t_i) - c'(a_{i1}b_{i1}) = y_i^2$,

\vskip 1mm
$\bullet$ $c^*(s_{i+1}^*b_{i2}^*): = c'(a_{i2}b_{i2})=y_i^6$,

\vskip 1mm
$\bullet$ $c^*(a_{i2}b_{i2}):=0$, and
\vskip 1mm

$\bullet$ $c^*(s_{i+1}a_{i2}):= c'(s_{i+1}a_{i2})- c'(a_{i2}b_{i2}) = y_i^5$

\vskip 1mm

\noindent for all $1\le i \le m-1$, and $c^*(a)=c'(a)$ for any other arc $a$. By (5), the maximum value of a $(\Lambda, \Pi)$-flow in $(N^*, c^*)$
is equal to that of a $(\Lambda, \Pi)$-flow in $(N, c')$. We can find a $(\Lambda, \Pi)$-flow $h$ in the path packing form in $(N^*, c^*)$ in
$O(n^4)$ time. Furthermore, the total number of paths $P$ with $h(P)>0$ is $O(n^2)$ (see the proof of (5) for details). Clearly, $h$ can be 
transformed into a $(\Lambda, \Pi)$-flow $g$ as described in (5) and (6) in $O(n^3)$ time. This proves (6).

By Lemma \ref{lem:btw paths}, for each $P\in\mathcal{P}$ with $g(P)>0$, arcs in $\{\eta(a): a\in P \cap A'\}$ form a path $P'$ in $\mathcal{Q}$. 
Then $g$ induces an assignment $h: \mathcal{Q}\rightarrow \mathbb{Z}_+$ by setting $h(Q):=g(P)$ if $Q=P'$ for some $P\in\mathcal{P}$ with $g(P)>0$ 
and setting $h(Q):=0$ otherwise. Clearly, we have

\vskip 1mm
(7) $\sum_{Q\in\mathcal{Q}} h(Q)=\sum_{P\in\mathcal{P}} g(P) = \sum_{C\in\mathcal{C}} y(C)$. 

\vskip 1mm
(8) Each $Q\in\mathcal{Q}$ with $h(Q)>0$ is a lower segment of some cycle in $T$.

Assume the contrary. Then $Q$ must contain some $a_kb_k$ by Lemma \ref{lem:obs on cycles}(iv). Let $P$ be the path in $\mathcal{P}$ with $g(P)>0$, such 
that arcs in $\{\eta(a): a\in P \cap A'\}$ form $Q$. By (5), either $\{a_{i1}b_{i1}, b_{i1}t_i\} \subseteq P$ or $\{a_{i2}b_{i2}, s_{i+1}a_{i2}\} \subseteq P$. 
So either $b_kv_k$ or $v_{k+1}a_k$ is on $Q$, which contradicts the fact that $Q$ is a path. 

For $1\le i \le m-1$, let ${\cal Q}_i$ consist of all paths in $\mathcal{Q}$ that start from some join vertex $v_p$ and end at some $v_q$ with $ p\le i \le q-1$, and 
let $\mathcal{O}_i$ be the set of all cycles in $\mathcal{C}$ whose lower segments start from some join vertex $v_p$ and end at some $v_q$ with $ p\le i \le q-1$.
  
\vskip 1mm	
(9) $\sum_{Q\in\mathcal{Q}_i} h(Q)= \sum_{C\in\mathcal{O}_i} y(C)$. 

\vskip 1mm

To justify this, let $\mathcal{Q}_i^-$ be the set of all paths in $\mathcal{Q}$ that end at some $v_p$ with $p \le i$, and let 
$\mathcal{Q}_{i+1}^+$ be the set of all paths in $\mathcal{Q}$ that start from some $v_q$ with $q \ge i+1$. Let $\mathcal{O}_i^-$ be the set of 
all cycles in $\mathcal{C}$ whose lower segments end at some $v_p$ with $p \le i$, and let $\mathcal{O}_{i+1}^+$ be the set of all cycles in 
$\mathcal{C}$ whose lower segments start from some $v_q$ with $q \ge i+1$. Moreover, let $\mathcal{P}_i^-$ be the set of all paths in $\mathcal{P}$ 
that end at some $t_p$ with $p \le i$, and let $\mathcal{P}_{i+1}^+$ be the set of all paths in $\mathcal{P}$ that start from some $s_q$ with 
$q \ge i+1$. Then $\sum_{Q\in\mathcal{Q}_i^-} h(Q) = \sum_{P\in\mathcal{P}_i^-} g(P) = \sum_{C\in\mathcal{O}_i^-} y(C)$
by (2) and (4). Similarly, $\sum_{Q\in\mathcal{Q}_{i+1}^+} h(Q) = \sum_{C\in\mathcal{O}_{i+1}^+} y(C)$. It follows from (7) that 
$\sum_{Q\in\mathcal{Q}_i} h(Q)=\sum_{Q\in\mathcal{Q}} h(Q)-\sum_{Q\in\mathcal{Q}_i^-} h(Q) - \sum_{Q\in\mathcal{Q}_{i+1}^+} h(Q) =
\sum_{C\in\mathcal{C}} y(C) - \sum_{C\in\mathcal{O}_i^-} y(C) - \sum_{C\in\mathcal{O}_{i+1}^+} y(C) = \sum_{C\in\mathcal{O}_i} y(C)$, as desired.

\vskip 1mm
It remains to complete $h$ into a maximum cycle packing by adding corresponding upper segments. For this purpose, let $D$ be the digraph obtained from
$\cup_{i=1}^{m-1} B_i$ by deleting $v_iv_{i+1}$ for all $1\le i \le m-1$. Define a capacity function $\bar{c}$ on $D$ as follows: 

\vskip 1mm
$\bullet$ $\bar{c}(a_iv_i):=c(a_iv_i)-c'(a_{i1}t_i)=c(a_iv_i)-y_i^1$,
\vskip 1mm

$\bullet$ $\bar{c}(b_iv_i):=c(b_iv_i)-c'(b_{i1}t_i)= c(b_iv_i)-y_i^2-y_i^3$,

\vskip 1mm
$\bullet$ $\bar{c}(a_ib_i):=c(a_ib_i)-c'(a_{i1}b_{i1})- c'(a_{i2}b_{i2}) =c(a_ib_i)-y_i^3-y_i^6$,

\vskip 1mm
$\bullet$ $\bar{c}(v_{i+1}b_i):=c(v_{i+1}b_i)- c'(s_{i+1}b_{i2}) = c(v_{i+1}b_i)-y_i^4$, and

\vskip 1mm
$\bullet$ $\bar{c}(v_{i+1}a_i):=c(v_{i+1}a_i)- c'(s_{i+1}a_{i2}) =c(v_{i+1}a_i)-y_i^5-y_i^6$

\vskip 1mm
\noindent for $1\le i \le m-1$.
\vskip 1mm

(10) The maximum value of a $v_{i+1}$-$v_i$ flow in $(D, \bar{c})$ is equal to $\sum_{C\in\mathcal{O}_i} y(C)$ for $1\le i\le m-1$.

\vskip 1mm
To justify this, note that the upper segment of each cycle in $\mathcal{O}_i$ contains a $v_{i+1}$-$v_i$ path in $D$ by Lemma \ref{lem:obs on cycles}(iii). 
On the other hand, arcs in $B_i\de\{v_iv_{i+1}\}$ can be contained only in cycles in $(\cup_{j=1}^6\mathcal{C}_i^j)\cup \mathcal{O}_i$. 
Thus (10) holds by Lemma \ref{lem: saturated arcs} and the definition of $\bar{c}$.

\vskip 1mm
Combining (9) and (10), we obtain

\vskip 1mm
(11) $\sum_{Q\in\mathcal{Q}_i} h(Q)$ is equal to the maximum value of a $v_{i+1}$-$v_i$ flow in $(D, \bar{c})$ for $1\le i\le m-1$. 

\vskip 1mm

Let $\mathcal{Q}^*=\{Q\in \mathcal{Q}: h(Q)>0$ and the ends of $Q$  are distinct$\}$. If $\mathcal{Q}^* \ne \emptyset$, take $Q$ from it. Suppose $Q$ is from 
$v_p$ to $v_q$, with $p < q$.  For each $p\le k \le q-1$, find a directed path $R_k$ from $v_{k+1}$ to $v_k$ in $B_k \de \{v_kv_{k+1}\}$
such that $\bar{c}(a)>0$ for each arc $a$ on $R_k$. Let $R$ be the concatenation of all these $R_k$, and let $\delta:=\min \{h(Q), \, \bar{c}(a): a \in R\}$.
Let $C$ be a cycle in $Q \cup R$ and define $y(C):=\delta$. Replace $\bar{c}(a)$ by $\bar{c}(a)-\delta$ for each arc $a$ in $Q \cup R$, and replace
$\mathcal{Q}^*$ by $\mathcal{Q}^* \de \{Q\}$ if $h(Q)=\delta$. Repeat the process until $\mathcal{Q}^*$ becomes empty. The correctness of this algorithm
is guaranteed by (11). Since originally $\mathcal{Q}^*$ has size $O(n^2)$ and $D$ has $O(n)$ arcs, the total number of cycles $C$ with $y(C)>0$ 
produced is $O(n^3)$. \qed

\section{General Algorithms}

We have come up with efficient algorithms for finding minimum FAS's and maximum cycle packings in arc-weighted tournaments in ${\cal G}^*$. 
The purpose of this section is to design more general algorithms for tournaments in ${\cal G}$.

As proved by Chen et al. \cite{CDZZ2}, both $F_1$ and $G_1$ (see Figure 2) are CM. Let us consider algorithmic problems on these two tournaments.

\begin{lemma}\label{lem:F1 and G1}
Suppose each arc of $T\in \{F_1, G_1\}$ is associated with a nonnegative integral weight $w(e)$. Then both a minimum FAS and a maximum cycle packing in $(T, w)$
can be found in constant time. 
\end{lemma}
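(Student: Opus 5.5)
The key observation is that $F_1$ and $G_1$ are fixed tournaments, each on a bounded number of vertices (at most six or seven, as drawn in Figure 2). Hence the number of arcs, the family $\mathcal{C}$ of all cycles, and the family of all inclusionwise minimal FAS's are all of constant size. I would exploit this finiteness directly rather than structurally. The only subtlety is that the weights $w(e)$ are arbitrary integers, so "constant time" has to mean a constant number of arithmetic operations on the weights, independent of their magnitude.

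For the minimum FAS, I would enumerate beforehand, once and for all, the constant-size list of all minimal FAS's of $T$. Given $w$, I compute $w(F)$ for each of them and return a minimizer. This uses a constant number of additions and comparisons and yields $\tau_w(T)$.

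For the maximum cycle packing, I would solve the integer program $\max\{y^T\mathbf 1: y\in\mathbb Z_+^{\mathcal C},\ y^TM\le w^T\}$, whose constraint matrix $M$ is now a fixed constant-size matrix. Since $T$ is CM (Chen et al.\ \cite{CDZZ2}), the linear relaxation has an integral optimum. Its value equals $\tau_w(T)$, which was already computed above.

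To find an integral optimal $y$ in time independent of $w$, I would use the fact that $M$ is fixed. The LP has constantly many bases, and for each basis $B$ the candidate solution $y_B=B^{-T}w_B$ is obtained by a fixed linear map applied to $w$. So I would enumerate all bases and all subsystems, compute the corresponding candidate vectors, and keep the best one that is feasible and integral.

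The CM/TDI property is needed here, because a basic optimal solution need not itself be integral. I would handle this by using the TDI property (Edmonds--Giles) of the system $Mx\ge\mathbf 1, x\ge0$ for these fixed tournaments. This yields a finite, $w$-independent collection of "integral decomposition patterns". Alternatively, I would run a greedy/peeling procedure: choose a cycle $C$ such that reducing $w$ by $\lambda=1$ on $C$ decreases $\tau_w$ by exactly one. Each such step is then batched into a single step with $\lambda$ as large as possible, where the maximal $\lambda$ is computed by comparing the constantly many linear functions $w(F)$.

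Each batched step either zeroes an arc or changes which minimal FAS is optimal. Since these events are finite in number for a fixed tournament, only a constant number of steps occur. The correctness of each step follows from $\nu_w=\tau_w$ for all $w$, i.e.\ from Theorem~\ref{Theorem1}. The main obstacle is exactly this bound: showing that the number of batched steps is bounded independently of $w$. I would try to prove it by showing that the set of arcs with positive residual weight, together with the set of optimal minimal FAS's, changes monotonically.
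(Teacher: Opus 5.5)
Your FAS part coincides with the paper's. Your peeling step is also exactly the paper's key step: accept $\lambda$ on a cycle $C$ iff $\tau_{w-\lambda\chi^C}(T)=\tau_w(T)-\lambda$, and find the largest such integer $\lambda\le\min_{a\in C}w(a)$ by comparing the constantly many linear functions $w(K_j)-\lambda|C\cap K_j|$ with $\tau_w(T)-\lambda$.

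The gap is the one you flag yourself: nothing in the proposal bounds the number of batched steps independently of $w$. A priori you only get at most $\tau_w(T)$ steps, so ``constant time'' is not established. Moreover, the potential you suggest does not work as stated. Let $\alpha_{\max}$ be the largest real $\alpha$ with $\min_j\{w(K_j)-\alpha|C\cap K_j|\}=\tau_w(T)-\alpha$. Suppose $\alpha_{\max}$ is fractional and $1\le\lfloor\alpha_{\max}\rfloor<\min_{a\in C}w(a)$. Then after subtracting $\lfloor\alpha_{\max}\rfloor\chi^C$ no arc becomes zero, and the set of optimal minimal FAS's is unchanged: those meeting $C$ in one arc keep their status, and those meeting $C$ in at least two arcs stay strictly non-optimal. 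So ``each batched step zeroes an arc or changes the optimal FAS set'' is not justified. The basis-enumeration and ``integral decomposition patterns'' alternatives do not fill the gap either. As you note, basic optimal solutions may be fractional, and TDI/Hilbert-basis arguments give existence of an integral optimum, not a $w$-independent procedure.

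The missing idea, which the paper uses, is to bound the number of rounds by the number $p$ of cycles of $T$. Fix an order $C_1,\dots,C_p$ of all cycles and process each exactly once: set $y(C_i):=\alpha_i^*$, the largest admissible multiplicity for $w_i$, and $w_{i+1}:=w_i-\alpha_i^*\chi^{C_i}$. Then $\nu_{w_{i+1}}(T)=\nu_{w_i}(T)-\alpha_i^*$, and $C_i$ lies in no maximum packing of $(T,w_{i+1})$, since otherwise $\alpha_i^*+1$ would also be admissible. This persists: for $k>i$, any maximum packing of $(T,w_k)$, together with the multiplicities $\alpha_{i+1}^*,\dots,\alpha_{k-1}^*$, is a maximum packing of $(T,w_{i+1})$, hence cannot use $C_i$. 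Taking $k=p+1$ shows $\nu_{w_{p+1}}(T)=0$, so $\sum_i\alpha_i^*=\nu_w(T)$ after $p=O(1)$ rounds of $O(1)$ arithmetic each. In your terminology, the correct monotone potential is the set of cycles occurring in some maximum packing of the residual weight. It never grows, and each batched step removes the cycle just processed.
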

\vspace{-1mm}

{\bf Proof.} Let $\tau_w(T)$ denote the minimum weight of an FAS in $(T, w)$ and let $\nu_w(T)$ denote the maximum size of a cycle packing in $(T, w)$. 
Since $T$ is CM, $\tau_w(T)=\nu_w(T)$ for all $w$; this min-max relation will play an important role in our algorithm design. 

Let $C_1, C_2, \ldots, C_p$ be all cycles in $T$, and let $K_1, K_2, \ldots, K_q$ be all inclusionwise minimal feedback arc sets in $T$ (see
\cite{CDZZ2} for details). Compute $w(K_i)$ for all $1\le i \le q$; we can thus obtain a minimum FAS in $(T, w)$ in constant time.  

To find a maximum cycle packing $y$ in $(T, w)$, we proceed by determining $y(C_1), y(C_2), \ldots, \\y(C_p)$ in order. 
Set $w_1:=w$. Suppose we have already obtained $y(C_1), y(C_2), \ldots, y(C_{i-1})$ and a weight function $w_i$ 
for some $i$ with $1\le i \le p$. Compute $\tau_{w_i}(T)$ using the above algorithm for the FAS problem. 
If $\tau_{w_i}(T)=0$, set $y(C_j):=0$ for all $j \ge i$, stop. Otherwise, find $\beta_{i,j}:=|C_i\cap K_j|$ for 
all $1\le j\le q$. Let $\alpha$ be an arbitrary integer between $0$ and $\min\{w_i(a): a\in C_i\}$, and let $c({\alpha})$ be the weight function 
obtained from $w_i$ by replacing $w_i(e)$ with $w_i(e)-\alpha$ for each arc $e$ on $C_i$.  Observe that

\vskip 1mm
(1) $\nu_{c(\alpha)} (T)= \tau_{c(\alpha)} (T) = \min\{w_i(K_j)-\alpha\beta_{i,j}: 1\le j\le q\}$. 
\vskip 1mm

\noindent Furthermore, $y(C_i)$ can be set to $\alpha$ if and only if $\nu_{c(\alpha)} (T) = \nu_{w_i}(T) - \alpha = \tau_{w_i}(T)-\alpha$; subject
to this, $\alpha$ is maximized. Combining this with (1), we obtain

\vskip 1mm
(2) $\tau_{w_i}(T)-\alpha = \min\{w_i(K_j)-\alpha\beta_{i,j}: 1\le j\le q\}$.
\vskip 1mm

Let $f(\alpha)= \tau_{w_i}(T)-\alpha$, let $g(\alpha)=\min\{w_i(K_j)-\alpha\beta_{i,j}: 1\le j\le q\}$, and view both of
them as functions defined on ${\mathbb R}$. Clearly, $f(0)=g(0)$ and $g(\alpha)$ is piecewise linear. So we can find all 
intersection points of $f(\alpha)$ and $g(\alpha)$ (see (2)) in constant time. Let $\alpha^*$ be the largest integral 
intersection point between $0$ and $\min\{w_i(a): a\in C_i\}$ and let $y(C_i):= \alpha^*$. Set $w_{i+1}:=c(\alpha^*)$ and $i:=i+1$, 
repeat the process. Clearly, the algorithm returns a maximum cycle packing in $(T, w)$ and runs in constant time. \qed\\

We can finally establish the main result of this paper.

\vskip 1mm

{\bf Proof of Theorem \ref{thm:main}}. Let $T=(V,A)$ be a M\"{o}bius-free tournament with a nonnegative integral weight $w(e)$ on each arc $e$.
If $T\in \{F_1, G_1\}$ (see Figure 2), then both a minimum FAS and a maximum cycle packing in $(T, w)$ can be found in constant time
by Lemma \ref{lem:F1 and G1}. So we assume that $T\notin \{F_1, G_1\}$. By Theorem \ref{structure}, $T \in \cal{G}$, the class of 
all tournaments $T$ as depicted in Figure 3, where $m\ge1$, undirected/dotted edges in the figure can be directed arbitrarily, and all other 
arcs (that are not drawn) are directed from ``left'' to ``right''. 

By Lemma \ref{thm:notriangle packing}, the maximum cycle packing problem on $(T, w)$ can be reduced to that on some $(T', w')$ in $O(n^3)$ time, 
where $T' \in \cal{G}^*$, the class of all tournaments in $\cal{G}$, in which $A_i\de v_i$ is acyclic for all vertical blocks $A_i$.  
By Theorem \ref{CP}, a maximum cycle packing in $(T', w')$, and hence in $(T, w)$, can be found in $O(n^7)$ time. 

Since $T$ is CM, $\tau_w(T)=\nu_w(T)$ for all $w$. Hence both of them can be found in $O(n^7)$ time by our algorithm for the 
cycle packing problem. 

To find a minimum FAS in $(T, w)$, let $e_1, e_2, \ldots, e_q$ be all arcs in $T$, where $q= {n \choose 2}$. Set $w_1:=w$ and $K_0:=\emptyset$.
Find $\tau_{w_1}(T)$ using our algorithm for the cycle packing problem. Suppose we have scanned arcs $e_1, e_2, \ldots e_{i-1}$ and obtained
the weight function $w_i$, $K_{i-1}$, and $\tau_{w_i}(T)$ for some $i$ with $1\le i \le q$. Let $c_i$ be the weight function obtained from
$w_i$ by replacing $w_i(e_i)$ with $0$ and find $\tau_{c_i}(T)$ using our algorithm for the cycle packing problem. Set $K_i:= K_{i-1} \cup
\{e_i\}$ if $\tau_{w_i}(T)= \tau_{c_i}(T)+w_i(e_i)$ and $K_i:= K_{i-1}$ otherwise. (Thus $e_i \in K_i$ if $w_i(e_i)=0$.) Set $w_{i+1}:=c_i$ 
if $e_i \in K_i$ and $w_{i+1}:=w_i$ otherwise. Set $i:=i+1$, repeat the process until $i=q$; return $K_q$ (which is a minimum FAS in $(T, w)$).     

To prove the correctness of our algorithm, let us establish the following statements simultaneously.

(1) $w_i(e)=0$ if $e \in K_{i-1}$ and $w_i(e)=w(e)$ if $e \notin K_{i-1}$.

(2) No arc $e_t \notin K_i$ with $t\le i$ is contained in a minimum FAS in $(T, w_i)$.

(3) There is a minimum FAS in $(T, w_i)$ that contains $K_i$.

We apply induction on $i$. The statements (1)-(3) hold trivially for $i=1$. Suppose they have been established for $i$. Let
us proceed to the step for $i+1$.

According to the algorithm,  $w_{i+1}$ is obtained from $w_i$ by replacing $w_i(e_i)$ with zero if $e_i \in K_i$ and $w_{i+1}=w_i$
otherwise. It follows instantly from the induction hypothesis that $w_{i+1}(e)=0$ if $e \in K_i$ and $w_{i+1}(w)=w(e)$ if $e \notin K_i$.
So (1) holds. 

If $e_{i+1} \notin K_{i+1}$, then $\tau_{w_{i+1}}(T)\ne \tau_{c_{i+1}}(T)+w_{i+1}(e_{i+1})$ by the algorithm. So $e_{i+1}$ is not contained 
in any minimum FAS in $(T, w_{i+1})$. Assume on the contrary that some arc $e_t \notin K_{i+1}$ with $t\le i$ is contained in a minimum FAS $J$ 
in $(T, w_{i+1})$. Observe that if $e_i\in K_i$, then $w_i(J \cup K_i)=w_{i+1}(J)+w_i(e_i)=\tau_{w_{i+1}}(T)+w_i(e_i)= \tau_{w_i}(T)$, where 
the first equality follows from (1). If $e_i \notin K_i$, then $w_i(J \cup K_i) = w_i(J \cup K_{i-1}) = w_{i+1}(J) =\tau_{w_{i+1}}(T) = \tau_{w_i}(T)$. 
In either case, $J \cup K_i$ would be a minimum FAS in $(T, w_i)$ that contains $e_t$, with $t \le i$, outside $K_{i+1}$ (and hence outside $K_i$);
this contradiction to induction hypothesis proves (2). 

If $e_{i+1} \in K_{i+1}$, then $\tau_{w_{i+1}}(T)= \tau_{c_{i+1}}(T)+w_{i+1}(e_{i+1})$ by the algorithm. So $e_{i+1}$ is contained in a 
minimum FAS in $(T, w_{i+1})$. By (1),  $w_{i+1}(e)=0$ if $e \in K_i$. It follows that there is a minimum FAS in $(T, w_{i+1})$ that contains 
$K_{i+1}$. Hence (3) is true.

By (2) and (3), $K_q$ is a minimum FAS in $(T, w_q)$. So $\tau_{w_q}(T)=w_q(K_q)$. Suppose we have proved that $\tau_{w_{i+1}}(T)=w_{i+1}(K_q)$
for some $i \le q-1$. If $w_{i+1}=c_i$, then $\tau_{w_i}(T)= \tau_{c_i}(T)+w_i(e_i)$ and $e_i \in K_i \subseteq K_q$, so $\tau_{w_{i}}(T)=
w_{i+1}(K_q)+ w_i(e_i)=w_i(K_q)$. If $w_{i+1}=w_i$, then $\tau_{w_{i}}(T)=\tau_{w_{i+1}}(T)=w_{i+1}(K_q)=w_i(K_q)$. In either case, 
$\tau_{w_i}(T)=w_q(K_i)$ for $q\ge i \ge 1$. Therefore $K_q$ is a minimum FAS in $(T, w_1)$ and hence in $(T, w)$. 

Clearly, our algorithm for the feedback arc set problem on $(T, w)$ runs in $O(n^9)$ time. \qed

\end{document}